\numberwithin{equation}{section}
\numberwithin{figure}{section}
\theoremstyle{plain}
\newtheorem{thm}{\protect\theoremname}[section]
\theoremstyle{remark}
\newtheorem{rem}[thm]{\protect\remarkname}
\theoremstyle{plain}
\newtheorem{cor}[thm]{\protect\corollaryname}
\theoremstyle{definition}
\newtheorem{defn}[thm]{\protect\definitionname}
\theoremstyle{plain}
\newtheorem{lem}[thm]{\protect\lemmaname}
\theoremstyle{plain}
\newtheorem{prop}[thm]{\protect\propositionname}
\providecommand{\corollaryname}{Corollary}
\providecommand{\definitionname}{Definition}
\providecommand{\lemmaname}{Lemma}
\providecommand{\propositionname}{Proposition}
\providecommand{\remarkname}{Remark}
\providecommand{\theoremname}{Theorem}
\begin{document}
\global\long\def\F{\mathrm{\mathbf{F}} }%
\global\long\def\Aut{\mathrm{Aut}}%
\global\long\def\C{\mathbf{C}}%
\global\long\def\H{\mathcal{H}}%
\global\long\def\U{\mathcal{U}}%
\global\long\def\P{\mathcal{P}}%
\global\long\def\ext{\mathrm{ext}}%
\global\long\def\hull{\mathrm{hull}}%
\global\long\def\triv{\mathrm{triv}}%
\global\long\def\Hom{\mathrm{Hom}}%

\global\long\def\trace{\mathrm{tr}}%
\global\long\def\End{\mathrm{End}}%

\global\long\def\L{\mathcal{L}}%
\global\long\def\W{\mathcal{W}}%
\global\long\def\E{\mathbb{E}}%
\global\long\def\SL{\mathrm{SL}}%
\global\long\def\R{\mathbf{R}}%
\global\long\def\Pairs{\mathrm{PowerPairs}}%
\global\long\def\Z{\mathbf{Z}}%
\global\long\def\rs{\to}%
\global\long\def\A{\mathcal{A}}%
\global\long\def\a{\mathbf{a}}%
\global\long\def\rsa{\rightsquigarrow}%
\global\long\def\D{\mathbf{D}}%
\global\long\def\b{\mathbf{b}}%
\global\long\def\df{\mathrm{def}}%
\global\long\def\eqdf{\stackrel{\df}{=}}%
\global\long\def\ZZ{\overline{Z}}%
\global\long\def\Tr{\mathrm{Tr}}%
\global\long\def\N{\mathbf{N}}%
\global\long\def\std{\mathrm{std}}%
\global\long\def\HS{\mathrm{H.S.}}%
\global\long\def\e{\mathbf{e}}%
\global\long\def\c{\mathbf{c}}%
\global\long\def\d{\mathbf{d}}%
\global\long\def\AA{\mathbf{A}}%
\global\long\def\BB{\mathbf{B}}%
\global\long\def\u{\mathbf{u}}%
\global\long\def\v{\mathbf{v}}%
\global\long\def\spec{\mathrm{spec}}%
\global\long\def\Ind{\mathrm{Ind}}%
\global\long\def\half{\frac{1}{2}}%
\global\long\def\Re{\mathrm{Re}}%
\global\long\def\Im{\mathrm{Im}}%
\global\long\def\Rect{\mathrm{Rect}}%
\global\long\def\Crit{\mathrm{Crit}}%
\global\long\def\Stab{\mathrm{Stab}}%
\global\long\def\SL{\mathrm{SL}}%

\title{Explicit spectral gaps for random covers of Riemann surfaces}
\author{Michael Magee and Frédéric Naud}
\begin{abstract}
We introduce a permutation model for random degree $n$ covers $X_{n}$
of a non-elementary convex-cocompact hyperbolic surface $X=\Gamma\backslash\mathbb{H}$.
Let $\delta$ be the Hausdorff dimension of the limit set of $\Gamma$.
We say that a resonance of $X_{n}$ is \emph{new }if it is not a resonance
of $X$, and similarly define new eigenvalues of the Laplacian.

We prove that for any $\epsilon>0$ and $H>0$, with probability tending
to $1$ as $n\to\infty$, there are no new resonances $s=\sigma+it$
of $X_{n}$ with $\sigma\in[\frac{3}{4}\delta+\epsilon,\delta]$ and
$t\in[-H,H]$. This implies in the case of $\delta>\frac{1}{2}$ that
there is an explicit interval where there are no new eigenvalues of
the Laplacian on $X_{n}$. By combining these results with a deterministic
`high frequency' resonance-free strip result, we obtain the corollary
that there is an $\eta=\eta(X)$ such that with probability $\to1$
as $n\to\infty$, there are no new resonances of $X_{n}$ in the region
$\{\,s\,:\,\mathrm{Re}(s)>\delta-\eta\,\}$.
\end{abstract}

\maketitle
\tableofcontents{}

\section{Introduction}

This paper is about spectral gaps for random Riemann surfaces. More
specifically, we are interested in various notions of spectral gap
for random covers of a fixed Schottky Riemann surface. This is in
close analogy to questions about the spectral gap of a random regular
graph, and this analogy informs our model for random coverings, so
we begin with a discussion on graphs.

Let $G$ be a $k$-regular graph on $n$ vertices. Then the adjacency
matrix $A_{G}$ of $G$ has $n$ real eigenvalues in $[-k,k]$ and
$k$ appears as an eigenvalue with multiplicity equal to the number
of connected components of $G$. Denoting by
\[
k=\lambda_{0}\geq\lambda_{1}\geq\cdots\geq\lambda_{n}
\]
 the eigenvalues of $G$, the \emph{spectral gap} of $G$ is $\lambda_{0}-\lambda_{1}$.
If $G$ is connected, then $\lambda_{0}>\lambda_{1}$ and the spectral
gap is related to the exponential rate at which the random walk on
$G$ converges to the uniform measure. As such, it is an important
quantity in theoretical computer science, and accordingly, there has
been a great deal of interest in the spectral gap of a \emph{random}
regular graph. Alon's conjecture \cite{Alon}, now a theorem due to
Friedman \cite{Friedman}, says that for any $\epsilon>0$, as $n\to\infty$,
the probability that $\lambda_{1}(G_{n})>2\sqrt{k-1}+\epsilon$ tends
to zero, when $G_{n}$ is sampled uniformly at random from $k$-regular
graphs with $n$ vertices. The relevance of the quantity $2\sqrt{k-1}$
is that for any $k$-regular graph with $n$ vertices, a result of
Alon-Boppana \cite{Nilli} says that $\lambda_{1}(G)\geq2\sqrt{k-1}-o_{n}(1)$,
so $2\sqrt{k-1}$ is an asymptotically optimal lower bound for $\lambda_{1}(G)$,
often called the \emph{Ramanujan bound} after \cite{LPS}.

The model of a random graph described above chooses random graphs
according to a uniform distribution. Another popular model for a random
$2k$-regular graph is called the \emph{permutation model }and is
the one we wish to focus on in the sequel. Let $\Gamma=\langle\gamma_{1},\ldots,\gamma_{k},\gamma_{1}^{-1},\ldots,\gamma_{k}^{-1}\rangle$
be a free group on $k$ generators, $k\geq2$, and let $S_{n}$ denote
the symmetric group on $n$ letters, and $\phi_{n}$ be a random homomorphism
from $\Gamma$ to $S_{n}$, sampled uniformly from all possible homomorphisms.
Since $\Gamma$ is free, a homomorphism is described simply by choosing
the images $\phi_{n}(\gamma_{i})$ of the generators of $\Gamma$
independently and uniformly from $S_{n}$. Then let $G_{n}$ be the
random graph with vertex set $[n]\eqdf\{1,\ldots,n\}$ and with an
edge between $i$ and $j$ if there is a generator $\gamma_{a}$ such
that $\phi_{n}(\gamma_{a})(i)=j$. We will adapt this model to a model
of a random Riemann surface. 

Let $X$ be a connected, non-elementary, non-compact, convex co-compact
hyperbolic surface. Then $X=\Gamma\backslash\mathbb{H}$ where $\mathbb{H}$
is the hyperbolic upper half plane and $\Gamma$ is a free subgroup
of $\SL_{2}(\R)$. We view $X$ as fixed throughout the paper. We
let $X_{n}$ be the random $n$-cover of $X$ obtained as a fibered
product $X_{n}\eqdf\mathbb{H}\times_{\phi_{n}}[n]$. More precisely,
$X_{n}$ is the quotient of $\mathbb{H}\times[n]$ by the diagonal
action of $\Gamma$
\[
\gamma.(x,i)=(\gamma(x),\phi_{n}(\gamma)(i)).
\]
If $S\subset[n]$ is a set of representatives for the orbits of $\Gamma$
on $[n]$ via $\phi_{n}$, and $\Gamma_{i}\eqdf\Stab_{\Gamma}(i)$
is the stabilizer of $i\in S$, then $X_{n}$ is isomorphic to the
disjoint union of (connected) covers $\Gamma_{i}\backslash\mathbb{H}$,
i.e. 
\[
X_{n}=\bigsqcup_{i\in S}\Gamma_{i}\backslash\mathbb{H}.
\]
Notice that we have 
\[
\sum_{i\in S}[\Gamma:\Gamma_{i}]=n.
\]
We say that a property $E(\phi_{n})$ of the random $\phi_{n}$ holds
\emph{asymptotically almost surely} (a.a.s.) if as $n\to\infty$,
the probability that $E(\phi_{n})$ holds tends to $1$. It is an
elementary calculation\footnote{For a concrete reference, this statement follows from \cite[Thm.13]{BroderShamir}.
One can also prove by elementary combinatorial arguments that the
probability that $X_{n}$ is connected as $n\rightarrow+\infty$ is
bigger than $1-\frac{C(k)}{n^{k-1}}$, where $C(k)>0$ is a constant
depending only on $k$.} that a.a.s. $\Gamma$ acts transitively on $[n]$ via $\phi_{n}$
and hence, a.a.s. $X_{n}$ is connected. This also follows from the
main theorems below. Although we do not assume $X_{n}$ is connected
at any point, it would not hurt to assume this on a first reading.

We now discuss the spectral theory of $X$ and $X_{n}$. The group
$\Gamma$ acts properly discontinuously on $\mathbb{H},$ but for
any point $o\in\mathbb{H}$, the orbit $\Gamma o$ accumulates on
$\partial\mathbb{H}=\R\cup\{\infty\}$ and the accumulation set of
this orbit is called the \emph{limit set of $\Gamma$ }and denoted
by $\Lambda(\Gamma)$. This $\Lambda(\Gamma)$ is a perfect nowhere
dense fractal and has an associated Hausdorff dimension $\delta\eqdf\dim_{\mathrm{Haus}}(\Lambda(\Gamma))\in[0,1)$.
By a result of Lax and Phillips \cite{LP}, the spectrum of the Laplacian
$\Delta_{X}$ is discrete in the range $[0,\frac{1}{4})$, and Patterson
\cite{Patterson} proved that if $\delta>\frac{1}{2}$, then the lowest
eigenvalue of $\Delta_{X}$ is $\delta(1-\delta)$. If $\delta\leq\frac{1}{2}$
then there are no eigenvalues of $\Delta_{X}$. The same is true for
$X_{n}$, with the same $\delta$ (although $\delta(1-\delta)$ will
be simple if and only if $X_{n}$ is connected). More generally, if
$\lambda$ is any eigenvalue of $X$, then by lifting eigenfunctions
through the covering map, $\lambda$ is an eigenvalue for $X_{n}$
with at least as large multiplicity. 

The first main theorem of our paper is the following.
\begin{thm}
\label{thm:L2-gap}Assume that $\delta>\frac{1}{2}$. Then for any
$\sigma_{0}\in\left(\frac{3}{4}\delta,\delta\right)$, a.a.s.
\begin{equation}
\spec(\Delta_{X_{n}})\cap\left[\delta\left(1-\delta\right),\sigma_{0}(1-\sigma_{0})\right]=\spec(\Delta_{X})\cap\left[\delta\left(1-\delta\right),\sigma_{0}(1-\sigma_{0})\right]\label{eq:gap-1}
\end{equation}
and the multiplicities on both sides are the same.
\end{thm}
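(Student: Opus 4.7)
The plan is to convert the $L^{2}$-spectral statement into a statement about real resonances, and then invoke the probabilistic ``no new resonances'' result announced in the abstract. By the Lax--Phillips theory \cite{LP} applied componentwise to $X_{n}$, the spectrum of $\Delta_{X_{n}}$ in $[0,\frac{1}{4})$ is discrete of finite multiplicity, and the map $s\mapsto s(1-s)$ is a multiplicity-preserving bijection between the real resonances of $X_{n}$ in $(\frac{1}{2},1]$ and the $L^{2}$-eigenvalues of $\Delta_{X_{n}}$ below $\frac{1}{4}$; the same holds for $X$. Without loss of generality we may assume $\sigma_{0}>\frac{1}{2}$ (enlarging $\sigma_{0}$ only strengthens the claim). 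Under the bijection, the eigenvalue interval $[\delta(1-\delta),\sigma_{0}(1-\sigma_{0})]$ corresponds to the real resonance segment $[\sigma_{0},\delta]$, so \eqref{eq:gap-1} is equivalent to the assertion that, a.a.s., the multisets of real resonances of $X_{n}$ and $X$ in $[\sigma_{0},\delta]$ coincide.

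The inclusion ``$\supseteq$'' --- every resonance of $X$ in the segment is a resonance of $X_{n}$ of at least the same multiplicity --- is the resonance analogue of the eigenfunction lifting noted in the introduction, obtained by pulling generalised eigenfunctions back through $X_{n}\to X$. For the reverse inclusion ``$\subseteq$'', set $\epsilon\eqdf\sigma_{0}-\frac{3}{4}\delta>0$ and $H\eqdf 1$; the abstract's main result then guarantees that a.a.s. no new resonances $s=\sigma+it$ of $X_{n}$ appear with $\sigma\in[\frac{3}{4}\delta+\epsilon,\delta]=[\sigma_{0},\delta]$ and $t\in[-1,1]$. Restricting to $t=0$, every real resonance of $X_{n}$ in $[\sigma_{0},\delta]$ already comes from $X$. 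Combining the two inclusions and transferring back through $\lambda=s(1-s)$ proves Theorem~\ref{thm:L2-gap}.

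The only serious obstacle is the black-boxed ``no new resonances'' theorem itself, whose proof is the technical heart of the paper; the deduction of Theorem~\ref{thm:L2-gap} given that input is a soft exercise in Lax--Phillips. The one careful point in the reduction is the bookkeeping of multiplicities --- both in the resonance-free statement and in the Lax--Phillips correspondence $s\leftrightarrow s(1-s)$ --- so one should ensure that the probabilistic statement is proved with multiplicity (i.e.\ that the resonance divisor of $X_{n}$ on the stated region agrees a.a.s.\ with that of $X$), which is the natural form in which it is presumably established.
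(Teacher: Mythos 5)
Your proposal is correct and is essentially the paper's own proof: the paper deduces Theorem \ref{thm:L2-gap} directly from Theorem \ref{thm:rectangle-gap} via exactly the correspondence you describe, namely that eigenvalues $\sigma(1-\sigma)$ of $\Delta_{X_{n}}$ (resp.\ $\Delta_{X}$) with $\sigma>\frac{1}{2}$ are real resonances at $\sigma$ with the same multiplicity, all the hard content residing in Theorem \ref{thm:rectangle-gap} itself. One small repair: the parenthetical ``enlarging $\sigma_{0}$ only strengthens the claim'' is not right in general (for $\sigma_{0}>\frac{1}{2}$ increasing $\sigma_{0}$ shrinks the eigenvalue interval, so the statement for a larger $\sigma_{0}'$ need not imply the one for $\sigma_{0}$), but no WLOG is needed: an eigenvalue in $\left[\delta(1-\delta),\sigma_{0}(1-\sigma_{0})\right]$ corresponds to a real resonance at $s\in\left[\max(\sigma_{0},1-\sigma_{0}),\delta\right]\subseteq[\sigma_{0},\delta]$, which the rectangle of Theorem \ref{thm:rectangle-gap} already covers.
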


\begin{rem}
This theorem implies that a.a.s. the $X_{n}$ have a uniform spectral
gap, and this spectral gap only depends on $\delta$ and the gap between
the first two eigenvalues of $X$.
\end{rem}

\begin{rem}
If $\delta\in(\frac{1}{2},\frac{2}{3})$ then since $X_{n}$ has no
eigenvalues in $[\frac{1}{4},\infty)$ by a result of Lax and Phillips
\cite{LP}, Theorem \ref{thm:L2-gap} implies that a.a.s. $X_{n}$
has no new eigenvalues. 
\end{rem}

\begin{rem}
Theorem \ref{thm:L2-gap} can be viewed as a significant sharpening
of a result of Brooks and Makover \cite{BrooksMakover}, albeit in
the infinite area setting. See $\S$\ref{subsec:Prior-work} for a
more detailed discussion of this comparison.
\end{rem}

\begin{rem}
We point out that it is possible for $X_{n}$ to not be connected,
and in this case, there is no spectral gap. Even further, it is easy
to see that $X_{n}$ can be a connected cyclic cover of $X$, and
by results of \cite{JNS}, these have no uniform spectral gap.
\end{rem}

\begin{rem}
In the limit as $\delta\to1$, the range of forbidden eigenvalues
in (\ref{eq:gap-1}) becomes $[0,\frac{3}{16})$. This is interestingly
the same range covered by Selberg's $\frac{3}{16}$ Theorem \cite{Selberg}
on the spectral gap of congruence covers of the modular surface $\SL_{2}(\Z)\backslash\mathbb{H}.$
This should also be compared to the deterministic result of Gamburd
\cite{Gamburd1} for congruence covers of infinite index geometrically
finite subgroups of $\SL_{2}(\Z)$: assuming $\delta>\frac{5}{6}$,
he shows that the spectrum remains the same in the range $[\delta(1-\delta),\frac{5}{36})$.
See also \cite{Magee} for a generalization of this result to higher
dimensions.
\end{rem}

We write $\chi(X)$ for the Euler characteristic of $X$. It has recently
been proved by Ballmann, Matthiesen, and Mondal \cite{BMM} that if
$\chi(X)<0$, $\Delta_{X}$ has at most $-\chi(X)$ eigenvalues. If
$\chi(X)=-1$ then this means the only possible eigenvalue of $X$
is at $\delta(1-\delta)$ and thus Theorem \ref{eq:gap-1} yields
\begin{cor}
Assume that $\delta>\frac{1}{2}$. If $X$ is topologically a pair
of pants, or a torus with one hole, then for any $\sigma_{0}\in\left(\frac{3}{4}\delta,\delta\right)$,
a.a.s.
\[
\spec(\Delta_{X_{n}})\cap(\delta\left(1-\delta\right),\sigma_{0}(1-\sigma_{0})]=\emptyset,
\]
and $\delta(1-\delta)$ is a simple eigenvalue of $\Delta_{X_{n}}$.
\end{cor}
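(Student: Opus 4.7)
The plan is that this corollary follows essentially formally by combining Theorem \ref{thm:L2-gap} with two facts already cited in the excerpt: Patterson's theorem on the bottom of the spectrum, and the Ballmann--Matthiesen--Mondal bound on the total number of $L^{2}$ eigenvalues. No new estimates on covers are needed; the whole point is that the base surface has essentially no room for eigenvalues.

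First I would determine $\chi(X)$ in the two cases. A pair of pants has genus $0$ and $3$ ends (funnels), giving $\chi(X)=2-0-3=-1$; a one-holed torus has genus $1$ and $1$ end, giving $\chi(X)=2-2-1=-1$. In both cases the Ballmann--Matthiesen--Mondal result \cite{BMM} cited just above the corollary says that $\Delta_{X}$ has at most $-\chi(X)=1$ eigenvalue in $[0,\tfrac14)$. On the other hand, since $\delta>\tfrac12$, Patterson's theorem \cite{Patterson} guarantees that $\delta(1-\delta)\in[0,\tfrac14)$ \emph{is} an eigenvalue of $\Delta_{X}$ (in fact the bottom of the spectrum). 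Combining these, $\delta(1-\delta)$ is the unique $L^{2}$ eigenvalue of $\Delta_{X}$ below $\tfrac14$, and it is simple.

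Now I would invoke Theorem \ref{thm:L2-gap}. Fix $\sigma_{0}\in(\tfrac34\delta,\delta)$; then $\sigma_{0}(1-\sigma_{0})<\tfrac14$, so the interval $[\delta(1-\delta),\sigma_{0}(1-\sigma_{0})]$ is entirely contained in the discrete part of the spectrum of $X$. By the previous paragraph, in this interval $\spec(\Delta_{X})$ consists of the single simple eigenvalue $\delta(1-\delta)$. Theorem \ref{thm:L2-gap} asserts that a.a.s.\ the spectra of $\Delta_{X_{n}}$ and $\Delta_{X}$ agree on $[\delta(1-\delta),\sigma_{0}(1-\sigma_{0})]$ with multiplicities. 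Hence a.a.s.\ $\delta(1-\delta)$ is the only eigenvalue of $\Delta_{X_{n}}$ in this interval, and it is simple; equivalently,
\[
\spec(\Delta_{X_{n}})\cap\bigl(\delta(1-\delta),\sigma_{0}(1-\sigma_{0})\bigr]=\emptyset,
\]
which is exactly the claim.

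There is essentially no obstacle here beyond correctly bookkeeping the topology and quoting the two inputs; all the analytic and probabilistic content has been absorbed into Theorem \ref{thm:L2-gap}. The only mildly subtle point worth a sentence is that simplicity of $\delta(1-\delta)$ on $X_{n}$ is not a statement that $X_{n}$ is connected: it is possible a priori for $X_{n}$ to have several components each carrying an eigenvalue at $\delta(1-\delta)$, which would violate the multiplicity statement, so the corollary also implicitly records that a.a.s.\ $X_{n}$ is connected. This is consistent with the a.a.s.\ connectedness already noted earlier in the introduction.
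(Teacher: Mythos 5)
Your argument is correct and is essentially the paper's own (the paper derives the corollary in one sentence from the Ballmann--Matthiesen--Mondal bound with $\chi(X)=-1$, Patterson's theorem, and Theorem \ref{thm:L2-gap}); your extra remark that simplicity on $X_{n}$ encodes connectedness is a valid observation. The only microscopic imprecision is the claim $\sigma_{0}(1-\sigma_{0})<\tfrac14$, which fails when $\sigma_{0}=\tfrac12$ (possible if $\delta<\tfrac23$), but this is harmless since BMM plus Patterson already show $\delta(1-\delta)$ is the only eigenvalue of $\Delta_{X}$ anywhere, and $\tfrac14$ is never an eigenvalue by Lax--Phillips.
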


We now turn to what we can say about general $\delta\in(0,1)$. In
the case $\delta\leq\half$, $\Delta_{X}$ and $\Delta_{X_{n}}$ will
have no discrete $L^{2}$ spectrum, so one must consider a more subtle
notion of spectral gap. 

For any non-elementary convex co-compact hyperbolic $Y$ with $\delta=\delta(Y)$
(e.g. $Y=X$, $Y=X_{n}$) the \emph{resolvent}
\[
R_{Y}(s)\eqdf(\Delta_{Y}-s(1-s))^{-1}:L^{2}(Y)\rightarrow L^{2}(Y)
\]
is, a priori, a meromorphic family of bounded operators in the right
half plane $\mathrm{Re}(s)>\half$ with poles precisely at real $s$
such that $s(1-s)$ is an eigenvalue of $\Delta_{X}.$ By work of
Mazzeo-Melrose \cite{MazzeoMelrose}, it can be meromorphically continued
to a family of bounded operators from $C_{0}^{\infty}(Y)\rightarrow C^{\infty}(Y)$
that is meromorphic in $s\in\C$. In the case of hyperbolic surfaces,
a simpler proof of the meromorphic continuation is due to Guillopé
and Zworski \cite{GZ1}, see also the book \cite{Borthwick}.

The poles of the meromorphically continued resolvent are called \textit{resonances}\emph{
of $Y$}. In the sequel we write $\mathcal{R}_{Y}\subset\C$ for the
multi-set of resonances, repeated according to multiplicities\footnote{Following \cite[Def. 8.2]{Borthwick}, the multiplicity of a resonance
$s$ of $Y$ is given by $\mathrm{rank}\left(\int_{\gamma}R_{Y}(s)ds\right)$
where $\gamma$ is an anticlockwise oriented circle enclosing $s$
and no other resonance of $Y$. }. Resonances, unlike $L^{2}$-eigenvalues, correspond to a non self-adjoint
spectral problem and are therefore notoriously difficult to study.
There is however a clear analog of the spectral gap in this setting.
The `bass resonance' is located at $s=\delta$ and by a result of
Naud \cite{NaudExpanding} if $Y$ is connected then there exists
a constant $\epsilon_{\Gamma}>0$ such that 
\[
\mathcal{R}_{Y}\cap\{\,s\,:\,\mathrm{Re}(s)\geq\delta-\epsilon_{\Gamma}\,\}=\{\delta\}.
\]
We call the existence of such a resonance free strip a \emph{spectral
gap} for $Y$. The spectral gap on hyperbolic surfaces has numerous
applications, from prime geodesic theorems \cite{Naudasymptotics}
to local $L^{2}$-asymptotics of waves \cite{GN1}. A recent breakthrough
of Bourgain-Dyatlov \cite{BDgap} showed that there always exists
an \textit{``essential spectral gap}'' past the line $\{\Re(s)=\frac{1}{2}\}$,
i.e. there exists $\tilde{\epsilon}=\tilde{\epsilon}(Y)>0$ such that
\[
\mathcal{R}_{Y}\cap\{\,s\,:\,\mathrm{Re}(s)\geq\frac{1}{2}-\tilde{\epsilon}\,\}
\]
is a finite set. The proof is based on the general phenomenon of ``fractal
uncertainty principle'', see \cite{DyatlovFUP}. We point out that
$\widetilde{\epsilon}>0$ can be made explicit, see Jin-Zhang \cite{JZ1}
and also Dyatlov-Jin \cite{DJ1}. For a broader view and a state of
the art survey on the mathematical theory of resonances including
hyperbolic manifolds and related conjectures, we recommend to read
\cite{Zworski_survey}. Our next main result is the following.
\begin{thm}
\noindent \label{thm:rectangle-gap}Fix any $H>0$ and $\sigma_{0}\in\left(\frac{3}{4}\delta,\delta\right)$,
and let
\[
\Rect(\sigma_{0},H)\eqdf\{\,s=\sigma+it\,:\,\sigma\in[\sigma_{0},\delta]\ \mathrm{and}\ \vert t\vert\leq H\,\}.
\]
Then a.a.s.
\[
\mathcal{R}_{X_{n}}\bigcap\Rect(\sigma_{0},H)=\mathcal{R}_{X}\bigcap\Rect(\sigma_{0},H)
\]
where the multiplicities on both sides are the same.
\end{thm}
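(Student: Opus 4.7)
The plan is to follow the now-standard transfer operator approach to resonances of convex co-compact surfaces. First, I would fix a symbolic Schottky/Bowen--Series coding of the limit set of $\Gamma$ and build the associated classical transfer operator $\L_{s}$ acting on holomorphic functions on disks adapted to this coding, together with a twisted version $\L_{s,n}$ acting on $\C^{n}$-valued such functions, with the twist encoded by $\phi_{n}:\Gamma\to S_{n}$. By the Guillop\'{e}--Zworski/Patterson--Perry approach, the resonances of $X_{n}$, with multiplicities, coincide with zeros of the Fredholm determinant $\det(I-\L_{s,n})$. The representation $\C^{n}$ splits as $\C\mathbf{1}\oplus V_{0}$ (the orthogonal complement of the constants), inducing a decomposition $\L_{s,n}=\L_{s}\oplus\L_{s,n}^{0}$, so the \emph{new} resonances of $X_{n}$ are exactly the zeros of $d_{n}(s)\eqdf\det(I-\L_{s,n}^{0})$.

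The goal becomes: a.a.s., $d_{n}$ has no zeros in $\Rect(\sigma_{0},H)$, with multiplicities matching those of $\det(I-\L_{s})$. Expanding $\log d_{n}(s)=-\sum_{k\geq1}\trace((\L_{s,n}^{0})^{k})/k$ and invoking a Ruelle-type trace formula, $\trace((\L_{s,n}^{0})^{k})$ is a weighted sum over periodic orbits of the coding with weights of the form $|\gamma'|^{s}\cdot(\#\mathrm{Fix}(\phi_{n}(\gamma))-1)$. The key probabilistic input is that for uniformly random $\phi_{n}$, the quantities $\#\mathrm{Fix}(\phi_{n}(\gamma))-1$ are centered with higher moments controlled via the combinatorics of random permutation representations of a free group (Nica, Linial--Puder), with correlations across several $\gamma_{i}$'s governed by collisions of their reduced word structure.

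Given this, the strategy is to bound high moments $\E[|\trace((\L_{s,n}^{0})^{k})|^{2p}]$ by a thermodynamic expression at the doubled parameter, and then combine Markov's inequality with a union bound over a fine mesh in $\Rect(\sigma_{0},H)$. A Cauchy--Schwarz/second-moment argument brings the pressure $P(-2\sigma\log|\phi'|)$ into play: this quantity vanishes at $\sigma=\delta/2$ and decreases in $\sigma$, while the density of candidate zeros in the rectangle (controlled by a Weyl-type a priori bound on resonances, \`{a} la Guillop\'{e}--Zworski) and the mesh size contribute polynomial-in-$n$ factors that must be absorbed. The threshold $\sigma>\tfrac{3}{4}\delta$ is expected to emerge from balancing this negative pressure against the combinatorial growth of the trace sum at the needed moment order. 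Once uniform probabilistic control on $d_{n}$ is established on the rectangle, the equality of multiplicities follows from the Gohberg--Sigal argument principle applied to the holomorphic family $d_{n}$.

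The main obstacle I anticipate is obtaining the high-moment bounds on $\trace((\L_{s,n}^{0})^{k})$ uniformly in $s\in\Rect(\sigma_{0},H)$ and valid up to $k$ of order $\log n$, with correct tracking of expected products $\E\bigl[\prod_{i}(\#\mathrm{Fix}(\phi_{n}(\gamma_{i}))-1)\bigr]$. This requires a Bordenave--Collins-style quantitative ``asymptotic freeness'' input for random permutation representations of free groups on the complement of the trivial representation, strong enough to dominate the exponential growth of the number of primitive orbits of a given word length (on the order of $e^{\delta k}$). With the combinatorial moment bounds in hand, reducing from ``$d_{n}$ has no zeros'' to ``a.a.s.\ no new resonances with matching multiplicities'' should then be comparatively routine.
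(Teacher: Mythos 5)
Your setup (Schottky coding, twisted transfer operators, the splitting $\C^{n}=\mathbf{1}\oplus V^{0}$ so that new resonances are zeros of $\det(1-\L_{s,\rho_{n}^{0}})$) matches the paper, but the core of your argument has a genuine gap. You propose to control $\log d_{n}(s)$ through the expansion $-\sum_{k}\trace((\L_{s,n}^{0})^{k})/k$ and to rule out zeros by bounding high moments of the traces at a mesh of points in the rectangle. This does not work as stated: the expansion only converges where the spectral radius of $\L_{s,n}^{0}$ is below $1$, which in $\Rect(\sigma_{0},\delta)$ is exactly what you are trying to prove, and for a non-normal operator smallness of $\trace((\L^{0})^{k})$ for $k\lesssim\log n$ does not exclude an eigenvalue at $1$ (eigenvalue sums can cancel). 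Likewise, values of a holomorphic function on a fine mesh do not control its zeros without an additional mechanism such as an argument-principle/Jensen bound anchored by a lower bound on $|d_{n}|$ somewhere. The paper's resolution is precisely this missing mechanism: Jensen's formula on a disc enclosing the rectangle, centered at a large real $b$ where a pointwise lower bound on $|\zeta_{\tau,\rho_{n}^{0}}(b)|$ is available (Proposition \ref{prop:pointiwse-bound}), together with Weyl's inequality $\log|\det(1-\L^{2})|\leq\|\L\|_{\HS}^{2}$ on the circle; because Jensen's formula is an integral, one can take $\E_{n}$ inside and apply Markov once to the expected number of zeros, avoiding any mesh or union bound.

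The second gap is the probabilistic input and the origin of the threshold $\frac{3}{4}\delta$. You ask for high-moment bounds $\E[|\trace((\L_{s,n}^{0})^{k})|^{2p}]$ uniform in $s$ and up to $k\sim\log n$, via a Bordenave--Collins-type quantitative freeness strong enough to beat $e^{\delta k}$ growth, and you acknowledge you do not have this; you also leave the emergence of $\sigma_{0}>\frac{3}{4}\delta$ to an unspecified balancing. The paper needs only \emph{first-moment} estimates $\E_{n}[\Tr\rho_{n}^{0}(\gamma)]$ (Broder--Shamir/Puder, Theorem \ref{thm:broder-shamir-puder}), because the quantity averaged is the squared Hilbert--Schmidt norm of the \emph{refined} Bourgain--Dyatlov operator $\L_{\tau,s,\rho_{n}^{0}}$ with $\tau=n^{-2/\delta}$, expanded as a weighted sum over pairs of words in $\ZZ(\tau)$; the exponent $\frac{3}{4}\delta$ then comes from the competition between the word-count $\tau^{-\delta}$, the weight $\tau^{2\sigma}$, the $n$-gain from Puder's bound, and crucially a counting estimate for pairs whose quotient is a proper power (Proposition \ref{prop:key-prop}), which your sketch does not identify. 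Without (i) a valid bridge from trace/HS-type bounds to absence of zeros uniformly on the rectangle and (ii) the proper-power pair counting that produces the explicit $\frac{3}{4}\delta$, the proposal does not yield the theorem.
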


\begin{rem}
Because all eigenvalues $\lambda_{\sigma}=\sigma(1-\sigma)$ of $\Delta_{X_{n}}$
with $\sigma>\frac{1}{2}$ give a resonance of $X_{n}$ at $\sigma$,
with the same multiplicity, and the same is true for $X$, Theorem
\ref{thm:rectangle-gap} implies Theorem \ref{thm:L2-gap} and extends
it to resonances in rectangles of explicit width and any bounded height
$2H$. We point out that Theorem \ref{thm:rectangle-gap} actually
yields new information on low frequency resonances past the line $\{\Re(s)=\frac{1}{2}\}$
when $\delta\in(\frac{1}{2},\frac{2}{3})$.
\end{rem}

This leaves the question of how to deal with resonances with large
imaginary part. For this we have the following theorem that applies
to arbitrary covers. Note that here there is no randomness involved.
\begin{thm}
\label{thm:high-frequency} Assume that $\Gamma$ is a non-elementary
convex co-compact group. Then there exist $\epsilon_{\Gamma}>0$ and
$T_{\Gamma}>0$ such that for all finite index subgroups $\widetilde{\Gamma}\subset\Gamma$,
we have for $\widetilde{X}=\widetilde{\Gamma}\backslash\mathbb{H}$,
\[
\mathcal{R}_{\widetilde{X}}\cap\{\,s\,:\,\Re(s)\geq\delta-\epsilon_{\Gamma}\ \mathrm{and}\ \vert\Im(s)\vert\geq T_{\Gamma}\,\}=\emptyset.
\]
\end{thm}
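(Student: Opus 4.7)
The plan is to reduce the statement to a Dolgopyat-type cancellation estimate for twisted Ruelle transfer operators that is uniform over all finite-index subgroups. A finite-index subgroup $\widetilde{\Gamma}\subset\Gamma$ is equivalent data to a homomorphism $\phi:\Gamma\to S_{n}$ with $n=[\Gamma:\widetilde{\Gamma}]$, which in turn gives a unitary representation $\rho:\Gamma\to\mathrm{U}(V)$ on $V=\C^{n}$ (the quasi-regular representation). I would fix once and for all a Bowen--Series Markov coding of the limit set $\Lambda(\Gamma)$, with expanding map $T:J\to J$ whose inverse branches $T_{a}$ correspond to a symmetric generating set of $\Gamma$, and introduce the twisted transfer operator
\[
(\L_{s,\rho}f)(x)=\sum_{a}|T_{a}'(x)|^{s}\,\rho(a)\,f(T_{a}x)
\]
acting on $V$-valued H\"older (or $C^{1}$) functions on $J$. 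By the dynamical zeta function approach (Pollicott--Sharp, together with the Guillop\'e--Zworski meromorphic continuation), the resonances of $\widetilde{X}$ in a fixed neighbourhood of the critical line $\Re(s)=\delta$ are, with multiplicity, precisely the values of $s$ at which $1$ is an eigenvalue of $\L_{s,\rho}$.

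The core of the proof is a high-frequency $L^{2}$ estimate for the iterates of $\L_{s,\rho}$ that is uniform in the representation. Concretely, the target is: there exist constants $C,N_{0},\eta_{0},T_{\Gamma},\epsilon_{\Gamma}>0$, depending only on $\Gamma$, such that for all $s=\sigma+it$ with $\sigma\in[\delta-\epsilon_{\Gamma},\delta]$ and $|t|\geq T_{\Gamma}$, for every finite cover $\widetilde{\Gamma}$, and for all $N\geq N_{0}\log|t|$,
\[
\|\L_{s,\rho}^{N}f\|_{L^{2}(J;V)}\leq C\,e^{-\eta_{0}N}\,\|f\|_{C^{1}(J;V)}.
\]
The observation that makes this estimate uniform in $\rho$ is that after squaring,
\[
\|(\L_{s,\rho}^{N}f)(x)\|_{V}^{2}=\sum_{y_{1},y_{2}\in T^{-N}(x)}e^{-\sigma(\tau_{N}(y_{1})+\tau_{N}(y_{2}))}\,e^{-it(\tau_{N}(y_{1})-\tau_{N}(y_{2}))}\,\langle\rho(g_{y_{2}})f(y_{2}),\rho(g_{y_{1}})f(y_{1})\rangle_{V},
\]
with $\tau(x)=-\log|T'(x)|$ the geometric potential. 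The $V$-inner product is bounded pointwise by $\|f(y_{1})\|_{V}\|f(y_{2})\|_{V}$ using unitarity of $\rho$, reducing the problem to extracting cancellation from the scalar oscillating phase $e^{-it(\tau_{N}(y_{1})-\tau_{N}(y_{2}))}$. This cancellation is governed by the uniform non-integrability (temporal distortion) of $\tau$, which is a property of the base dynamics of $\Gamma$ alone. The arguments of Naud \cite{NaudExpanding} can then be transplanted to the $V$-valued setting, with all constants depending only on the Bowen--Series coding of $\Gamma$.

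From this estimate the theorem follows quickly: if $s=\sigma+it$ with $|t|\geq T_{\Gamma}$ and $\sigma\geq\delta-\epsilon_{\Gamma}$ were a resonance of $\widetilde{X}$, then $\L_{s,\rho}$ would have $1$ as an eigenvalue with some eigenfunction $f_{0}\in C^{1}(J;V)$, giving $\|\L_{s,\rho}^{N}f_{0}\|_{L^{2}}=\|f_{0}\|_{L^{2}}$ for every $N$, contradicting the exponential decay for $N$ large. The main obstacle is verifying that the Dolgopyat cancellation mechanism truly survives twisting by an arbitrary unitary representation with constants depending only on $\Gamma$: one must check that every quantitative ingredient (the local non-integrability condition, the partition into stable/unstable rectangles, the $L^{2}$-smoothing inequality, and the construction of the Dolgopyat contraction operators) can be phrased purely in terms of the base dynamics and the scalar phase $e^{-it\tau_{N}}$, with no dependence on $\rho$ beyond unitarity of the matrices $\rho(a)$.
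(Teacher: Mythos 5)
Your overall skeleton matches the paper's: encode the cover by the induced (permutation/quasi-regular) unitary representation, pass to transfer operators twisted by $\rho$, prove a high-frequency decay estimate that is uniform in $\rho$ because unitarity reduces everything to cancellation in the scalar phase, and then contradict the existence of an eigenfunction coming from a zero of the twisted zeta function (the paper gets that eigenfunction via the Venkov--Zograf induction formula \cite{VZ,FP}). Where you genuinely diverge is in how the key estimate is proved. You propose to transplant the full Dolgopyat machinery of \cite{NaudExpanding} (stable/unstable rectangles, Dolgopyat contraction operators, $L^{2}$-smoothing) to the $V$-valued setting; this is exactly the route taken in \cite{OW,MOW} for congruence covers, and it can be made to work, but it is long and the interplay between the unknown ``phase'' of the vector-valued $f$ and the dynamical phase must be re-checked at each step. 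The paper deliberately avoids this: it expands $\int_{\Lambda}\Vert\L_{s,\rho}^{N}f\Vert_{V}^{2}d\mu_{\delta}$ over pairs of inverse branches, splits into near-diagonal pairs (controlled by the uniform non-integrability property, Proposition \ref{UNI1}, plus the pressure estimate) and off-diagonal pairs (controlled by the Bourgain--Dyatlov Fourier decay theorem for the Patterson--Sullivan measure, Theorem \ref{Decay2}), and gets Proposition \ref{Decay1} in a page, with uniformity in $(\rho,V)$ essentially free. So your route buys self-containedness relative to \cite{BDFourier} at the cost of substantially more delicate work; the paper's buys brevity and robustness by outsourcing the oscillatory cancellation to the fractal Fourier decay input.

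One step in your endgame is genuinely incomplete as stated. Your decay estimate bounds $\Vert\L_{s,\rho}^{N}f_{0}\Vert_{L^{2}(\mu_{\delta})}$ by $e^{-\eta_{0}N}\Vert f_{0}\Vert_{C^{1}}$, with the measure supported on the limit set, while the eigenfunction produced from a resonance lives on (a neighbourhood of) the intervals $I\supset\Lambda$. From $\L_{s,\rho}^{N}f_{0}=f_{0}$ you only conclude $\Vert f_{0}\Vert_{L^{2}(\mu_{\delta})}=0$, and a nonzero $C^{1}$ eigenfunction could a priori vanish on $\Lambda$, so there is no immediate contradiction with ``$\Vert f_{0}\Vert_{L^{2}}>0$''. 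You need an extra argument: either show that an eigenfunction vanishing on $\Lambda$ vanishes identically (iterate the eigenfunction equation, use the uniform contraction toward $\Lambda$ together with the pressure bound of Lemma \ref{lem:Pressure-estimate} for $\sigma$ close to $\delta$), or do what the paper does, namely convert the $L^{2}(\mu_{\delta})$ decay into sup-norm bounds on $F_{s}$ and $F_{s}'$ by combining the Lasota--Yorke estimate (Lemma \ref{lem:LasotaYorke}) with the Ruelle--Perron--Frobenius theorem (Theorem \ref{thm:RPF}) applied to $\L_{\delta}$, choosing the two word lengths $N(t),N_{1}(t)\asymp\log\vert t\vert$ carefully, and then contradict the normalization $\Vert F_{s}\Vert_{(t),V}=1$ for $\vert t\vert$ large. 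With that repaired, and with the (nontrivial but known) verification that the Dolgopyat mechanism survives the unitary twist, your proof goes through.
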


\begin{rem}
\textit{\emph{F}}rom the work of Bourgain and Dyatlov \cite{BDFourier},
we know that there exists $\varepsilon(\delta)>0$, depending only
on $\delta$ and thus uniform on covers such that 
\[
\mathcal{R}_{X}\cap\{\Re(s)\geq\delta-\varepsilon(\delta)\}
\]
is a \textbf{finite} set. However the result of Bourgain and Dyatlov
does not provide any information on the finite set of resonances in
this uniform strip. \textit{\emph{Theorem \ref{thm:high-frequency}
}}shows that new resonances can only appear in a compact region. 
\end{rem}

\noindent \bigskip{}

\noindent Combining Theorem \ref{thm:rectangle-gap} with Theorem
\ref{thm:high-frequency} yields the following corollary. 
\begin{cor}
\noindent \label{cor:A.a.s.-the-random-covers-have-spectral-gap}A.a.s.
the random cover $X_{n}\rightarrow X$ has a uniform spectral gap.
In particular, above each non elementary surface $X$, one can produce
an infinite family of covers $X_{n}$ with degree $n$ and having
a uniform spectral gap.
\end{cor}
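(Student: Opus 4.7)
My plan is to carefully match the parameters of Theorem \ref{thm:rectangle-gap} and Theorem \ref{thm:high-frequency} so that the rectangle where the random theorem applies and the high-frequency wedge where the deterministic theorem applies together cover the entire half-plane $\{\mathrm{Re}(s)\geq\delta-\eta\}$ for some uniform $\eta=\eta(X)>0$.

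First I would record two inputs depending only on $\Gamma$ (hence uniform in $n$). By Naud's spectral gap result \cite{NaudExpanding} quoted earlier, since $X$ is connected, there exists $\epsilon_{1}=\epsilon_{1}(X)>0$ such that the only resonance of $X$ with $\mathrm{Re}(s)\geq\delta-\epsilon_{1}$ is the bass resonance $s=\delta$, with multiplicity $1$. Next, Theorem \ref{thm:high-frequency} applied to $\Gamma$ itself produces constants $\epsilon_{2}=\epsilon_{\Gamma}>0$ and $T=T_{\Gamma}>0$ such that \emph{for every} finite cover $\widetilde{X}\to X$, and in particular for $\widetilde{X}=X_{n}$,
\[
\mathcal{R}_{\widetilde{X}}\cap\{\,s\,:\,\mathrm{Re}(s)\geq\delta-\epsilon_{2}\ \mathrm{and}\ |\mathrm{Im}(s)|\geq T\,\}=\emptyset.
\]
Now I set
\[
\eta\eqdf\min\!\left(\epsilon_{1},\,\epsilon_{2},\,\tfrac{\delta}{8}\right),\qquad \sigma_{0}\eqdf\delta-\eta,\qquad H\eqdf T,
\]
so that $\sigma_{0}\in(\tfrac{3}{4}\delta,\delta)$ as required by Theorem \ref{thm:rectangle-gap}.

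With these choices, I apply Theorem \ref{thm:rectangle-gap}: a.a.s.\ one has $\mathcal{R}_{X_{n}}\cap\Rect(\sigma_{0},H)=\mathcal{R}_{X}\cap\Rect(\sigma_{0},H)$ with matching multiplicities. Since $\eta\leq\epsilon_{1}$, the right-hand side reduces to $\{\delta\}$ by the choice of $\epsilon_{1}$. The region $\{\mathrm{Re}(s)\geq\delta-\eta\}$ is the disjoint union of $\Rect(\sigma_{0},H)$ and the two wedges $\{\mathrm{Re}(s)\geq\delta-\eta,\ |\mathrm{Im}(s)|>H\}$; on the latter, the high-frequency theorem guarantees no resonances of $X_{n}$ at all because $\eta\leq\epsilon_{2}$. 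Combining these two pieces, a.a.s.
\[
\mathcal{R}_{X_{n}}\cap\{\,s\,:\,\mathrm{Re}(s)\geq\delta-\eta\,\}=\{\delta\},
\]
so $X_{n}$ has a resonance-free strip of width $\eta$ below $\delta$, with $\eta$ depending only on $X$. Since a.a.s.\ also forces $X_{n}$ to be connected, this is the uniform spectral gap claimed. The ``infinite family'' conclusion is then immediate: for every sufficiently large $n$, the set of $\phi_{n}$ producing a cover $X_{n}$ with this gap has probability close to $1$ and is in particular nonempty, so one can select one representative $X_{n}$ for each such $n$.

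The main obstacle, as one can see, is not in this corollary itself---the combination is essentially a bookkeeping exercise once the two theorems and Naud's gap for $X$ are in hand---but in making sure that \emph{both} the deterministic high-frequency strip $\epsilon_{2}$ from Theorem \ref{thm:high-frequency} and the random low-frequency width $\delta-\sigma_{0}$ from Theorem \ref{thm:rectangle-gap} can be chosen uniformly in the cover; both have been arranged earlier in the paper, so the only care needed here is to take the minimum of the three scales $\epsilon_{1}, \epsilon_{2}, \tfrac{1}{4}\delta$ so that Theorem \ref{thm:rectangle-gap} applies with the given $\sigma_{0}$.
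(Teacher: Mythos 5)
Your proposal is correct and is essentially the paper's own (unwritten) argument: the corollary is obtained precisely by combining Theorem \ref{thm:rectangle-gap} (with $\sigma_{0}=\delta-\eta$, $H=T_{\Gamma}$) with Theorem \ref{thm:high-frequency}, using Naud's gap for the connected base $X$ to identify the resonances of $X$ in the rectangle with $\{\delta\}$, and your parameter bookkeeping ($\eta\leq\min(\epsilon_{1},\epsilon_{2})$ and $\eta<\tfrac{\delta}{4}$ so that $\sigma_{0}>\tfrac{3}{4}\delta$) is exactly what is needed. The only cosmetic imprecision is that $\{\Re(s)\geq\delta-\eta\}$ is not literally the union of $\Rect(\sigma_{0},H)$ and the two wedges, since it also contains $\{\Re(s)>\delta,\ |\Im(s)|\leq H\}$; but no convex co-compact surface has resonances with $\Re(s)>\delta$, so this does not affect the conclusion.
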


\begin{rem}
\label{rem:coro-remark}When $\delta>\frac{1}{2}$, Corollary \ref{cor:A.a.s.-the-random-covers-have-spectral-gap}
follows from a mild extension of \cite[Thm. 1.2]{BGS2} together with
results on random graphs as explained in $\S$\ref{subsec:Prior-work}.
However, when $\delta\leq\frac{1}{2}$, to our knowledge, Corollary
\ref{cor:A.a.s.-the-random-covers-have-spectral-gap} is completely
new: the only result of that type so far is for congruence covers
of convex co-compact subgroups of $\mathrm{SL}_{2}(\Z)$, see Oh-Winter
\cite{OW} and the discussion below.
\end{rem}

\subsection{Prior work\label{subsec:Prior-work}}

\textbf{Brooks and Makover. }Brooks and Makover in \cite{BrooksMakover}
consider a similar model for random finite area Riemann surfaces.
In this model, random surfaces are modeled by random 3-regular oriented
graphs sampled according to a refinement of the Bollobás `bin model'
introduced in \cite{Bollobas}. Then Brooks and Makover \cite{BrooksMakover}
construct from a random oriented graph on $n$ vertices a Riemann
surface $Y_{n}$, tiled by a specific hyperbolic triangle with one
vertex at $\infty$. They then consider a compactification $Y_{n}^{c}$
of the cusped surface $Y_{n}$. Thus $Y_{n}^{c}$ is a random compact
Riemann surface; the genus of $Y_{n}^{c}$ is however not deterministic\footnote{By a result of Gamburd \cite{GamburdBelyi}, if $l(Y_{n})\eqdf\frac{n}{2}+2-2\mathrm{genus}(Y_{n}^{c})$,
then as $n\to\infty$, $l(Y_{n})$ converges to a Poisson-Dirichlet
distribution. The function $l(Y_{n})$ coincides with the number of
cusps of $Y_{n}$.}. Brooks and Makover proved in \emph{(ibid.)}
\begin{thm}[Brooks-Makover]
\label{thm:Brooks-Makover}There is some constant $C>0$ such that
a.a.s. the first non-zero eigenvalue of $Y_{n}^{c}$ is $\geq C$.
\end{thm}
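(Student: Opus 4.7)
The plan is to establish a uniform lower bound on the geometric Cheeger constant $h(Y_n^c)$ of the random surface and then invoke Cheeger's inequality $\lambda_1(Y_n^c) \geq h(Y_n^c)^2 / 4$. Thus the problem reduces to showing that, a.a.s., $h(Y_n^c) \geq h_0$ for some universal $h_0 > 0$ independent of $n$.

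The strategy is a comparison between $h(Y_n^c)$ and the edge-expansion constant $h(G_n)$ of the underlying 3-regular graph. In the Brooks-Makover construction, $Y_n^c$ is obtained by gluing isometric copies of a fixed compact piece $T^c$ (a hyperbolic triangle with its cusp vertex filled in by a horocyclic cap) along prescribed sides, with one copy $T^c_v$ per vertex $v$ of $G_n$. First I would establish a combinatorial-to-geometric bound: for any $S \subset V(G_n)$ with $|S| \leq n/2$, the subset $U_S \eqdf \bigcup_{v \in S} T^c_v$ satisfies
\[
\mathrm{area}(U_S) \asymp |S|, \qquad \mathrm{length}(\partial U_S) \leq C_1 |\partial_E S|,
\]
where $\partial_E S$ denotes the edge boundary of $S$ in $G_n$. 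This direction is immediate from the bounded geometry of $T^c$.

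The converse direction is the heart of the argument. Given an arbitrary Borel set $A \subset Y_n^c$ nearly achieving the Cheeger infimum, I would pass to the combinatorial silhouette $S_A \eqdf \{v : \mathrm{area}(A \cap T^c_v) \geq \half \mathrm{area}(T^c_v)\}$ and compare the Cheeger quotient of $A$ with that of $U_{S_A}$. Using a relative isoperimetric inequality inside the single tile $T^c$, one shows $\mathrm{length}(\partial U_{S_A}) \leq C_2 \,\mathrm{length}(\partial A) + C_3 |\partial_E S_A|$ while $\mathrm{area}(A)$ and $\mathrm{area}(U_{S_A})$ agree up to a bounded factor. Combining with the previous step yields $h(Y_n^c) \geq c \cdot h(G_n)$ for a universal $c > 0$. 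Finally, one inputs the well-known expansion property of random 3-regular graphs going back to Bollob\'as: in the refined bin model used in \cite{BrooksMakover}, $h(G_n) \geq h_1 > 0$ holds a.a.s.\ for an absolute constant $h_1$. Setting $C \eqdf (c h_1)^2 / 4$ gives the theorem.

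The main obstacle is the silhouette step. An arbitrary minimizer $A$ may have its boundary winding intricately within individual tiles, and one must argue that this wiggling either contributes substantially to $\mathrm{length}(\partial A)$ (so the Cheeger quotient is large already) or else $A$ fills most tiles it touches either almost completely or almost not at all (so the silhouette is a faithful approximation). This dichotomy rests on a careful relative isoperimetric inequality for the compact piece $T^c$ relative to its gluing sides, together with a Fubini-type argument over triangles. A secondary concern is to verify that the horocyclic caps used to compactify the cusps are chosen large enough not to create thin necks that would locally violate this relative isoperimetric bound, which would otherwise destroy the comparison to $h(G_n)$.
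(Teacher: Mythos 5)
The paper does not prove this statement: it is quoted verbatim from Brooks--Makover \cite{BrooksMakover} as background, so your sketch has to be measured against their argument rather than anything in this paper.

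There is a genuine gap, and it sits exactly where you located the ``heart'' of your argument. Your reduction assumes that $Y_{n}^{c}$ is glued from isometric copies of a fixed compact hyperbolic tile $T^{c}$, ``a hyperbolic triangle with its cusp vertex filled in by a horocyclic cap''. That is not what $Y_{n}^{c}$ is. The cusped surface $Y_{n}$ is indeed tiled by isometric copies of the ideal hyperbolic triangle, one per vertex of the random $3$-regular ribbon graph; but $Y_{n}^{c}$ is obtained by filling in the punctures of the underlying \emph{conformal} structure and then passing to the uniformizing hyperbolic metric of the resulting compact Riemann surface. This metric changes globally under compactification; there is no isometric tile decomposition of $Y_{n}^{c}$ into a fixed compact piece, and no hyperbolic ``horocyclic cap'' (one cannot cap off a cusp isometrically inside hyperbolic geometry). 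Consequently both directions of your comparison $h(Y_{n}^{c})\asymp h(G_{n})$, and in particular the relative isoperimetric inequality ``inside the single tile $T^{c}$'', are assertions about a metric over which you have no a priori uniform control. The Cheeger inequality step and the a.a.s.\ expansion of random $3$-regular graphs are fine, and a discretization comparison of the type you sketch does work for the \emph{cusped} surface $Y_{n}$ (this is Brooks/Burger-type material). What is missing is the passage from $Y_{n}$ to $Y_{n}^{c}$: in Brooks--Makover this is done via Brooks' ``Platonic surfaces'' comparison theorem, which says that if all cusps are \emph{large} (each cusp has an embedded horocycle neighborhood of a definite length, pairwise disjoint), then the bottom of the spectrum of the compactification is comparable to that of the cusped surface with universal constants; and it requires a further probabilistic ingredient absent from your sketch, namely that a.a.s.\ the random surface has large cusps (a combinatorial statement about left-hand-turn paths in the random ribbon graph). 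Your closing remark about choosing the caps ``large enough'' gestures toward this issue, but it cannot be repaired within your framework, because the caps are not part of the hyperbolic structure of $Y_{n}^{c}$ at all; the large-cusp condition and the spectral comparison theorem are the substitute for them.
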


Although our main theorems deal instead with infinite area Riemann
surfaces, they offer two improvements over Theorem \ref{thm:Brooks-Makover}:
\begin{itemize}
\item The range of new forbidden eigenvalues and resonances in Theorems
\ref{thm:L2-gap} and \ref{thm:rectangle-gap} are explicit,
\item Moreover, we have an entire moduli space of random families (parameterized
by the modulus of $X$) and the range of forbidden eigenvalues and
resonances only depends on $X$ in a very mild way, through the Hausdorff
dimension of the limit set.
\end{itemize}
\textbf{The Brooks-Burger transfer principle.}\textbf{\emph{ }}Also
relevant to the current work is the following transfer principle for
small eigenvalues developed independently by Brooks and Burger in
\cite{Brooks,Burger}.
\begin{thm}[Brooks-Burger]
\label{thm:brooks-burger}Let $Y$ be any compact Riemannian manifold
with $\Gamma=\pi_{1}(Y)$. There is a constant $c(Y)>0$ and a finite
subset $S\subset\Gamma$ such that the following hold. Let $\Gamma'$
be any finite index subgroup of $\Gamma$, with associated Riemannian
covering space $Y'$ of $Y$. Let $\lambda_{1}(Y')$ be such that
$\spec(\Delta_{Y'})=\{\,0\leq\lambda_{1}\leq\lambda_{2}\leq\ldots\,\}.$
Let $G=G(\Gamma',S)$ be the Schreier coset graph of $S$ acting on
$\Gamma/\Gamma'$. Then
\begin{equation}
\lambda_{1}(Y')\geq c(Y)\left(\lambda_{0}\left(G\right)-\lambda_{1}(G)\right).\label{eq:eigenvalue-inequality}
\end{equation}
\end{thm}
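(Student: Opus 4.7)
The plan is to prove the transfer inequality by the standard discretization argument that originally underlies the Brooks--Burger correspondence: averaging a low-frequency Laplace eigenfunction on $Y'$ over the tiles of a fundamental domain produces a test vector on the Schreier graph whose combinatorial Rayleigh quotient is controlled by $\lambda_{1}(Y')$. Concretely, fix a Lipschitz fundamental domain $F\subset\tilde{Y}$ for the deck action of $\Gamma$ on the universal cover $\tilde Y$, and set
\[
S\eqdf\bigl\{\, s\in\Gamma\setminus\{e\}:s\bar{F}\cap\bar{F}\neq\emptyset\,\bigr\};
\]
this set is finite by compactness of $\bar{F}$ and is a symmetric generating set of $\Gamma$. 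Pulling $F$ down to $Y'$ gives a tiling $Y'=\bigsqcup_{[\gamma]\in\Gamma/\Gamma'}F_{[\gamma]}$ whose face adjacencies are exactly the edges of $G=G(\Gamma',S)$, with $F_{[\gamma]}$ adjacent to $F_{[s\gamma]}$ iff $s\in S$.

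For $f\in H^{1}(Y')$, define the tile average $g\colon\Gamma/\Gamma'\to\R$ by
\[
g([\gamma])=\frac{1}{\mathrm{vol}(F)}\int_{F_{[\gamma]}}f,
\]
and let $\Pi f$ be its piecewise-constant lift. The key analytic input consists of two Poincar\'e inequalities on $Y$, both with constants depending only on $Y$: one on a single tile,
\[
\|f-\Pi f\|_{L^{2}(Y')}^{2}\leq C_{1}\|\nabla f\|_{L^{2}(Y')}^{2},
\]
obtained by summing Poincar\'e on each $F_{[\gamma]}$; and one on each union $F\cup sF$ (a connected Lipschitz domain for $s\in S$) giving
\[
|g([s\gamma])-g([\gamma])|^{2}\leq C_{2}\int_{F_{[\gamma]}\cup F_{[s\gamma]}}|\nabla f|^{2}.
\]
Summing the second inequality over all edges of $G$ and using that each tile appears in at most $|S|$ overlap regions yields
\[
\sum_{\{u,v\}\in E(G)}(g(u)-g(v))^{2}\leq C_{3}\|\nabla f\|_{L^{2}(Y')}^{2},\quad C_{3}=|S|\,C_{2}.
\]

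Apply this to $f$ a $\lambda_{1}(Y')$-eigenfunction orthogonal to constants. If $\lambda_{1}(Y')\geq\frac{1}{2C_{1}}$, the inequality (\ref{eq:eigenvalue-inequality}) is trivial since the combinatorial Laplacian of an $|S|$-regular graph has spectrum in $[0,2|S|]$, so $\lambda_{0}(G)-\lambda_{1}(G)\leq 2|S|$. Otherwise the first Poincar\'e inequality gives $\|\Pi f\|^{2}\geq\frac{1}{2}\|f\|^{2}$, i.e.\ $\sum_{v}g(v)^{2}\geq\|f\|^{2}/(2\mathrm{vol}(F))$; the orthogonality $\int_{Y'}f=0$ forces $\sum_{v}g(v)=0$, and min-max for the combinatorial Laplacian yields
\[
\lambda_{0}(G)-\lambda_{1}(G)\leq\frac{\sum_{\{u,v\}\in E(G)}(g(u)-g(v))^{2}}{\sum_{v}g(v)^{2}}\leq 2C_{3}\,\mathrm{vol}(F)\,\lambda_{1}(Y'),
\]
proving the theorem with $c(Y)=\min\!\bigl(\frac{1}{2C_{3}\mathrm{vol}(F)},\frac{1}{4|S|C_{1}}\bigr)$. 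The main obstacle is producing uniform Poincar\'e constants $C_{1},C_{2}$: this forces one to choose $F$ with sufficient boundary regularity (for instance a Dirichlet fundamental polyhedron, which is piecewise smooth with controlled geometry) so that the finitely many overlap domains $F\cup sF$, $s\in S$, are connected Lipschitz (indeed John) domains whose Poincar\'e constants depend only on the geometry of the fixed base $Y$, and not on the subgroup $\Gamma'$ or the cover $Y'$.
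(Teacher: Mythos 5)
The paper does not prove this statement --- it quotes it from Brooks and Burger --- so your argument has to stand on its own. Your overall route (tile the cover by copies of a fundamental domain, project $f$ to tile averages, compare Rayleigh quotients via Poincar\'e inequalities, and split into the cases $\lambda_{1}(Y')\gtrsim1$ and $\lambda_{1}(Y')$ small) is indeed the standard discretization argument behind the Brooks--Burger principle, and the bookkeeping (the Pythagoras step $\|\Pi f\|^{2}\geq\tfrac12\|f\|^{2}$, the zero-mean condition on $g$, the min--max bound for the combinatorial Laplacian, the trivial case via $\lambda_{0}(G)-\lambda_{1}(G)\leq2|S|$) is correct as far as it goes.

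The genuine gap is your second Poincar\'e inequality. With $S=\{s\neq e:\,s\bar F\cap\bar F\neq\emptyset\}$, the set $S$ unavoidably contains elements for which $\bar F$ and $s\bar F$ meet only in a set of codimension at least $2$ (e.g.\ the tiles ``diagonally opposite'' across a vertex of a fundamental polygon for a hyperbolic surface), and for such $s$ the open set $F\cup sF$ is \emph{not} a connected Lipschitz (or John) domain --- the interior of $\bar F\cup s\bar F$ is disconnected --- so no constant $C_{2}$ exists; choosing a more regular $F$, as you suggest at the end, cannot repair this, since vertex-only contacts occur for every polyhedral fundamental domain. Moreover the inequality is false even for globally smooth $f$ on $Y'$: in the two-tiles-sharing-a-vertex picture take $f\equiv1$ on one tile and $f\equiv0$ on the other outside an $\epsilon$-ball around the common vertex, interpolate radially inside that ball with a logarithmic cutoff, and let all angular transition happen in the \emph{other} tiles of the star of the vertex; then $|g([s\gamma])-g([\gamma])|\approx1$ while $\int_{F_{[\gamma]}\cup F_{[s\gamma]}}|\nabla f|^{2}=O\bigl(1/\log(1/\epsilon)\bigr)\to0$, so the difference of averages is not controlled by the energy on those two tiles alone. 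The standard fix is available and consistent with the statement (which lets you choose $S$): either restrict $S$ to the face-pairing elements, i.e.\ those $s$ for which $\bar F\cap s\bar F$ contains a relatively open codimension-one piece --- these still generate $\Gamma$, by joining a generic interior point of $F$ to its $\gamma$-translate by a path avoiding the codimension-two skeleton --- and prove Poincar\'e on the genuinely connected unions $F\cup sF$; or keep your $S$ but estimate $|g([s\gamma])-g([\gamma])|$ by chaining through the boundedly many face-adjacent tiles in the star of the common face, at the cost of a bounded multiplicity factor in $C_{3}$. You should also justify, rather than only flag, the uniform Neumann--Poincar\'e inequality on the single tile (e.g.\ a Dirichlet domain is star-shaped about its center and has only finitely many bounding bisectors, which gives the needed regularity); with these repairs the proof goes through.
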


Theorem \ref{thm:brooks-burger} was extended to Galois covers of
non-elementary convex co-compact hyperbolic surfaces by Bourgain,
Gamburd and Sarnak in \cite[Thm. 1.2]{BGS2} where the left hand side
of (\ref{eq:eigenvalue-inequality}) is replaced by the gap between
$\delta(1-\delta)$ and the next eigenvalue of the $L^{2}$-Laplacian.
This extends to non-Galois covers and therefore applies in the setting
of this paper as follows.

Let us assume that $X_{n}=\Gamma_{n}\backslash\mathbb{H}$ is connected,
for simplicity, although the argument can be adapted to the general
case. For fixed $S\subset\Gamma$, the Schreier coset graphs $G_{n}$
of $S$ acting on $\Gamma/\Gamma_{n}\cong[n]$ are precisely the random
regular graphs of the permutation model, and a.a.s. these have a uniform
spectral gap by \cite{BroderShamir,Friedman}. Hence by the extension
of \cite[Thm. 1.2]{BGS2} the $X_{n}$ have a uniform spectral gap
between $\delta(1-\delta)$ and the next $L^{2}$-eigenvalue. Importantly,
in all versions of Theorem \ref{thm:brooks-burger}, the constant
$c$ depends on $Y$ in a complicated way. Because of this, it is
unlikely such an argument would lead to e.g. Theorem \ref{thm:L2-gap}.
However, this argument does lead to Corollary \ref{cor:A.a.s.-the-random-covers-have-spectral-gap}
when $\delta>\frac{1}{2}$ (cf. Remark \ref{rem:coro-remark}).

It is also worth mentioning that a variant of Theorem \ref{thm:brooks-burger}
has also been developed for resonances in \cite{BGS2,OW,MOW}, for
specific congruence coverings of $Y=\Gamma\backslash\mathbb{H}$ where
$\Gamma$ is an infinite index subgroup of $\SL_{2}(\Z)$. Besides
only dealing with Galois covers, the key reason that these methods
cannot prove Corollary \ref{cor:A.a.s.-the-random-covers-have-spectral-gap}
when $\delta\leq\frac{1}{2}$ is the following. The state of the art
method \cite[Appendix]{MOW} of dealing with low frequency resonances
(a la Theorem \ref{thm:rectangle-gap}) involves bounds on the dimensions
of non-trivial irreducible representations of finite groups $\mathcal{G}$
that are polynomial in $|\mathcal{G}|$. The relevant groups in our
setting are $S_{n}$, and the issue is that $S_{n}$ has non-trivial
irreducible representations of dimensions that are sub-logarithmic
in $|S_{n}|=n!$.

Finally we point out that the methods of \cite{BGS2,OW,MOW} are not
well adapted to efficiently tracking constants and hence likely not
suitable for producing explicit resonance free regions as in Theorem
\ref{thm:rectangle-gap}.

\subsection{Overview of proofs and paper organization\label{subsec:An-overview-of}}

All the proofs of the paper rely on a Schottky encoding of the action
of $\Gamma$ on $\R$ that is presented in $\S$\ref{subsec:Words,-encodings-of}.
To control resonances (and eigenvalues) we rely on the connection
between resonances and zeros of the Selberg zeta function due to Patterson
and Perry \cite{PattersonPerry}. This connection is explained in
$\S$\ref{eq:homomorphism-to-representation}. We then pass to dynamical
considerations by the relationship between Selberg zeta functions
and dynamical zeta functions explained in $\S\ref{subsec:Zeta-functions}.$
The relevant dynamical zeta functions are Fredholm determinants of
certain transfer operators on vector valued functions, twisted by
(random) unitary representations $\rho_{n}^{0}$ of $\Gamma$. These
are introduced in $\S$\ref{subsec:Functional-spaces-and}. The relevance
of these representations is that the zeros of the $\rho_{n}^{0}$-twisted
Selberg zeta function of $X$ correspond to new resonances of $X_{n}$
(see $\S$\ref{subsec:Zeta-functions}). These are precisely the objects
we wish to control.

\textbf{Theorems \ref{thm:L2-gap} and \ref{thm:rectangle-gap}. }Since
Theorem \ref{thm:rectangle-gap} implies Theorem \ref{thm:L2-gap}
it suffices to discuss the former. 

So far we have not been precise about the transfer operators we use.
To prove Theorem \ref{thm:rectangle-gap} we do not use the `standard'
twisted transfer operators used for example in \cite{BGS2,OW,MOW},
but rather, we base our twisted operators on the \emph{refined transfer
operators} introduced by Bourgain and Dyatlov in \cite{BDFourier}.
The operators are denoted by $\L_{\tau,s,\rho_{n}^{0}}$ and defined
precisely in $\S$\ref{subsec:Functional-spaces-and}. The parameter
$s$ is a frequency parameter, and the parameter $\tau$ is a `discretization
parameter' that is taken to be $n^{-\frac{2}{\delta}}$. If we do
not use this operator in the definition of the dynamical zeta function,
but rather, an iterate of the standard one, without the built in parameter
$\tau$, then one can still follow the strategy of this paper to obtain
resonance-free regions. \uline{However, these will depend on subtle
features of the graph of the pressure functional \mbox{$P(\sigma)$}
defined in \mbox{$\S$}\mbox{\ref{subsec:Words,-encodings-of}}.}
It is the use of refined transfer operators that allows us to improve
on this, and is a key idea in the paper. The functional spaces we
use are Bergman spaces, and this gives us crucial access to trace
techniques.

To control zeros of the dynamical zeta function in a rectangle, we
use Jensen's formula with a circle enclosing the rectangle (cf. Figure
\ref{fig:jensen}). The strategy is to prove that the expected number
of zeros in the region decays as a polynomial in $n$, so by Markov's
inequality, a.a.s. there are none. There are two terms in Jensen's
formula we need to control. The first is $\log|\det(1-\L_{\tau,s,\rho_{n}^{0}})|$
when $s$ is the center of the circle. As shown in Proposition \ref{prop:pointiwse-bound},
this term decays provided the center of the circle is a sufficiently
large real number, which can be arranged. The second term in Jensen's
formula is the integral over $s$ in the circle of $\log|\det(1-\L_{\tau,s,\rho_{n}^{0}}^{2})$\textbar .
A convenient property of Jensen's formula is that it is an integral
formula, and we can take expectations inside the integral. Using Weyl's
inequality, and taking expectations, we reduce to bounding the expectation
$\E_{n}\|\L_{\tau,s,\rho_{n}^{0}}\|_{\HS}^{2}$ of the squared Hilbert-Schmidt
norm of $\L_{\tau,s,\rho_{n}^{0}}$ for $s$ on the circle. We need
to prove these all decay uniformly and polynomially in $n$. This
estimate is at the core of the proof, is stated precisely in Proposition
\ref{prop:main-prob-estimate}, and its proof takes up $\S\ref{sec:The-expectation-of}$.

We now discuss the proof of Proposition \ref{prop:main-prob-estimate}.
The first step is a formula for $\E_{n}\|\L_{\tau,s,\rho_{n}^{0}}\|_{\HS}^{2}$.
This uses a deterministic expression for $\|\L_{\tau,s,\rho_{n}^{0}}\|_{\HS}^{2}$
involving a Bergman kernel and given in Lemma \ref{lem:expression-for-HS-norm}.
The formula for $\|\L_{\tau,s,\rho_{n}^{0}}\|_{\HS}^{2}$ is a complex
weighted sum of random variables $\Tr[\rho_{n}^{0}(\gamma_{\a'}\gamma_{\b'}^{-1})]$,
where $\gamma_{\a'}$ and $\gamma_{\b'}$ are elements of $\Gamma$.
By linearity of expectations we obtain an expression for $\E_{n}\|\L_{\tau,s,\rho_{n}^{0}}\|_{\HS}^{2}$
as a weighted sum of expectations
\begin{equation}
\E_{n}[\Tr(\rho_{n}^{0}(\gamma_{\a'}\gamma_{\b'}^{-1}))]\label{eq:expected-trace=000023}
\end{equation}
By passing to a majorant, in Lemma \ref{lem:maj-exp-hs-norm} we reduce
our task to estimating a sum of the form
\begin{equation}
\sum_{\a,\b\in\ZZ(\tau)}|\E_{n}[\Tr(\rho_{n}^{0}(\gamma_{\a'}\gamma_{\b'}^{-1}))]|\label{eq:exponential}
\end{equation}
where $\ZZ(\tau)$ is a set of words in the generators of $\Gamma$,
and $\a'$ is $\a$ with the last letter removed.

The strategy is to insert good bounds for (\ref{eq:expected-trace=000023})
into (\ref{eq:exponential}) to obtain the decay we want. This is
analogous to the \emph{trace method }used to bound the spectral gap
of a random graph, where $\|\L_{\tau,s,\rho_{n}^{0}}\|_{\HS}^{2}$
would be replaced by the trace of a power of the adjacency matrix.
Indeed, the bounds we use for (\ref{eq:expected-trace=000023}) go
back to the paper of Broder and Shamir \cite{BroderShamir} who used
the trace method to show that the second largest eigenvalue of a $2k$-regular
random graph in the permutation model is a.a.s. $\leq3k^{\frac{3}{4}}$.
So the appearance of $\frac{3}{4}$ in Theorems \ref{thm:L2-gap}
and \ref{thm:rectangle-gap} is similar to \emph{(ibid.).}

In \emph{(ibid.) }Broder and Shamir proved, roughly speaking, that
$\E_{n}[\Tr(\rho_{n}^{0}(\gamma))]$ has a trivial bound if $\gamma$
is the identity, a better bound if $\gamma$ is a proper power of
another element in $\Gamma$, and an even better bound if $\gamma$
does not fall in one of the previous two cases. We need a two sided
estimate for (\ref{eq:expected-trace=000023}) that can be deduced
from more recent work of Puder \cite{PUDER} and is stated in Theorem
\ref{thm:broder-shamir-puder}. According to the three cases above,
we partition the range of summation in (\ref{eq:exponential}) into
three different sets.

The hardest of these to deal with in (\ref{eq:exponential}) is the
set $\Pairs(\tau)$ that consists of $\a,\b\in\ZZ(\tau)$ such that
$\gamma_{\a'}\gamma_{\b'}^{-1}$ is a proper power in $\Gamma$. We
need to show that the contribution of this set to (\ref{eq:exponential})
has polynomial decay. We give a precise bound on $|\Pairs(\tau)|$
in Proposition \ref{prop:key-prop}; this proposition is at the core
of the paper so we now explain the ideas of its proof.

Throughout the paper we work with real quantities $\Upsilon_{\a}$,
where $\a$ is a word in the generators of $\Gamma$. These are defined
in $\S$\ref{sec:Estimates-for-Derivatives}. Roughly speaking, $\Upsilon_{\a}$
measures the size of the derivative of the associated group element
$\gamma_{\a'}$, and the set $\bar{Z}(\tau)$ is the set of words
$\a$ such that $\Upsilon_{\a}\approx\tau$. This means that estimating
$|\Pairs(\tau)|$ is roughly the same as estimating the sum
\begin{equation}
\sum_{(\a,\b)\in\Pairs(\tau)}\Upsilon_{\a}^{\frac{\delta}{2}}\Upsilon_{\b}^{\frac{\delta}{2}};\label{eq:upsilon-sum}
\end{equation}
the choice of the exponent $\frac{\delta}{2}$ optimizes the result
we can get from this method. The key combinatorial observation we
use to estimate (\ref{eq:upsilon-sum}) is that if $\gamma_{\a'}\gamma_{\b'}^{-1}$
is a proper power, after performing an absolutely bounded finite number
of the following operations
\begin{itemize}
\item cutting the sequences $\a'$ and $\b'$,
\item possibly replacing some cut sequence with its `mirror',
\item and regluing
\end{itemize}
one can form a long identical pair of sequences. This idea is performed
rigorously in $\S$\ref{subsec:Estimating-sigma2}. The result of
these operations on the $\Upsilon$ is to introduce a bounded multiplicative
constant, since $\Upsilon$ is roughly multiplicative (Lemma \ref{lem:coarse-homomorphism})
and behaves well with respect to mirrors (Lemma \ref{lem:mirror-1}).
The result of obtaining the long identical pair of sequences is that
we get bounds on (\ref{eq:upsilon-sum}) from the relationship between
sums of $\Upsilon_{\a}$ and the pressure functional (Lemma \ref{lem:Pressure-estimate}).

\textbf{Theorem \ref{thm:high-frequency}. }The proof of Theorem \ref{thm:high-frequency}
is given in $\S$\ref{sec:Unitary-representations-and}. It is based
on uniform Dolgopyat estimates for arbitrary unitary representations
of $\Gamma$. We use the main result of Bourgain and Dyatlov \cite{BDFourier}
on Patterson-Sullivan measures and Fourier decay to provide a short
and completely general proof of the uniform Dolgopyat estimates without
having to rely on the more difficult technique from \cite{NaudExpanding},
which was also used in \cite{OW,MOW}.

\subsection{Notation}

If $U\subset\C$ we write $\overline{U}$ for the closure of $U$.
We write $\N$ for the natural numbers and $\N_{0}=\N\cup\{0\}$.

\subsection{Acknowledgments}

We thank Benoît Collins and Doron Puder for helpful conversations
related to this project. Both authors thank Semyon Dyatlov for discussions
around this subject and the hospitality of IAS while attending the
conference ``Emerging Topics: Quantum Chaos and Fractal Uncertainty
Principle'' in Fall 2017. FN is supported by Institut Universitaire
de France. We thank the anonymous referee for several comments that
have improved the paper.

\section{Preliminaries}

In this paper we use the notational system for Schottky groups that
is used in the papers of Dyatlov and Bourgain \cite{BDFourier} and
Dyatlov and Zworski \cite{DZFractal} since it is very convenient
for the analysis in the sequel. We follow these papers closely in
our development.

\subsection{Words, encodings of Schottky groups, and pressure\label{subsec:Words,-encodings-of}}

Let $r\geq2$ and $\A=\{1,\ldots,2r\}$. If $a\in\A$, then we write
$\bar{a}=a+r\bmod2r$. The setup of our paper is that we are given
for each $a\in\A$ an \emph{open}\footnote{This is a difference from the notation of \cite{BDFourier} that we
make the reader aware of.} disc $D_{a}$ in $\C$ with center in $\R$. The closures of the
discs $D_{a}$ for $a\in\A$ are assumed to be disjoint from one another.
We let $I_{a}=D_{a}\cap\R$, an open interval. We write $\D=\cup_{a\in\A}D_{a}$
for the union of the discs.

We consider the usual action of $\SL_{2}(\R)$ by Möbius transformations
on the extended complex plane $\hat{\C}=\C\cup\{\infty\}$. We are
given for each $a\in\A$ a matrix $\gamma_{a}\in\SL_{2}(\R)$ with
the properties 
\[
\gamma_{a}\left(\hat{\C}-D_{\bar{a}}\right)=\overline{D_{a}},\quad\gamma_{\bar{a}}=\gamma_{a}^{-1}.
\]

\begin{figure}
\includegraphics[scale=0.75]{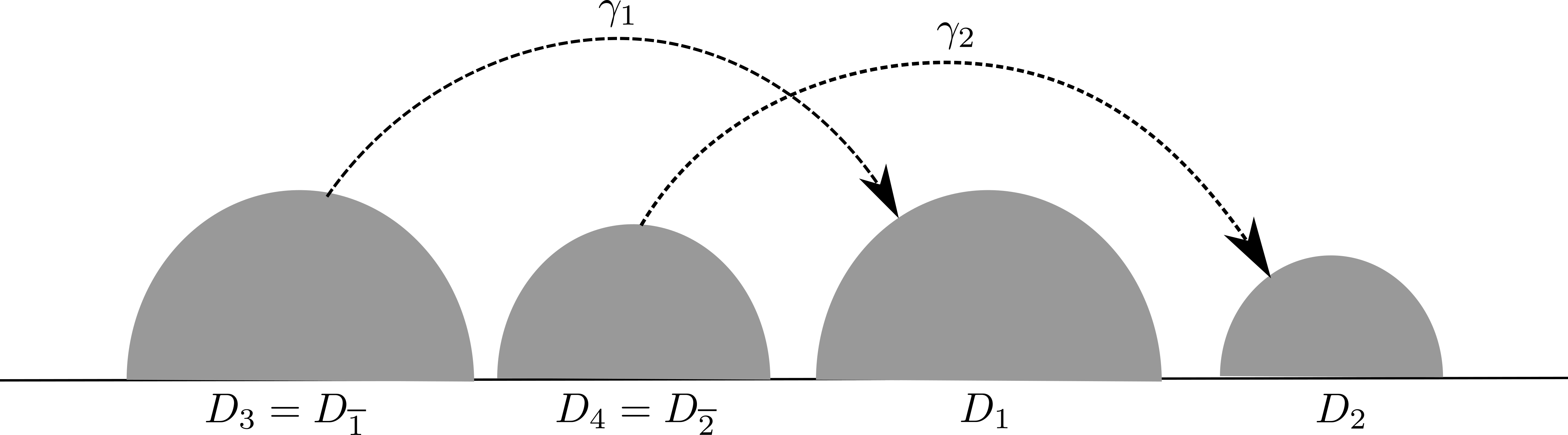}

\caption{An example of Schottky pairing with $r=2$}

\end{figure}
We write $\Gamma=\langle\gamma_{a}\,:\,a\in\A\rangle$ for the group
generated by the $\gamma_{a}$. Since the $D_{a}$ are disjoint, the
Ping-Pong Lemma shows that $\Gamma$ is a free subgroup of $\SL_{2}(\R)$.
Any group obtained by this construction is called a \emph{Schottky
group. }It is a result of Button \cite{Button} that if $X=\Gamma\backslash\mathbb{H}$
is a connected convex co-compact Riemann surface as in our main theorems,
then $\Gamma$ is a Schottky group; we now fix $\Gamma$ and assume
it arises from the above construction.

The elements of $\Gamma$ can be encoded by words in the alphabet
$\A$ as follows. A \emph{word }is a finite sequence
\[
\a=(a_{1},\ldots,a_{n}),\quad n\in\N\cup\{0\}
\]
such that $a_{i}\neq\overline{a_{i+1}}$ for $i=1,\ldots,n-1$. We
say that $n$ is the\emph{ length} of $\a$ and denote this by $|\a|=n$.
We write $\W$ for the collection of all words, $\W_{N}$ for the
words of length $N$, and $\W_{\geq N}$ for the words of length $\geq N$.
We write $\emptyset$ for the empty word and write $\W^{\circ}=\W-\{\emptyset\}$.
For $\a=(a_{1},\ldots,a_{n}),\b=(b_{1},\ldots b_{m})\in\W$ we write
\begin{itemize}
\item ${\bf a'}=(a_{1},\ldots,a_{n-1})$ if $\a=(a_{1},\ldots,a_{n})$ and
$n\geq1$.
\item $\a\to\b$ if either of $\a$ or $\b$ is empty, or else $a_{n}\neq\overline{b_{1}}$,
in which case $(a_{1},\ldots,a_{n},b_{1},\ldots,b_{m})$ is in $\W^{\circ}$
and we write $\a\b$ for this concatenation.
\item $\a\rsa\b$ if $\a,\b\in\W^{\circ}$ and $a_{n}=b_{1}$, which case
$\a'\b$ is in $\W^{\circ}$. 
\end{itemize}
If $\a=(a_{1},\ldots,a_{n})\in\W$ then we associate to $\a$ the
group element $\gamma_{\a}\eqdf\gamma_{a_{1}}\ldots\gamma_{a_{n}}$;
here $\gamma_{\emptyset}=\mathrm{id}.$ The map $\a\in\W\mapsto\gamma_{\a}\in\Gamma$
is a one-to-one encoding of $\Gamma$. We write $\overline{\a}\eqdf(\overline{a_{n}},\ldots,\overline{a_{1}})$
and call this the \emph{mirror} of $\a$. Note that $\gamma_{\overline{\a}}=\gamma_{\a}^{-1}$.
If $\a=(a_{1},\ldots,a_{n})\in\W^{\circ}$ we let 
\[
D_{\a}=\gamma_{\a'}(D_{a_{n}}),\quad I_{\a}=\gamma_{\a'}(I_{a_{n}})
\]
and write $|I_{\a}|$ for the length of the open interval $I_{\a}$.

The \emph{Bowen-Series map} $T:\D\to\hat{\C}$ is given by 
\[
T\lvert_{D_{a}}=\gamma_{a}^{-1}=\gamma_{\bar{a}}.
\]
The Bowen-Series map is eventually expanding \cite[Prop. 15.5]{Borthwick};
this will be made explicit below so we do not give the general definition
now. The limit set $\Lambda=\Lambda(\Gamma)$ of $\Gamma$, defined
in the Introduction, coincides with the non-wandering set of $T$:
\[
\Lambda(\Gamma)=\bigcap_{n=1}^{\infty}T^{-n}(\D).
\]
The limit set $\Lambda$ is a compact $T$-invariant subset of $\R$.
Given a Hölder continuous map $\varphi:\Lambda\rightarrow\R$, the
\emph{topological pressure} $P(\varphi)$ can be defined through the
variational formula: 
\[
P(\varphi)=\sup_{\mu}\left(h_{\mu}(T)+\int_{\Lambda}\varphi d\mu\right),
\]
where the supremum is taken over all $T$-invariant probability measures
on $\Lambda$, and $h_{\mu}(T)$ stands for the measure-theoretic
entropy. A celebrated result of Bowen \cite{Bowen} says that the
map 
\[
\sigma\mapsto P(-\sigma\log\vert T'\vert)
\]
is convex\footnote{Convexity follows obviously from the variational formula above. },
strictly decreasing and vanishes exactly at $\sigma=\delta(\Gamma)$,
the Hausdorff dimension of the limit set $\Lambda$. In addition,
it is not difficult to see from the variational formula that $P(-\sigma\log\vert T'\vert)$
tends to $-\infty$ as $\sigma\rightarrow+\infty$. For simplicity,
we will use the notation $P(\sigma)$ in place of $P(-\sigma\log\vert T'\vert)$.
The pressure will play a role in some of the estimates in the sequel.

\subsection{Functional spaces and transfer operators\label{subsec:Functional-spaces-and}}

Let $V$ be any Hilbert space. If $\Omega$ is any open subset of
the complex numbers $\C$, we consider the Bergman space $\H(\Omega;V)$
that is the space of $V$-valued holomorphic functions on $\Omega$
with finite norm with respect to the given inner product
\[
\langle f,g\rangle\stackrel{\df}{=}\int_{\Omega}\langle f(x),g(x)\rangle_{V}dm(x).
\]
Here $dm$ is Lebesgue measure on $\Omega$. If $V$ is separable,
then $\H(\Omega;V)$ is a separable Hilbert space; in this paper $V$
will always be finite dimensional.

Of particular interest is $\H(\D;V)$. This splits as an orthogonal
direct sum
\[
\H(\D;V)=\bigoplus_{a\in\A}\H(D_{a};V).
\]
If $\{\e_{k}\}_{k=1}^{\infty}$ is any orthonormal basis of $\H(D_{a};\C)$,
and $x_{1},x_{2}\in D_{a}$, then the sum
\[
\sum_{k=1}^{\infty}\e_{k}(x_{1})\overline{\e_{k}(x_{2})}\eqdf B_{D_{a}}(x_{1},x_{2})
\]
converges and the resulting kernel is called the \emph{Bergman kernel
of $D_{a}$. }It is given by the explicit formula (cf. \cite[pg. 378]{Borthwick})
\begin{equation}
B_{D_{a}}(x_{1},x_{2})=\frac{r_{a}^{2}}{\pi\left[r_{a}^{2}-(\overline{x}_{2}-c_{a})(x_{1}-c_{a})\right]^{2}}\label{eq:explicit-bergman}
\end{equation}
where $r_{a},c_{a}$ are the radius and center of $D_{a}$.

Throughout the sequel, $\rho:\Gamma\to\U(V)$ will be a unitary representation
of the Schottky group $\Gamma$. If $Z\subset\W^{\circ}$ is any finite
subset of words, then we define
\begin{align*}
\L_{Z,s,\rho}[f](x) & =\sum_{\substack{\a\in Z\\
\a\rsa b
}
}\gamma'_{\a'}(x)^{s}\rho(\gamma_{\a'}^{-1})f(\gamma_{\a'}(x))\quad x\in D_{b},b\in\A.
\end{align*}
The complex power $\gamma'_{\a'}(x)^{s}$ is defined by analytic continuation
using that $\gamma'_{\a'}(x)$ is positive on $I_{b}$ and never a
negative real on $D_{b}$. One has $\L_{Z,s,\rho}:\H(\D;V)\to\H(\D;V)$.
Certain particular choices of $Z$ are made throughout the paper.
The basic type of transfer operator that is considered corresponds
to the choice $Z=\W_{2}$. We write $\L_{s,\rho}\eqdf\L_{\W_{2},s,\rho}$.
This operator can be written as
\[
\L_{s,\rho}[f](x)=\sum_{\substack{a\in\A\\
a\to b
}
}\gamma'_{a}(x)^{s}\rho(\gamma_{a}^{-1})f(\gamma_{a}(x))\quad x\in D_{b},b\in\A.
\]
In the following we follow Dyatlov and Zworski \cite[\S 2.4]{DZFractal}.
\begin{defn}
A subset $Z\subset\W^{\circ}$ is a \emph{partition} if there is $N\geq0$
such that for all $\a\in\W$ with $|\a|\geq N$, there is a unique
$\b\in Z$ that is a prefix of $\a$.
\end{defn}

One particular family of partitions, introduced by Bourgain and Dyatlov
\cite{BDFourier}, plays an important role in this paper. For any
$\tau>0$ we define
\[
Z(\tau)\stackrel{\df}{=}\{\,\a\in\W^{\circ}:\,|I_{\a}|\leq\tau<|I_{\a'}|\,\}.
\]
It is shown by Dyatlov and Zworski \cite[eqs (2.7), (2.15)]{DZFractal}
that this is indeed a partition. Not only is the partition $Z(\tau)$
important to us, but so too is its mirror set
\[
\overline{Z}(\tau)\stackrel{\df}{=}\{\,\a\in\W^{\circ}:\,\overline{\a}\in Z(\tau)\,\}.
\]
The reason for introducing this mirror set is to make Lemma \ref{lem:fixed-points-of-transfer-operator-passage}
below work. Note that $\ZZ(\tau)$ may not be a partition, although
this will not matter. We write $\L_{\tau,s,\rho}\eqdf\L_{\ZZ(\tau),s,\rho}$.

\subsection{The representations appearing in this paper\label{subsec:The-representations-appearing}}

In this paper we consider particular types of representations $\rho:\Gamma\to\U(V)$
as follows. We consider $n\in\N$ and the family of symmetric groups
$S_{n}$ on $n$ letters. Let $V_{n}\eqdf\ell^{2}(\{1,\ldots,n\})$.
The group $S_{n}$ has a standard representation $\std_{n}:S_{n}\to\U(V_{n})$
where $S_{n}$ acts by precomposition on $\ell^{2}$ functions $f:\{1,\ldots,n\}\to\C$.
This representation is not irreducible, but splits as an orthogonal
direct sum $\mathbf{1}\oplus V_{n}^{0}$ where $V_{n}^{0}$ is an
irreducible representation of dimension $n-1$. We write $\std_{n}^{0}:S_{n}\to\U(V_{n}^{0})$
for the corresponding homomorphism of the symmetric group.

We now build a representation from a homomorphism $\phi_{n}:\Gamma\to S_{n}$.
Since $\Gamma$ is free, $\phi_{n}$ is described simply by choosing
the images of a generating set of $\Gamma$, which may be taken to
be the $\gamma_{a}$ with $1\leq a\leq r$. We consider 
\begin{equation}
\rho_{n}\eqdf\std_{n}\circ\phi_{n},\quad\rho_{n}^{0}\eqdf\std_{n}^{0}\circ\phi_{n}.\label{eq:homomorphism-to-representation}
\end{equation}
These depend on the choice of $\phi_{n}$. Later in the paper we will
view $\phi_{n}:\Gamma\to S_{n}$ as a random homomorphism; its law
is described by choosing the $\phi_{n}(\gamma_{a})$ with $1\leq a\leq r$
independently and uniformly at random with respect to the uniform
measure on $S_{n}$. This gives random representations $\rho_{n}$
and $\rho_{n}^{0}$. We write $\E_{n}$ to refer to expectations of
random variables with repect to the random representation $\rho_{n}^{0}$.
For example, if $\gamma\in\Gamma$, then $\Tr[\rho_{n}^{0}(\gamma)]$
is a real random variable and we write $\E_{n}(\Tr[\rho_{n}^{0}(\gamma)])$
for its expectation. At other times we view $\phi_{n}$, $\rho_{n}$,
$\rho_{n}^{0}$ as fixed and coupled to one another; it will be clear
from the context whether we make probabilistic or deterministic statements.

\subsection{Selberg zeta functions\label{subsec:Selberg-zeta-functions}}

If $X$ is any convex co-compact hyperbolic surface (not necessarily
connected), then the \emph{Selberg zeta function} of $X$ is defined
for $\mathrm{Re}(s)>\delta$ by
\[
Z_{X}(s)\eqdf\prod_{\gamma\in\P(X)}\prod_{k=0}^{\infty}\left(1-e^{-(s+k)l(\gamma)}\right)
\]
where $\P(X)$ is the collection of primitive\footnote{Primitive here means it is not an iterate of a shorter closed geodesic.}
closed geodesics on $X$, and $l(\gamma)$ is the length of such a
geodesic. The function $Z_{X}(s)$ analytically continues to an entire
function \cite{Guillope,GLZ}. One has the following theorem due to
Patterson and Perry \cite[Theorem 1.5]{PattersonPerry} relating resonances
of the Laplacian to the Selberg zeta function.
\begin{thm}[Patterson-Perry]
\label{thm:Patterson-Perry}If $X$ is any non-elementary convex
co-compact hyperbolic surface, then any resonance of $X$ is a zero
of $Z_{X}$. Conversely, if $s$ is a zero of $Z_{X}$ with $\mathrm{Re}(s)>0$
then $s$ is a resonance of $X$. In all cases, the order of the zero
of $Z_{X}$ is equal to the multiplicity of the corresponding resonance.
\end{thm}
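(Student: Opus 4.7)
The plan is to establish the correspondence through two intertwined ingredients: a Fredholm determinant representation of $Z_X$ that yields its entire continuation and identifies its divisor with the spectrum of a transfer operator, and a Guillop\'e--Zworski-style regularized trace formula that matches this divisor with the resonance set of $\Delta_X$.

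\textbf{Step 1: Zeta as a Fredholm determinant.} Using the Bowen-Series coding of \S\ref{subsec:Words,-encodings-of}, I would expand the Euler product defining $Z_X(s)$, take the formal logarithm, and resum over periodic orbits of $T$ to obtain, for $\Re(s)>\delta$,
\[
-\log Z_X(s) \;=\; \sum_{n\geq 1} \frac{1}{n}\,\trace(\L_{s,\triv}^n),
\]
where $\L_{s,\triv}$ denotes the scalar Bowen-Series transfer operator acting on the Bergman space $\H(\D;\C)$. Because $T$ is eventually expanding, standard Grothendieck--Fredholm theory on Bergman spaces shows that $\L_{s,\triv}$ is nuclear with super-exponentially decaying singular values, so $\det(1-\L_{s,\triv})$ is entire in $s$ and agrees with $Z_X(s)$ on $\Re(s)>\delta$. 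This yields the entire continuation of $Z_X$ and identifies the order of a zero at $s_0$ with the algebraic multiplicity of $1$ as an eigenvalue of $\L_{s_0,\triv}$.

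\textbf{Step 2: Matching with resonances, and the main obstacle.} I would then establish, along the lines of Guillop\'e--Zworski \cite{GZ1}, a trace formula relating $Z_X'(s)/Z_X(s)$ to a $0$-regularized trace of $R_X(s) - R_X(1-s)$ plus an explicit meromorphic correction whose divisor is supported on non-positive integers with multiplicities determined by $\chi(X)$. Regularization is necessary because $X$ has infinite area, and is implemented by comparing $R_X(s)$ with a parametrix built from the free resolvent on $\mathbb{H}$ summed over $\Gamma$-orbits, together with the Mazzeo--Melrose meromorphic continuation to handle the resolvent kernel near the boundary. Taking the residue around a point $s_0$ with $\Re(s_0)>0$, the right-hand side contributes the multiplicity of $s_0$ as a resonance (the rank of $\oint R_X(s)\,ds$), while the left-hand side contributes the order of $s_0$ as a zero of $Z_X$; the topological correction is absent in this half-plane, yielding the desired equality. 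The principal obstacle is precisely this Step 2: correctly performing the regularization so that no spurious zeros or poles are introduced outside the topological ones, and proving that every resonance genuinely produces a zero (not merely the converse), for which one typically constructs boundary eigendistributions supported on $\Lambda(\Gamma)$ via Patterson--Sullivan theory.
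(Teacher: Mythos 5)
The paper does not prove this statement: it is quoted verbatim from Patterson--Perry \cite{PattersonPerry} (Theorem 1.5 there), and for surfaces it is also available through Guillop\'e--Zworski \cite{GZ1} and the account in \cite{Borthwick}. So there is no internal argument to compare yours against; the only fair question is whether your sketch would stand as a self-contained proof, and it does not. Your two-step plan is indeed the standard route in the literature (write $Z_X$ as the Fredholm determinant of the Bowen--Series transfer operator to get the entire continuation, then identify the divisor with the resonance set via a regularized trace/factorization formula whose only other contribution is topological zeros on $-\N_{0}$ with multiplicities governed by $\chi(X)$), but Step 2 \emph{is} the theorem: establishing that trace formula with the correct regularization, showing the topological divisor is exactly as claimed and that no spurious zeros or cancellations occur, and extracting the multiplicity identity is precisely the content of \cite{PattersonPerry,GZ1}. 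Writing that you ``would establish'' it along those lines defers the entire difficulty rather than resolving it, and your closing suggestion that the implication ``resonance $\Rightarrow$ zero'' is usually handled separately by constructing Patterson--Sullivan boundary eigendistributions is not how these proofs run: once the divisor identity is in hand, both inclusions, with multiplicities, follow simultaneously.

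There is also a concrete error in Step 1. The order of vanishing of $\det(1-\L_{s,\triv})$ at $s_{0}$ is \emph{not} in general the algebraic multiplicity of the eigenvalue $1$ of the single operator $\L_{s_{0},\triv}$; for a holomorphic family it is the Gohberg--Sigal null multiplicity of $I-\L_{s,\triv}$ at $s_{0}$, which depends on the $s$-dependence and can strictly exceed the static eigenvalue multiplicity (already the $1\times1$ family $A(s)=1+(s-s_{0})^{2}$ gives a simple eigenvalue $1$ at $s_{0}$ but a double zero of $\det(1-A(s))$). Since the multiplicity statement is exactly the delicate part of the theorem you are asked to prove, this identification cannot be waved through; in the cited proofs the matching of multiplicities on both sides is carried out through this operator-valued argument principle, not through a naive eigenvalue count. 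As written, then, your proposal is a correct roadmap pointing at the literature, but both the key analytic step and the multiplicity bookkeeping are missing.
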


We will also have a use for twisted Selberg zeta functions. If $\rho:\Gamma\to\U(V)$
is any finite dimensional unitary representation of $\Gamma$ then
we let 
\[
Z_{X,\rho}(s)\eqdf\prod_{\gamma\in\P(X)}\prod_{k=0}^{\infty}\det\left(1-\rho(\gamma)e^{-(s+k)l(\gamma)}\right).
\]
This converges to a holomorphic function in $\mathrm{Re}(s)>\delta$
and extends to an entire function by results in \cite{FP}.

\section{Estimates for derivatives\label{sec:Estimates-for-Derivatives}}

The following section contains certain technical but either easy or
well-known estimates for derivatives of $\Gamma$ that will be used
in the sequel. The fundamental estimates for derivatives of elements
of $\Gamma$ are the following:
\begin{lem}
\label{lem:standard-estimates}~
\begin{description}
\item [{Uniform~contraction}] There are $C=C(\Gamma)>0$ and $0<\bar{\theta}<\theta<1$
such that for all $\a\in\W$, $b\in\A$ with $\a\to b$, and $x\in D_{b}$,
\begin{equation}
C^{-1}\bar{\theta}^{|\a|}\leq|\gamma_{\a}'(x)|\leq C\theta^{|\a|}.\label{eq:contraction}
\end{equation}
\item [{Bounded~distortion~I}] There is $K=K(\Gamma)>0$ such that for
all $b\in\A$, $\a\in\W$ such that $\a\to b$ and all $x_{1},x_{2}\in D_{b}$,
\begin{equation}
e^{-|x_{1}-x_{2}|K}\leq\frac{|\gamma_{\a}'(x_{1})|}{|\gamma_{\a}'(x_{2})|}\leq e^{|x_{1}-x_{2}|K}.\label{eq:fine-distortion}
\end{equation}
\item [{Bounded~distortion~II}] There is a constant $c=c(\Gamma)>0$
such that for $\a\in\W$, $b_{1},b_{2}\in\A$ with $\a\to b_{1},b_{2}$
and $x_{1}\in D_{b_{1}}$, $x_{2}\in D_{b_{2}}$,
\begin{equation}
\frac{|\gamma_{\a}'(x_{1})|}{|\gamma_{\a}'(x_{2})|}\leq c.\label{eq:uniform-distortion}
\end{equation}
\end{description}
\end{lem}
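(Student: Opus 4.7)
All three estimates are classical for Schottky groups; I would prove them in the stated order, bootstrapping each from the previous. The only substantive content is establishing Step~1 (the eventually expanding property of the Bowen-Series map) uniformly in $a$ and $\a$; Steps~2 and 3 are then routine distortion estimates for rational functions with a single known pole.

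\emph{Step 1 (Uniform contraction).} This is the classical statement that the Bowen-Series map $T$ is eventually expanding, the proof of which can be found in \cite{Borthwick}. The underlying mechanism is the explicit formula $|\gamma_a'(z)|=|c_a z+d_a|^{-2}$ combined with the Schottky condition that the pole $p_a=-d_a/c_a=\gamma_a^{-1}(\infty)$ lies in $\overline{D_{\bar a}}$, hence at uniform positive distance from the compact set $\bigcup_{b\neq\bar a}\overline{D_b}$; this bounds $|\gamma_a'|$ from above on the relevant set. The strict contraction factor $\theta<1$ then holds either directly for the basic operator or for some fixed iterate, by a standard compactness/hyperbolicity argument, and uniformity in $a$ is automatic because $|\A|<\infty$. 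Chain-rule iteration along $\gamma_\a=\gamma_{a_1}\circ\cdots\circ\gamma_{a_{|\a|}}$, together with the ping-pong property that keeps successive images inside the appropriate discs, yields the upper bound $|\gamma_\a'|\leq C\theta^{|\a|}$, and the lower bound follows symmetrically from the same argument applied to $\gamma_\a^{-1}=\gamma_{\bar\a}$.

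\emph{Step 2 (Bounded distortion I).} Decompose
\[
\log|\gamma_\a'(x)|=\sum_{i=1}^{|\a|}\log|\gamma_{a_i}'(y_i(x))|,\qquad y_i(x)\eqdf \gamma_{a_{i+1}}\cdots\gamma_{a_{|\a|}}(x).
\]
Each summand $\log|\gamma_{a_i}'|$ is smooth with a universally bounded Lipschitz constant on the compact discs involved, and Step~1 gives $|y_i(x_1)-y_i(x_2)|\leq C\theta^{|\a|-i}|x_1-x_2|$. Summing the resulting geometric series in the log-derivative differences and exponentiating yields the claim with $K$ equal to the total sum.

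\emph{Step 3 (Bounded distortion II).} Since $x_1,x_2$ may lie in different discs, Step~2 does not apply directly. Instead, use that as a single M\"obius transformation one has $|\gamma_\a'(z)|=|c_\a z+d_\a|^{-2}$ with unique pole $p_\a=-d_\a/c_\a=\gamma_\a^{-1}(\infty)$, which by ping-pong lies in $\overline{D_{\overline{a_{|\a|}}}}$. Hence
\[
\frac{|\gamma_\a'(x_1)|}{|\gamma_\a'(x_2)|} = \frac{|x_2-p_\a|^2}{|x_1-p_\a|^2}.
\]
Both distances $|x_i-p_\a|$ are bounded above by the diameter of $\bigcup_a\overline{D_a}$ and below by the uniform positive distance between distinct disc closures (since $b_i\neq\overline{a_{|\a|}}$), so the ratio is uniformly bounded by some constant $c=c(\Gamma)$.
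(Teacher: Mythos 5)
Your proposal is correct, and for the only part the paper proves in detail---Bounded distortion II---it takes a genuinely different route. The paper disposes of the first two items by citing \cite[\S 2]{Naud}, and then deduces (\ref{eq:uniform-distortion}) from them: writing $\a=\a' a$ with $a$ the last letter, it bounds the ratio of the single-letter factors $|\gamma_a'(x_1)|/|\gamma_a'(x_2)|$ using the two-sided estimate (\ref{eq:contraction}), and applies Bounded distortion I to $\gamma_{\a'}$ at the points $\gamma_a(x_1),\gamma_a(x_2)$, which now lie in the single disc $D_a$. You instead argue directly from the M\"obius structure: the ratio equals $|x_2-p_\a|^2/|x_1-p_\a|^2$ with the pole $p_\a=\gamma_{\overline{\a}}(\infty)\in\overline{D_{\overline{a_{|\a|}}}}$, and the hypotheses $\a\to b_1,b_2$ keep $x_1,x_2$ at uniform positive distance from that disc. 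This is correct (one should note the trivial case $\a=\emptyset$, and that $c_\a\neq 0$ for $\a\neq\emptyset$ by ping-pong, so the pole is finite), and it is arguably cleaner since it is self-contained and does not use items 1 and 2 at all, whereas the paper's deduction buys brevity given that those items are anyway quoted from the literature. Your sketches of items 1 and 2 are the standard arguments behind the cited reference, and the telescoping proof of Bounded distortion I is fine; the one phrase to tighten is that the lower bound in (\ref{eq:contraction}) ``follows symmetrically'' by applying the upper bound to $\gamma_{\overline{\a}}$: the relevant point $\gamma_{\a}(x)$ lies in $\overline{D_{a_1}}$, which is exactly the disc excluded for the word $\overline{\a}$, so the contraction estimate cannot be quoted there verbatim. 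The clean statement is the per-letter lower bound $|\gamma_{a_i}'|\geq\bar\theta>0$ on the discs avoiding the pole of $\gamma_{a_i}'$, chained along the ping-pong orbit (equivalently, per-letter upper bounds for the inverse letters evaluated along that orbit); this is a presentational point rather than a gap.
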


\begin{proof}
The first two properties can be found in \cite[\S 2]{Naud}. The last
part is trivial if $\a=\emptyset$. Otherwise, if $|\a|\geq1$ we
can write $\a=\a'a$ with $\a'\in\W$ and $\a'\to a\to b_{1},b_{2}$.
Then for $x_{i}\in D_{b_{i}}$ we have
\[
|\gamma_{\a}'(x_{i})|=\gamma_{\a'}'(\gamma_{a}(x_{i}))\gamma_{a}'(x_{i})\quad i=1,2.
\]
We have $\frac{|\gamma_{a}'(x_{1})|}{|\gamma_{a}'(x_{2})|}\leq C$
by (\ref{eq:contraction}) and since now $\gamma_{a}(x_{1})$ and
$\gamma_{a}(x_{2})$ are in $D_{a}$, (\ref{eq:fine-distortion})
gives 
\[
\frac{|\gamma_{\a'}'(\gamma_{a}(x_{1}))|}{|\gamma_{\a'}'(\gamma_{a}(x_{1}))|}\leq\exp(K\sup_{b\in\A}\mathrm{diameter}(D_{b})).
\]
The equation (\ref{eq:uniform-distortion}) now follows.
\end{proof}
In the rest of the paper, for any $\a\in\W^{\circ}$, we define
\[
\Upsilon_{\a}\eqdf|I_{\a}|.
\]
We set $\Upsilon_{\emptyset}\eqdf1$. For $\a\in\W^{\circ}$, we have
\begin{equation}
\Upsilon_{\a}\leq\Upsilon_{\a'}\label{eq:trivial-child-ineq}
\end{equation}
 since $I_{\a}\subset I_{\a'}$. Therefore there is $c=c(\Gamma)>0$
such that for any $\a\in\W$
\begin{equation}
0<\Upsilon_{\a}\leq c.\label{eq:upsilon-bounded}
\end{equation}
We next recall some useful results of Bourgain-Dyatlov from \cite[\S 2]{BDFourier}. 
\begin{lem}
\label{lem:deriv-to-upsilon}There is a constant $K_{0}=K_{0}(\Gamma)>1$
such that for any $\a=(a_{1},\ldots,a_{n})\in\W^{\circ}$ and $x\in D_{a_{n}}$
\[
K_{0}^{-1}\Upsilon_{\a}\leq|\gamma'_{\a'}(x)|\leq K_{0}\Upsilon_{\a}.
\]
\end{lem}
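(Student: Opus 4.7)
The plan is to reduce the estimate to the bounded-distortion property of Lemma \ref{lem:standard-estimates} together with the mean value theorem, using that the base intervals $I_{a_n}$ have lengths bounded above and below by constants depending only on $\Gamma$.

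First, I would unpack the definition: since $\a \in \W^\circ$ we have $\a = \a' a_n$ with $\a' \to a_n$, so
\[
\Upsilon_\a = |I_\a| = |\gamma_{\a'}(I_{a_n})|.
\]
Because $\gamma_{\a'}$ is a Möbius transformation and hence smooth and monotone on the real interval $I_{a_n}$, the mean value theorem yields some $y \in I_{a_n}$ with
\[
\Upsilon_\a = |\gamma'_{\a'}(y)| \cdot |I_{a_n}|.
\]

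Next, I would compare $|\gamma'_{\a'}(x)|$ to $|\gamma'_{\a'}(y)|$ for arbitrary $x \in D_{a_n}$. Both $x$ and $y$ lie in $D_{a_n}$, and we have $\a' \to a_n$, so the Bounded Distortion I estimate \eqref{eq:fine-distortion} from Lemma \ref{lem:standard-estimates} gives
\[
e^{-K|x-y|} \leq \frac{|\gamma'_{\a'}(x)|}{|\gamma'_{\a'}(y)|} \leq e^{K|x-y|}.
\]
Since $|x - y|$ is bounded by the diameter of $D_{a_n}$, which is in turn bounded by $M \eqdf \max_{a \in \A} \mathrm{diam}(D_a)$, the ratio $|\gamma'_{\a'}(x)|/|\gamma'_{\a'}(y)|$ lies in $[e^{-KM}, e^{KM}]$.

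Combining the two displays gives
\[
e^{-KM} \frac{\Upsilon_\a}{|I_{a_n}|} \leq |\gamma'_{\a'}(x)| \leq e^{KM} \frac{\Upsilon_\a}{|I_{a_n}|}.
\]
Finally, since $\A$ is finite and each $I_a$ is a nonempty open interval, there are constants $0 < m \leq M' < \infty$ depending only on $\Gamma$ such that $m \leq |I_{a_n}| \leq M'$ for every $a_n \in \A$. Setting $K_0 \eqdf \max(e^{KM}/m,\, M' e^{KM}) > 1$ gives the desired two-sided inequality. There is no real obstacle here; the lemma is essentially a repackaging of bounded distortion, and the only point to be careful about is that the mean value theorem is applied on the real interval $I_{a_n}$ (where $\gamma_{\a'}$ is real-valued and $|\gamma'_{\a'}|$ coincides with the ordinary derivative), not directly on the complex disc $D_{a_n}$.
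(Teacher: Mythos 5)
Your argument is correct. The paper itself disposes of this lemma essentially by citation: for $x\in I_{a_n}$ it invokes \cite[Lemma 2.5, (20)]{BDFourier}, and then passes from the interval to the whole disc $D_{a_n}$ using the bounded distortion estimate (\ref{eq:uniform-distortion}). You instead prove the real-line case from scratch: writing $\Upsilon_{\a}=|\gamma_{\a'}(I_{a_n})|=|\gamma'_{\a'}(y)|\,|I_{a_n}|$ for some $y\in I_{a_n}$ by the mean value theorem, which is legitimate because $\gamma_{\a'}$ has no pole on $\overline{D_{a_n}}$ (the standard nesting property of the Schottky discs, used implicitly throughout the paper), so it is smooth with nonvanishing derivative, hence monotone, on $I_{a_n}$, and its complex derivative restricted to $\R$ is the real derivative. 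You then perform the same interval-to-disc step as the paper, except via Bounded Distortion I (\ref{eq:fine-distortion}) applied to $x,y\in D_{a_n}$ rather than (\ref{eq:uniform-distortion}); either estimate works since here both points lie in the same disc. Finiteness of $\A$ gives the uniform two-sided bound on $|I_{a_n}|$, and your choice of $K_{0}$ is fine (enlarge it if necessary to guarantee $K_{0}>1$; the case $n=1$, $\a'=\emptyset$, is covered trivially). What the paper's route buys is brevity and alignment with the Bourgain--Dyatlov normalization it quotes repeatedly; what yours buys is a self-contained elementary proof that makes explicit the mean-value-theorem content hidden inside the cited lemma.
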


\begin{proof}
For $x\in I_{a_{n}}$ this is \cite[Lemma 2.5, (20)]{BDFourier}.
The more general result here follows by combining \cite[Lemma 2.5]{BDFourier}
with the bounded distortion estimate (\ref{eq:uniform-distortion}).
\end{proof}
The following lemma is \cite[Lemma 2.10, (30)]{BDFourier}.
\begin{lem}
\label{lem:upsilon-to-tau}There is a constant $K_{1}=K_{1}(\Gamma)>1$
such that for $\tau\in(0,1)$, for any $\a\in\ZZ(\tau)$ we have
\[
K_{1}^{-1}\tau\leq\Upsilon_{\a}\leq K_{1}\tau.
\]
\end{lem}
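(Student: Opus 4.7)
The plan is to prove $\Upsilon_{\a} \asymp \tau$ with constants depending only on $\Gamma$ by chaining two comparisons: first $\Upsilon_{\overline{\a}} \asymp \tau$, and then $\Upsilon_{\a} \asymp \Upsilon_{\overline{\a}}$.

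For the first comparison, by definition $\a \in \ZZ(\tau)$ means $\Upsilon_{\overline{\a}} \leq \tau < \Upsilon_{(\overline{\a})'}$, so it suffices to show $\Upsilon_{\overline{\a}} \asymp \Upsilon_{(\overline{\a})'}$. By Lemma~\ref{lem:deriv-to-upsilon}, $\Upsilon_{\overline{\a}} \asymp |\gamma'_{(\overline{\a})'}(x)|$ for any $x \in D_{\overline{a_1}}$. The factorization $\gamma_{(\overline{\a})'} = \gamma_{((\overline{\a})')'}\,\gamma_{\overline{a_2}}$ and the chain rule express this as $|\gamma'_{((\overline{\a})')'}(\gamma_{\overline{a_2}}(x))|\cdot|\gamma'_{\overline{a_2}}(x)|$; since $\gamma_{\overline{a_2}}(x) \in \overline{D_{\overline{a_2}}}$, Lemma~\ref{lem:deriv-to-upsilon} identifies the first factor with $\Upsilon_{(\overline{\a})'}$ up to a $\Gamma$-constant, while the second is pinned between positive $\Gamma$-constants by Lemma~\ref{lem:standard-estimates} (there are only finitely many pairs $(\overline{a_1},\overline{a_2})$ to consider).

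For the second comparison, I would use an explicit Möbius computation. Writing $\gamma_{\a} \in \SL_2(\R)$ with entries $(a,b,c,d)$, one has $\gamma'_{\a}(x) = 1/\bigl(c^2(x-p)^2\bigr)$ with pole $p = -d/c \in D_{\overline{a_n}}$, and integrating $|\gamma'_{\a}|$ over $\R\setminus I_{\overline{a_n}}$ yields
\[
\Upsilon_{\a} = \frac{2\,r_0}{c^2\bigl(r_0^2 - (c_0 - p)^2\bigr)},
\]
where $c_0,r_0$ are the center and radius of $D_{\overline{a_n}}$. The analogous formula for $\Upsilon_{\overline{\a}}$ features the same $c^2$ (since $\gamma_{\overline{\a}} = \gamma_{\a}^{-1}$ has $(2,1)$-entry $-c$) but with $p_0 = a/c \in D_{a_1}$ and $D_{a_1}$ replacing $p$ and $D_{\overline{a_n}}$. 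The ratio $\Upsilon_{\a}/\Upsilon_{\overline{\a}}$ thus reduces to a quotient of factors $r_i^2 - (c_i - q)^2$, each trivially at most $r_i^2$. For the lower bound, when $|\a|\geq 2$ the pole $p$ lies in $\overline{D_{\overline{\a}}} \subset \gamma_{\overline{a_n}}(\overline{D_{\overline{a_{n-1}}}})$, a compact subset of the open disc $D_{\overline{a_n}}$ (using disjointness of closures of base discs) and therefore at positive distance from $\partial D_{\overline{a_n}}$ depending only on $(a_n,a_{n-1})$; minimizing over the finitely many such pairs yields a $\Gamma$-uniform lower bound on $r_0^2 - (c_0-p)^2$, and symmetrically on $r_1^2 - (c_1-p_0)^2$. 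The case $|\a|=1$ is immediate since $\Upsilon_{\a}$, $\Upsilon_{\overline{\a}}$, and the admissible range of $\tau$ are then all $\Gamma$-bounded constants.

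The hard part will be the bookkeeping in the explicit integration and the geometric identification of the pole via the two-letter-prefix argument; once both comparisons are in place, $\Upsilon_{\a} \asymp \Upsilon_{\overline{\a}} \asymp \tau$, and one picks $K_1$ as the larger of the two multiplicative constants.
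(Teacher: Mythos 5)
Your argument is correct, but it takes a genuinely different route from the paper: the paper offers no proof at all here, simply quoting \cite[Lemma 2.10, (30)]{BDFourier}, whereas you give a self-contained derivation. Your two-step scheme is sound. The first comparison ($\Upsilon_{\overline{\a}}\asymp\tau$) works exactly as you say: $\Upsilon_{\overline{\a}}\le\tau<\Upsilon_{(\overline{\a})'}$ by definition of $\ZZ(\tau)$, and your chain-rule argument via Lemma \ref{lem:deriv-to-upsilon} and Lemma \ref{lem:standard-estimates} gives $\Upsilon_{\overline{\a}}\asymp\Upsilon_{(\overline{\a})'}$; note you could get this in one line from the coarse multiplicativity (\ref{eq:mult-pure}) of Lemma \ref{lem:coarse-homomorphism} applied to $\overline{\a}=(\overline{\a})'(\overline{a_{1}})$, since $\Upsilon_{(\overline{a_{1}})}$ is one of finitely many constants. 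The second comparison is precisely the mirror estimate, which the paper already records as Lemma \ref{lem:mirror-1} (again quoted from \cite{BDFourier}), so you could simply invoke it; instead you re-prove it by the explicit computation $\Upsilon_{\a}=\int_{\R\setminus I_{\overline{a_{n}}}}|\gamma_{\a}'|$, and your formula and pole location are right: $\overline{I_{\a}}=\gamma_{\a}(\hat{\R}\setminus I_{\overline{a_{n}}})$ since $\gamma_{a_{n}}(\hat{\R}\setminus I_{\overline{a_{n}}})=\overline{I_{a_{n}}}$, the pole $p=\gamma_{\overline{\a}}(\infty)$ lies in $\overline{D_{\overline{\a}}}\subset\gamma_{\overline{a_{n}}}(\overline{D_{\overline{a_{n-1}}}})$ when $|\a|\ge2$ (and symmetrically $p_{0}=\gamma_{\a}(\infty)\in\overline{D_{\a}}$), which also guarantees $c\neq0$ so the formula makes sense, and the cancellation of $c^{2}$ in the ratio together with the uniform distance of the poles from the bounding circles (finitely many letter pairs) gives the two-sided bound. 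The trade-off is clear: the paper (and the short route through Lemmas \ref{lem:coarse-homomorphism} and \ref{lem:mirror-1}) keeps everything at the level of quoted Bourgain--Dyatlov estimates, while your version makes the constant $K_{1}$ traceable directly to the Schottky data (radii, centers, and separation of the discs), at the cost of the bookkeeping you flag; with the $|\a|=1$ case disposed of as you indicate, the proof is complete.
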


The next lemma says that $\Upsilon$ is coarsely multiplicative.
\begin{lem}
\label{lem:coarse-homomorphism}There is a constant $K_{2}=K_{2}(\Gamma)>1$
such that for all $\a,\b\in\W^{\circ}$ with $\a\rsa\b$
\[
K_{2}^{-1}\Upsilon_{\a}\Upsilon_{\b}\leq\Upsilon_{\a'\b}\leq K_{2}\Upsilon_{\a}\Upsilon_{\b},
\]
and for $\a,\b\in\W$ with $\a\to\b$
\begin{equation}
K_{2}^{-1}\Upsilon_{\a}\Upsilon_{\b}\leq\Upsilon_{\a\b}\leq K_{2}\Upsilon_{\a}\Upsilon_{\b}.\label{eq:mult-pure}
\end{equation}
\end{lem}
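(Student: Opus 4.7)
The plan is to reduce both inequalities to the derivative estimate of Lemma \ref{lem:deriv-to-upsilon}, which relates $\Upsilon_{\a}$ to $|\gamma_{\a'}'(x)|$ evaluated at any point $x$ in the disc $D_{a_n}$ associated with the last letter of $\a$. The chain rule for compositions of M\"obius maps then turns the multiplicativity of $\Upsilon$ into a statement about evaluating derivatives at well-chosen points.

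I would first handle the (slightly easier) case $\a\rsa\b$. Write $\a=(a_1,\dots,a_n)$ and $\b=(b_1,\dots,b_m)$ with $a_n=b_1$; then $\a'\b\in\W^\circ$ has last letter $b_m$ and $(\a'\b)'=\a'\b'$ (where $\b'=\emptyset$ when $m=1$, a case that is trivial since $\Upsilon_\b$ is then bounded away from $0$ and $\infty$ by constants depending only on $\Gamma$). For $y\in D_{b_m}$ the chain rule gives
\[
|\gamma'_{\a'\b'}(y)|=|\gamma'_{\a'}(\gamma_{\b'}(y))|\cdot|\gamma'_{\b'}(y)|.
\]
The Schottky condition forces $\gamma_{\b'}(D_{b_m})\subset D_{b_1}=D_{a_n}$, so Lemma \ref{lem:deriv-to-upsilon} applies to each factor on the right: the first is comparable to $\Upsilon_\a$ and the second to $\Upsilon_\b$. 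Applying Lemma \ref{lem:deriv-to-upsilon} a third time to the left-hand side (which is exactly the kind of quantity that is comparable to $\Upsilon_{\a'\b}$) yields $\Upsilon_{\a'\b}\asymp\Upsilon_\a\Upsilon_\b$ with multiplicative constants that depend only on $\Gamma$.

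For $\a\to\b$, the setup is the same except that now $(\a\b)'=\a\b'$, and the chain rule decomposes $\gamma_{\a\b'}=\gamma_{\a'}\gamma_{a_n}\gamma_{\b'}$. The middle factor $\gamma_{a_n}$ maps $D_{b_1}$ into $D_{a_n}$, again by the Schottky condition together with $a_n\neq\overline{b_1}$. Evaluating at $y\in D_{b_m}$ and applying Lemma \ref{lem:deriv-to-upsilon} to each of the three derivative factors produces
\[
\Upsilon_{\a\b}\asymp\Upsilon_\a\cdot\Upsilon_{(a_n,b_1)}\cdot\Upsilon_\b,
\]
where $\Upsilon_{(a_n,b_1)}=|\gamma_{a_n}(I_{b_1})|$ is the extra factor introduced by the middle map. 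Because $(a_n,b_1)$ ranges over a finite set of admissible pairs in $\A\times\A$ and each $\Upsilon_{(a_n,b_1)}$ is strictly positive, this factor is uniformly bounded above and below by constants depending only on $\Gamma$, so it can be absorbed into $K_2$.

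The main points to be careful about are the degenerate cases $\a'=\emptyset$ or $\b'=\emptyset$ (which reduce to trivial bounds since the $\Upsilon$ of a one-letter word is a positive constant depending only on $\Gamma$), and the verification that $\gamma_{\b'}(D_{b_m})$ actually lands inside $D_{b_1}$, which is a routine induction on $|\b|$ using the ping-pong property $\gamma_a(\hat{\C}-D_{\bar a})=\overline{D_a}$. No part of the argument should be genuinely hard; the chief obstacle is only bookkeeping, to make sure that at every invocation of Lemma \ref{lem:deriv-to-upsilon} the point at which the derivative is evaluated truly lies in the disc labelled by the last letter of the word under consideration.
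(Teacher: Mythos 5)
Your argument is correct, but it takes a genuinely different route from the paper: there the two inequalities are simply quoted from Bourgain--Dyatlov (\cite[Lemmas 2.6 and 2.7]{BDFourier}), with only the degenerate case $\a=\emptyset$ or $\b=\emptyset$ of (\ref{eq:mult-pure}) handled directly. What you do instead is reprove those facts from ingredients already present in the paper: Lemma \ref{lem:deriv-to-upsilon} (comparison of $\Upsilon_{\a}$ with $|\gamma'_{\a'}(x)|$ for $x$ in the disc of the last letter), the chain rule for $\gamma_{\a'\b'}=\gamma_{\a'}\circ\gamma_{\b'}$ (resp. $\gamma_{\a\b'}=\gamma_{\a'}\circ\gamma_{a_{n}}\circ\gamma_{\b'}$), and the ping-pong nesting $\gamma_{\b'}(D_{b_{m}})\subset D_{b_{1}}$, which indeed follows by induction since the image of the open disc is open and contained in a closed disc, hence in its interior. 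The key points check out: in the $\a\rsa\b$ case the evaluation point $\gamma_{\b'}(y)$ lands in $D_{a_{n}}$ exactly because $a_{n}=b_{1}$, so three applications of Lemma \ref{lem:deriv-to-upsilon} give $\Upsilon_{\a'\b}\asymp\Upsilon_{\a}\Upsilon_{\b}$ with constant $K_{0}^{3}$; in the $\a\to\b$ case the extra middle factor $|\gamma'_{a_{n}}|$ evaluated on $D_{b_{1}}$ is bounded above and below uniformly (either via $\Upsilon_{(a_{n},b_{1})}$ over the finitely many admissible pairs, or just by compactness), so it is absorbed into $K_{2}$; and the empty-word cases are trivial since $\Upsilon_{\emptyset}=1$. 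The trade-off is clear: the paper's proof is a one-line citation, while yours is self-contained modulo Lemma \ref{lem:deriv-to-upsilon} (itself imported from \cite{BDFourier}), at the cost of the bookkeeping you describe; in effect you have re-derived the content of \cite[Lemmas 2.6--2.7]{BDFourier} from \cite[Lemma 2.5]{BDFourier} plus bounded distortion, which is a perfectly sound and arguably more transparent way to obtain the lemma.
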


\begin{proof}
The first set of inequalities is \cite[Lemma 2.7]{BDFourier}. If
either $\a$ or $\b$ is $\emptyset$, then (\ref{eq:mult-pure})
is trivially true with $K_{2}=1$. So assume $\a,\b\in\W^{\circ}$.
Then (\ref{eq:mult-pure}) follows by combining \cite[Lemmas 2.6 and 2.7]{BDFourier}. 
\end{proof}
We also have the following `mirror' estimate for $\Upsilon$.

\begin{lem}[{Mirror estimate, \cite[Lemma 2.8]{BDFourier}}]
\label{lem:mirror-1}There is a constant $K_{3}=K_{3}(\Gamma)>1$
such that for any $\a\in\W$ 
\[
K_{3}^{-1}\Upsilon_{\overline{\a}}\leq\Upsilon_{\a}\leq K_{3}\Upsilon_{\overline{\a}}.
\]
\end{lem}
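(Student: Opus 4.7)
The plan is to reduce the two-sided bound to a manifestly symmetric matrix-norm estimate
\[
\Upsilon_{\a} \asymp \|\gamma_{\a}\|^{-2},
\]
where $\|\cdot\|$ denotes the Frobenius norm on $\SL_{2}(\R)$. Since the inverse of $\begin{pmatrix}p&q\\r&s\end{pmatrix}\in\SL_{2}(\R)$ is $\begin{pmatrix}s&-q\\-r&p\end{pmatrix}$, the Frobenius norm is invariant under inversion, and so $\|\gamma_{\overline{\a}}\|=\|\gamma_{\a}^{-1}\|=\|\gamma_{\a}\|$. Applying the reduction also to $\overline{\a}$ then yields $\Upsilon_{\overline{\a}} \asymp \|\gamma_{\a}\|^{-2} \asymp \Upsilon_{\a}$, which is the lemma.

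To establish $\Upsilon_{\a}\asymp\|\gamma_{\a}\|^{-2}$, the first step is to upgrade Lemma~\ref{lem:deriv-to-upsilon} from a bound on $\gamma_{\a'}'$ at points of $D_{a_{n}}$ to a bound on $\gamma_{\a}'$ at points of $D_{b}$ for \emph{any} $b\neq\overline{a_{n}}$. Applying the chain rule to $\gamma_{\a}=\gamma_{\a'}\gamma_{a_{n}}$, for such $x$ one has $\gamma_{a_{n}}(x)\in D_{a_{n}}$ by the Schottky pairing, so Lemma~\ref{lem:deriv-to-upsilon} gives $|\gamma_{\a'}'(\gamma_{a_{n}}(x))|\asymp\Upsilon_{\a}$, while $|\gamma_{a_{n}}'(x)|$ is uniformly bounded above and below in terms of $\Gamma$ alone (since $x$ ranges over a finite union of bounded open sets at positive distance from the pole of $\gamma_{a_{n}}$). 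Hence $|\gamma_{\a}'(x)|\asymp\Upsilon_{\a}$ for $x\in D_{b}$, $b\neq\overline{a_{n}}$.

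The second step is the matrix computation. Writing $\gamma_{\a}=\begin{pmatrix}p&q\\r&s\end{pmatrix}$ with $ps-qr=1$, one has $\gamma_{\a}'(x)=(rx+s)^{-2}$. The Schottky pairing gives
\[
\gamma_{\a}^{-1}(\infty)=-s/r\in D_{\overline{a_{n}}},\qquad \gamma_{\a}(\infty)=p/r\in D_{a_{1}},
\]
so both poles lie in bounded discs, forcing $|s|,|p|\leq M|r|$ for some $M=M(\Gamma)>0$. Combined with $ps-qr=1$ and the uniform contraction estimate (\ref{eq:contraction}) (which forces $|r|$ to grow with $|\a|$), one deduces the key equivalence $\|\gamma_{\a}\|\asymp|r|$. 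For $x\in D_{b}$ with $b\neq\overline{a_{n}}$ the distance $|x+s/r|$ is bounded below by a positive constant depending only on $\Gamma$, giving $|rx+s|\asymp|r|\asymp\|\gamma_{\a}\|$ and hence $|\gamma_{\a}'(x)|\asymp\|\gamma_{\a}\|^{-2}$. Combined with the first step, $\Upsilon_{\a}\asymp\|\gamma_{\a}\|^{-2}$, as required.

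The main obstacle is precisely the uniform equivalence $\|\gamma_{\a}\|\asymp|r|$: this fails for generic matrices in $\SL_{2}(\R)$ (already for unipotents), so the argument genuinely uses the Schottky structure through the boundedness of the two discs $D_{\overline{a_{n}}}$ and $D_{a_{1}}$ containing the two poles. One must also split into the short-word case (finitely many elements, trivially handled) and the long-word case (where $|r|$ is exponentially large thanks to (\ref{eq:contraction}), and all four matrix entries are comparable to $|r|$); care is needed to make sure the multiplicative constants arising do not depend on $|\a|$.
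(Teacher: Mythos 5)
Your argument is correct, and since the paper does not prove this lemma itself but quotes it directly from \cite[Lemma 2.8]{BDFourier}, the relevant comparison is with that source, whose proof runs on the same mechanism you use: the equivalence $\Upsilon_{\a}\asymp\Vert\gamma_{\a}\Vert^{-2}$ for a matrix norm on $\SL_{2}(\R)$, combined with the invariance of that norm under inversion. The only points to tidy are cosmetic: the ping-pong argument places $\gamma_{\a}^{-1}(\infty)$ and $\gamma_{\a}(\infty)$ in the \emph{closed} discs $\overline{D_{\overline{a_{n}}}}$ and $\overline{D_{a_{1}}}$ (which suffices, and uses the reducedness of $\a$), and the lower bound $|rx+s|\geq d_{0}|r|$ for $x\in D_{b}$, $b\neq\overline{a_{n}}$, rests on the positive distance between the finitely many pairs of disjoint closed discs --- both routine details that your sketch already implicitly invokes.
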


We now state some lemmas about the set $\ZZ(\tau)$.
\begin{lem}
\label{lem:deriv-to-tau}There is a constant $C_{1}=C_{1}(\Gamma)>1$
such that for $\a=(a_{1},\ldots,a_{n})\in\ZZ(\tau)$, for any $x\in D_{a_{n}}$
we have 
\[
C_{1}^{-1}\tau\leq|\gamma'_{\a'}(x)|\leq C_{1}\tau.
\]
\end{lem}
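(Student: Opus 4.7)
The plan is to deduce this lemma by chaining together the two immediately preceding results, which together bridge between the parameter $\tau$, the interval length $\Upsilon_{\a}$, and the derivative $|\gamma'_{\a'}(x)|$.

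First, since $\a \in \ZZ(\tau)$ by assumption, Lemma~\ref{lem:upsilon-to-tau} applies directly and gives
\[
K_1^{-1} \tau \leq \Upsilon_{\a} \leq K_1 \tau.
\]
Second, since $\a = (a_1, \ldots, a_n) \in \W^{\circ}$ and $x \in D_{a_n}$, the hypotheses of Lemma~\ref{lem:deriv-to-upsilon} are met and we obtain
\[
K_0^{-1} \Upsilon_{\a} \leq |\gamma'_{\a'}(x)| \leq K_0 \Upsilon_{\a}.
\]
Multiplying these two double inequalities through yields
\[
\frac{\tau}{K_0 K_1} \leq |\gamma'_{\a'}(x)| \leq K_0 K_1 \tau,
\]
so one may take $C_1 \eqdf K_0 K_1$, a constant that depends only on $\Gamma$ through $K_0$ and $K_1$.

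There is no genuine obstacle here: the content of the lemma is essentially a translation of the geometric defining condition of $\ZZ(\tau)$, phrased in terms of interval lengths, into an analytic statement about derivatives on the discs $D_{a_n}$. The bounded-distortion estimates of Lemma~\ref{lem:standard-estimates} are what originally make such a translation possible, but these have already been encapsulated in Lemmas~\ref{lem:deriv-to-upsilon} and~\ref{lem:upsilon-to-tau}, so no additional work is needed. The interest of this reformulation lies rather in its later use: the transfer operators $\L_{\tau,s,\rho}$ involve complex powers $\gamma'_{\a'}(x)^s$ summed over $\a \in \ZZ(\tau)$, and this lemma allows one to replace each such factor by a uniform power of $\tau$ up to multiplicative constants, which is precisely the form required by the estimates in $\S$\ref{sec:The-expectation-of}.
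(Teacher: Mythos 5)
Your proof is correct and is essentially identical to the paper's own argument, which likewise obtains the result by combining Lemma \ref{lem:deriv-to-upsilon} with Lemma \ref{lem:upsilon-to-tau} (with the constant $C_{1}=K_{0}K_{1}$). Nothing is missing.
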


\begin{proof}
This follows by combining Lemmas \ref{lem:deriv-to-upsilon} and \ref{lem:upsilon-to-tau}.
\end{proof}
Given Lemma \ref{lem:deriv-to-tau}, we can make the following estimate
on the word lengths of elements $\a\in\ZZ(\tau).$
\begin{lem}
\label{lem:tau-to-word-length}There are constants $D=D(\Gamma)>1$
and $\kappa=\kappa(\Gamma)>0$ such that if $\a\in\ZZ(\tau)$, then
\[
D^{-1}\log\tau^{-1}-\kappa\leq|\a|\leq D\log\tau^{-1}+\kappa.
\]
\end{lem}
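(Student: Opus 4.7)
The plan is to extract the bound on $|\a|$ by chaining two estimates already proved in the excerpt: Lemma \ref{lem:deriv-to-tau}, which says $|\gamma'_{\a'}(x)|$ is comparable to $\tau$ when $\a \in \ZZ(\tau)$, and the uniform contraction estimate in Lemma \ref{lem:standard-estimates}, which says $|\gamma'_{\a'}(x)|$ is comparable (up to multiplicative $C$) to $\theta^{|\a'|}$ from above and $\bar{\theta}^{|\a'|}$ from below. The two together force $|\a'|$ into an interval of width $O(1)$ around $\log \tau^{-1}$ divided by a constant, which is exactly what the lemma asks for.

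More concretely, fix $\a=(a_1,\dots,a_n) \in \ZZ(\tau)$ and pick any $x \in D_{a_n}$. Combining the two lemmas (valid since $\a' \to a_n$, so the contraction estimate applies) gives
\[
C^{-1} \bar{\theta}^{|\a'|} \leq |\gamma'_{\a'}(x)| \leq C \theta^{|\a'|},
\qquad
C_1^{-1} \tau \leq |\gamma'_{\a'}(x)| \leq C_1 \tau.
\]
From the lower inequality for $|\gamma'_{\a'}(x)|$ and the upper inequality for it (in terms of $\tau$), I get $C^{-1} \bar{\theta}^{|\a'|} \leq C_1 \tau$, i.e.\
\[
|\a'| \geq \frac{\log \tau^{-1} - \log(C C_1)}{\log \bar{\theta}^{-1}}.
\]
Symmetrically, $C_1^{-1} \tau \leq C \theta^{|\a'|}$ yields
\[
|\a'| \leq \frac{\log \tau^{-1} + \log(C C_1)}{\log \theta^{-1}}.
\]
Since $|\a| = |\a'|+1$, choosing $D := \max\bigl(\log \bar\theta^{-1},\, (\log \theta^{-1})^{-1}\bigr)$ and $\kappa$ larger than $\log(C C_1)/\log \theta^{-1} + 1$ (and an analogous lower-side constant) delivers both inequalities in the statement.

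The one edge case to record is when $\a'=\emptyset$, so that $\gamma_{\a'}=\mathrm{id}$ and $|\gamma'_{\a'}(x)|=1$; then $\a \in \ZZ(\tau)$ forces $\tau \in [C_1^{-1}, C_1]$, so $\log \tau^{-1}$ is bounded and the inequalities hold trivially after enlarging $\kappa$. No step is really an obstacle here: the lemma is a routine logarithmic bookkeeping consequence of the two preceding lemmas, and the only subtlety is being careful that $\bar\theta < \theta$ (so $\log \bar\theta^{-1} > \log \theta^{-1} > 0$) when choosing $D$ to dominate both the upper and lower estimates simultaneously.
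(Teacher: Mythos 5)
Your proof is correct and is essentially the paper's own argument: both combine Lemma \ref{lem:deriv-to-tau} with the uniform contraction estimate (\ref{eq:contraction}) to sandwich $\tau$ between $C_1^{-1}C^{-1}\bar{\theta}^{|\a'|}$ and $CC_1\theta^{|\a'|}$, then take logarithms and use $|\a|=|\a'|+1$. Your extra remarks (the empty-word case and the choice of $D$, $\kappa$) are just the bookkeeping the paper leaves implicit.
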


\begin{proof}
Write $\a=(a_{1},\ldots,a_{n})$. Pick $x\in D_{a_{n}}$. By Lemma
\ref{lem:deriv-to-tau} we have
\[
C_{1}^{-1}\tau\leq|\gamma'_{\a'}(x)|\leq C_{1}\tau,
\]
and combining this with (\ref{eq:contraction}) gives
\[
C_{1}^{-1}C^{-1}\bar{\theta}^{|\a'|}\leq\tau\leq CC_{1}\theta^{|\a'|}.
\]
Since $|\a|=|\a'|+1$, this gives the result after taking logarithms
and rearranging.
\end{proof}
We now note
\begin{lem}
\label{lem:Zbar-is-good}There is $0<\tau_{0}<1$ such that for $\tau<\tau_{0}$,
$\ZZ(\tau)\subset\W_{\geq2}$.
\end{lem}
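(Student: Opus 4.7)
The plan is to deduce the claim immediately from the word-length bound already established in Lemma~\ref{lem:tau-to-word-length}, applied to the mirror. Specifically, I would take an arbitrary $\a \in \ZZ(\tau)$, so by definition $\overline{\a} \in Z(\tau)$, and then invoke Lemma~\ref{lem:tau-to-word-length} to obtain
\[
|\overline{\a}| \geq D^{-1}\log\tau^{-1} - \kappa.
\]
Since mirroring preserves word length, $|\a| = |\overline{\a}|$, so the same lower bound holds for $|\a|$.

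The next step is simply to choose $\tau_0$ small enough that this lower bound forces $|\a| \geq 2$. Concretely, pick any $\tau_0 \in (0,1)$ with
\[
\tau_0 < \exp\bigl(-D(2+\kappa)\bigr).
\]
Then for every $\tau < \tau_0$ and every $\a \in \ZZ(\tau)$, we have $|\a| \geq D^{-1}\log\tau^{-1} - \kappa > 2 - \epsilon$ for small $\epsilon$, and since $|\a|$ is an integer this yields $|\a| \geq 2$, i.e.\ $\a \in \W_{\geq 2}$.

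There is no real obstacle here; the only minor point to check is that Lemma~\ref{lem:tau-to-word-length} was stated for elements of $Z(\tau)$ rather than $\ZZ(\tau)$, but this is handled for free by the observation $|\a| = |\overline{\a}|$. (Alternatively, one could argue directly: a length-$1$ word $\a=(a_1)$ satisfies $\Upsilon_\a = |I_{a_1}| \geq c_0 := \min_{a\in\A}|I_a| > 0$, and combining Lemmas~\ref{lem:upsilon-to-tau} and~\ref{lem:mirror-1} gives $\Upsilon_\a \leq K_1 K_3 \tau$ whenever $\a \in \ZZ(\tau)$, which is incompatible with $\Upsilon_\a \geq c_0$ once $\tau < c_0/(K_1 K_3)$.) Either route delivers the claim in a few lines.
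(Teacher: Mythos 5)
Your proposal is correct and follows the paper's own route: the lemma is deduced directly from the word-length bound of Lemma~\ref{lem:tau-to-word-length} by taking $\tau_0$ small enough that the lower bound on $|\a|$ exceeds $1$. The only superfluous step is the passage to the mirror, since Lemma~\ref{lem:tau-to-word-length} is already stated for $\a\in\ZZ(\tau)$ rather than $Z(\tau)$, so the bound applies to $\a$ directly.
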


\begin{proof}
This is a direct consequence of Lemma \ref{lem:tau-to-word-length}.
\end{proof}
\emph{Throughout the sequel, $\tau_{0}$ will always be the parameter
given by Lemma \ref{lem:Zbar-is-good}}. It will also be useful to
know roughly how many elements there are in $\ZZ(\tau)$. This is
given by \cite[Lemma 2.13]{BDFourier} (noting that $|\ZZ(\tau)|=|Z(\tau)|$).
\begin{lem}
\label{lem:size-of-partition}There is $C_{2}=C_{2}(\Gamma)>1$ such
that for $\tau\in(0,1]$ 
\[
C_{2}^{-1}\tau^{-\delta}\leq|\ZZ(\tau)|\leq C_{2}\tau^{-\delta}.
\]
\end{lem}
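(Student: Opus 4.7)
The plan is to interpret the cardinality of $\ZZ(\tau)$ via a conformal measure on $\Lambda$, after decomposing $\Lambda$ according to the partition $Z(\tau)$. Since the mirror involution $\a\mapsto\overline{\a}$ is a bijection of $\W^{\circ}$ sending $Z(\tau)$ onto $\ZZ(\tau)$, it suffices to prove $|Z(\tau)|\asymp\tau^{-\delta}$. I would then establish that $\Lambda$ decomposes as the disjoint union $\Lambda=\bigsqcup_{\a\in Z(\tau)}(I_{\a}\cap\Lambda)$: disjointness of the $I_{\a}$ follows from the Schottky disjointness of the discs $\{D_{a}\}_{a\in\A}$, since two distinct $\a,\b\in Z(\tau)$ sharing longest common prefix $(a_{1},\ldots,a_{k})$ place $D_{\a}$ and $D_{\b}$ inside $\gamma_{a_{1}}\cdots\gamma_{a_{k}}$ applied to two disjoint Schottky discs; coverage follows from the symbolic coding of each $x\in\Lambda$ together with the defining partition property of $Z(\tau)$ and Lemma \ref{lem:tau-to-word-length} (which ensures the prefix is eventually long enough that $Z(\tau)$ catches it).

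Next I would invoke the Patterson--Sullivan probability measure $\mu$ on $\Lambda$, which is $\delta$-conformal for $\Gamma$, meaning $\mu(\gamma_{\a'}(E))=\int_{E}|\gamma_{\a'}'|^{\delta}\,d\mu$ for any Borel set $E\subset D_{a_{n}}$. Taking $E=I_{a_{n}}\cap\Lambda$, combining with Lemma \ref{lem:deriv-to-upsilon} to replace $|\gamma_{\a'}'|$ by $\Upsilon_{\a}$ (and using bounded distortion to trivialize the integrand), and noting $\mu(I_{a}\cap\Lambda)\asymp 1$ uniformly in $a\in\A$ since $\A$ is finite and all these masses are positive, one obtains the Ahlfors regularity estimate
\[
\mu(I_{\a}\cap\Lambda)\asymp\Upsilon_{\a}^{\delta}\qquad\text{uniformly for }\a\in\W^{\circ}.
\]

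Summing this over the partition gives $1=\mu(\Lambda)\asymp\sum_{\a\in Z(\tau)}\Upsilon_{\a}^{\delta}$, and by Lemma \ref{lem:upsilon-to-tau} each $\Upsilon_{\a}$ with $\a\in Z(\tau)$ is comparable to $\tau$, so the sum equals $|Z(\tau)|\tau^{\delta}$ up to multiplicative constants, yielding $|Z(\tau)|\asymp\tau^{-\delta}$ as required. The only nontrivial ingredient is the existence and Ahlfors regularity of $\mu$, which is classical; alternatively one could bypass the measure and argue directly via the thermodynamic formalism, using Bowen's identity $P(\delta)=0$ recalled in \S\ref{subsec:Words,-encodings-of} together with the Ruelle--Perron--Frobenius theorem for the transfer operator at parameter $\delta$ to obtain the uniform boundedness of $\sum_{\a\in Z(\tau)}\Upsilon_{\a}^{\delta}$.
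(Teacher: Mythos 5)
Your argument is correct, but it is worth noting that the paper does not actually prove this lemma: it simply cites \cite[Lemma 2.13]{BDFourier} together with the trivial observation $|\ZZ(\tau)|=|Z(\tau)|$. What you have written is essentially a self-contained reconstruction of the argument behind that citation: the decomposition $\Lambda=\bigsqcup_{\a\in Z(\tau)}(I_{\a}\cap\Lambda)$ (disjointness because no element of $Z(\tau)$ is a prefix of another --- which follows from $|I_{\b'}|>\tau\geq|I_{\a}|$ whenever $\a$ were a proper prefix of $\b$, and your longest-common-prefix/Schottky-disc argument handles the remaining case), covering by the partition property, and then $\delta$-conformality of the Patterson--Sullivan measure plus Lemma \ref{lem:deriv-to-upsilon} to get $\mu(I_{\a}\cap\Lambda)\asymp\Upsilon_{\a}^{\delta}$, i.e.\ Ahlfors--David regularity of $\Lambda$. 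Two small points: Lemma \ref{lem:upsilon-to-tau} is stated for $\a\in\ZZ(\tau)$, so to apply it to $\a\in Z(\tau)$ you should invoke the mirror estimate (Lemma \ref{lem:mirror-1}), or argue directly from the definition of $Z(\tau)$; and your alternative thermodynamic route would need a \emph{two-sided} version of the pressure estimate (Lemma \ref{lem:Pressure-estimate} as stated gives only an upper bound), which is why the measure-theoretic route you take as the main argument is the cleaner one and is indeed the one used in the cited source.
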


To conclude this section, we record that certain sums of derivatives
are related to the pressure functional. 
\begin{lem}
\label{lem:Pressure-estimate}For all $\sigma_{1},Q\in\R$ such that
$0\leq\sigma_{1}<Q$ there is a constant $C=C(\sigma_{1},Q)>0$ such
that for all $N\in\N_{0}$ and $\sigma\in[\sigma_{1},Q]$ we have
\begin{equation}
\sum_{a\in\A}\sum_{\substack{\a\in\W_{N}\\
\a\rsa a
}
}\sup_{I_{a}}|\gamma_{\a'}|^{\sigma}\leq C\exp(NP(\sigma_{1})),\label{eq:pressure-est-1}
\end{equation}
and
\begin{equation}
\sum_{\a\in\W_{N}}\Upsilon_{\a}^{\sigma}\leq C\exp(NP(\sigma_{1})).\label{eq:pressure-est-2}
\end{equation}
\end{lem}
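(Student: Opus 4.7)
The plan is to reduce both estimates to a single pressure bound at the endpoint $\sigma = \sigma_1$, obtained from the spectral theory of the Ruelle transfer operator for the Bowen-Series map. First I would use the uniform contraction estimate (\ref{eq:contraction}) to pick $N_0 = N_0(\Gamma)$ such that for all $N \geq N_0$, every $\a \in \W_N$ and $a \in \A$ with $\a \rsa a$ satisfy $\sup_{D_a} |\gamma'_{\a'}| \leq 1$. Then for any $\sigma \in [\sigma_1, Q]$ one has $|\gamma'_{\a'}|^\sigma \leq |\gamma'_{\a'}|^{\sigma_1}$ pointwise on $I_a$, so the left-hand side of (\ref{eq:pressure-est-1}) at exponent $\sigma$ is dominated by the same sum at exponent $\sigma_1$. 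The finitely many values $N < N_0$ contribute at most a $(\sigma_1, Q)$-dependent constant, absorbed into $C$.

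Next I would establish (\ref{eq:pressure-est-1}) at $\sigma = \sigma_1$ by standard thermodynamic formalism. The Bowen-Series map $T$ acting on $\Lambda$ is a uniformly expanding Markov map with finite Markov partition $\{I_a\}_{a\in\A}$, and $-\sigma_1 \log |T'|$ is H\"older. The Ruelle transfer operator
\[
\mathcal{M}_{\sigma_1} f(y) = \sum_{T(x) = y} |T'(x)|^{-\sigma_1} f(x)
\]
acting on H\"older functions on $\bigsqcup_a I_a$ has, by Ruelle-Perron-Frobenius, spectral radius $e^{P(\sigma_1)}$ with a strictly positive leading eigenfunction bounded above and below; consequently $\|\mathcal{M}_{\sigma_1}^{N-1}\mathbf{1}\|_\infty \leq C\, e^{(N-1)P(\sigma_1)}$. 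Unwinding the definition,
\[
\mathcal{M}_{\sigma_1}^{N-1} \mathbf{1}(y) = \sum_{\substack{\a \in \W_N \\ \a \rsa a}} |\gamma'_{\a'}(y)|^{\sigma_1}, \qquad y \in I_a,
\]
and passing to the supremum on $I_a$ via the bounded distortion estimate (\ref{eq:fine-distortion}), then summing over $a \in \A$, yields (\ref{eq:pressure-est-1}) at $\sigma_1$. The bound (\ref{eq:pressure-est-2}) then follows by Lemma \ref{lem:deriv-to-upsilon}, which gives $\Upsilon_\a^\sigma \leq K_0^Q \sup_{I_{a_n}} |\gamma'_{\a'}|^\sigma$ for $\a = (a_1, \ldots, a_n) \in \W^\circ$ and $\sigma \in [\sigma_1, Q]$; the case $\a = \emptyset$ is trivial since $\Upsilon_\emptyset = 1$.

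The main obstacle is the pressure estimate at $\sigma_1$: the upper bound $\|\mathcal{M}_{\sigma_1}^{N-1}\mathbf{1}\|_\infty \leq C\, e^{(N-1)P(\sigma_1)}$ rests on the Ruelle-Perron-Frobenius spectral gap for transfer operators of expanding Markov maps, which is standard but nontrivial input. (One cannot obtain it purely from the submultiplicativity given by Lemma \ref{lem:coarse-homomorphism}, since subadditivity of $\log A_N(\sigma_1)$ yields only the matching lower bound $A_N(\sigma_1) \geq c\, e^{NP(\sigma_1)}$.) Once the spectral gap is invoked, the monotonicity reduction over $\sigma$ and the passage between $|\gamma'_{\a'}|$ and $\Upsilon_\a$ via bounded distortion are routine.
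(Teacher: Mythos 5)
Your proposal is correct and follows essentially the paper's route: the paper's proof consists of citing the standard pressure estimate \cite[Lemma 3.1]{Naud} for (\ref{eq:pressure-est-1}) -- which is established exactly as you do, by dominating $\mathbf{1}$ by the strictly positive Ruelle--Perron--Frobenius eigenfunction of the Bowen--Series transfer operator (note that only the leading eigenfunction and positivity of the operator are needed, not the full spectral gap) -- and then deducing (\ref{eq:pressure-est-2}) from (\ref{eq:pressure-est-1}) via Lemma \ref{lem:deriv-to-upsilon}, just as you do. Your reduction from $\sigma\in[\sigma_{1},Q]$ to the endpoint $\sigma_{1}$ using uniform contraction (with the finitely many small $N$ absorbed into $C$) is a clean way to make the constant uniform in $\sigma$, and your bookkeeping identifying the sum over $\a\in\W_{N}$ with $\a\rsa a$ with the $(N-1)$st iterate applied to $\mathbf{1}$ is accurate.
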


\begin{proof}
The estimate (\ref{eq:pressure-est-1}) is a standard estimate that
appears in \cite[Lemma 3.1]{Naud}. The estimate (\ref{eq:pressure-est-2})
follows by combining (\ref{eq:pressure-est-1}) with Lemma \ref{lem:deriv-to-upsilon}
and increasing $C$.
\end{proof}

\section{Transfer operators and zeta functions}

\subsection{Zeta functions\label{subsec:Zeta-functions}}
\begin{lem}
For any $Z\subset\W_{\geq2}$, and any finite dimensional unitary
representation $\rho$ of $\Gamma$, the operator $\L_{Z,s,\rho}$
is trace class on $\H(\D;V)$.
\end{lem}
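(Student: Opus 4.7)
The plan is to split $\L_{Z,s,\rho}$ into the finite sum of its individual summands and show each is separately trace class. For each $\a = (a_1,\ldots,a_n) \in Z$, denote by $M_\a$ the single-word operator on $\H(\D;V)$ given on $x \in D_{a_n}$ by $f \mapsto \gamma'_{\a'}(x)^{s} \rho(\gamma_{\a'}^{-1}) f(\gamma_{\a'}(x))$ (and vanishing on the other components $\H(D_b;V)$ for $b \neq a_n$). Using the orthogonal decomposition $\H(\D;V) = \bigoplus_{a\in\A}\H(D_a;V)$, it suffices to show each $M_\a$ is trace class; and since the image of $\gamma_{\a'}$ will be shown to lie in a single disc, only the matrix component $\H(D_{a_1};V) \to \H(D_{a_n};V)$ of $M_\a$ is nonzero.

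The geometric heart of the argument is the strict containment $\gamma_{\a'}(\overline{D_{a_n}}) \subset D_{a_1}$. This is exactly where the hypothesis $Z \subset \W_{\geq 2}$ enters, ensuring $|\a'| \geq 1$. It follows inductively from the Schottky condition: since $\gamma_a$ is a Möbius transformation mapping $\hat{\C}\setminus D_{\bar a}$ onto $\overline{D_a}$ and $\partial D_{\bar a}$ onto $\partial D_a$, it actually carries the open set $\hat{\C}\setminus\overline{D_{\bar a}}$ onto the open disc $D_a$. The word condition $a_{i-1}\neq\overline{a_i}$ together with pairwise disjointness of the closed discs lets us iterate this fact along $\a'$. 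Given the strict containment, $M_\a|_{\H(D_{a_1};V)}$ factorises as a composition operator $C_\phi: \H(D_{a_1};V) \to \H(D_{a_n};V)$ with $\phi = \gamma_{\a'}|_{D_{a_n}}$, followed by multiplication by the holomorphic function $\gamma'_{\a'}(\cdot)^{s}$ on $D_{a_n}$, and application of the unitary $\rho(\gamma_{\a'}^{-1})$ on the finite dimensional fibres. Boundedness of $\gamma'_{\a'}(\cdot)^{s}$ on $D_{a_n}$ uses that the unique pole of $\gamma_{\a'}$ lies in $\overline{D_{\bar a_{n-1}}}$, disjoint from $\overline{D_{a_n}}$, so the analytic continuation defined in $\S$\ref{subsec:Functional-spaces-and} is continuous on the compact set $\overline{D_{a_n}}$.

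The last two factors are bounded, and since $V$ is finite dimensional we may tensor it out, reducing the problem to showing the scalar composition operator $C_\phi: \H(D_{a_1}) \to \H(D_{a_n})$ is trace class. This is a classical consequence of the compact containment $\phi(\overline{D_{a_n}}) \subset D_{a_1}$: using an orthonormal monomial basis $\{\e_k\}_{k\geq 0}$ of $\H(D_{a_1})$ centred at $c_{a_1}$ (as suggested by the explicit Bergman kernel (\ref{eq:explicit-bergman})), the images $C_\phi(\e_k)$ satisfy $\|C_\phi\e_k\|_{\H(D_{a_n})} = O\bigl((\rho_0/r_{a_1})^k\bigr)$ with $\rho_0 := \max_{x\in\overline{D_{a_n}}}|\phi(x)-c_{a_1}| < r_{a_1}$. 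Since $\rho_0/r_{a_1} < 1$, this geometric decay of singular values yields summability and hence the trace class property. The main obstacle is this last singular-value estimate; it is standard in the Bergman space literature, and all remaining steps are essentially bookkeeping.
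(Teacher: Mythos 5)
Your argument is correct and is essentially the paper's proof: the paper simply invokes \cite[Lemma 15.7]{Borthwick}, whose proof is exactly your per-word factorisation through the compact containment $\gamma_{\a'}(\overline{D_{a_n}})\subset D_{a_1}$ and geometric decay on the monomial Bergman basis, with the hypothesis $Z\subset\W_{\geq2}$ entering precisely as you say, to exclude identity summands. (Only cosmetic remarks: the basis estimate carries an extra harmless factor $\sqrt{k+1}$, and the quantities $\Vert C_{\phi}\e_{k}\Vert$ are not literally singular values, though summing them does bound the trace norm since $\sum_{k}\Vert T\e_{k}\Vert<\infty$ for an orthonormal basis implies trace class.)
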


\begin{proof}
The proof is an easy adaptation of \cite[Lemma 15.7]{Borthwick}.
The condition $Z\subset\W_{\geq2}$ rules out $\L_{Z,s,\rho}$ having
any summand that acts as the identity on some $D_{a}$.
\end{proof}
\begin{cor}
\label{cor:trace-class-operators}Let $(\rho,V)$ be any finite dimensional
unitary representation of $\Gamma$.
\begin{enumerate}
\item The operator $\L_{s,\rho}$ is trace class on $\H(\D;V)$.
\item For $\tau<\tau_{0}$, the operator $\L_{\tau,s,\rho}$ is trace class
on $\H(\D;V)$.
\end{enumerate}
\end{cor}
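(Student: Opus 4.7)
The plan is to reduce both parts of the corollary directly to the preceding lemma, which asserts trace-class-ness of $\L_{Z,s,\rho}$ for any $Z\subset\W_{\geq 2}$. So the only thing to verify in each case is that the relevant indexing set of words is contained in $\W_{\geq 2}$.

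For part (1), recall that $\L_{s,\rho}$ was defined in $\S\ref{subsec:Functional-spaces-and}$ as $\L_{s,\rho}\eqdf\L_{\W_2,s,\rho}$. Since $\W_2\subset\W_{\geq 2}$ tautologically, the preceding lemma applies directly and gives that $\L_{s,\rho}$ is trace class on $\H(\D;V)$.

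For part (2), we have $\L_{\tau,s,\rho}\eqdf\L_{\ZZ(\tau),s,\rho}$. The hypothesis $\tau<\tau_0$ is exactly what is needed to invoke Lemma \ref{lem:Zbar-is-good}, which gives $\ZZ(\tau)\subset\W_{\geq 2}$. Applying the preceding lemma with $Z=\ZZ(\tau)$ then yields the trace-class property.

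I do not foresee any real obstacle: the work has been done in the preceding lemma (whose proof is an easy adaptation of \cite[Lemma 15.7]{Borthwick}, the key point being that no summand of $\L_{Z,s,\rho}$ acts as the identity on any $D_a$ when $Z\subset\W_{\geq 2}$), and the role of the threshold $\tau_0$ was set up precisely to make the second case reduce to the first. The only micro-step worth flagging is that $V$ being finite dimensional means $\H(\D;V)$ is a finite direct sum of copies of $\H(\D;\C)$, so trace-class-ness transfers from the scalar setting to the vector-valued setting with no additional argument.
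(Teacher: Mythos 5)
Your argument is exactly the one the paper intends: the corollary is an immediate consequence of the preceding lemma, using $\W_{2}\subset\W_{\geq2}$ for part (1) and Lemma \ref{lem:Zbar-is-good} (the definition of $\tau_{0}$) for part (2), which is why the paper states it without a separate proof. Your closing remark about the finite-dimensionality of $V$ is consistent with how the lemma's proof adapts \cite[Lemma 15.7]{Borthwick}, so there is nothing to correct.
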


Given Corollary \ref{cor:trace-class-operators} we can define \emph{zeta
functions}
\begin{align*}
\zeta_{\rho}(s) & \eqdf\det(1-\L_{s,\rho}),\\
\zeta_{\tau,\rho}(s) & \eqdf\det(1-\L_{\tau,s,\rho}^{2}).
\end{align*}
The determinants that appear here are Fredholm determinants. The reason
that we have used $\L_{\tau,s,\rho}^{2}$ in the definition of $\zeta_{\tau,\rho}(s)$
is that it will later allow us to estimate $\log|\zeta_{\tau,\rho}(s)|$
in terms of the Hilbert-Schmidt norm of $\L_{\tau,s,\rho}$ rather
than the trace norm (cf. (\ref{eq:first-est})). On the other hand,
we do not square $\L_{s,\rho}$ in the definition of $\zeta_{\rho}(s)$
so that we can access known results about $\zeta_{\rho}(s)$. 

By the general theory of Fredholm determinants we have 
\begin{lem}
\label{lem:zeros-and-fixed-points}Let $(\rho,V)$ be any finite dimensional
unitary representation of $\Gamma$.
\begin{enumerate}
\item The function $\zeta_{\rho}(s)$ is an entire function of $s\in\C$
and 
\[
\zeta_{\rho}(s)=0\iff\exists\,u\in\H(\D;V)\,:\,\L_{s,\rho}u=u.
\]
\item If $\tau<\tau_{0}$ then $\zeta_{\tau,\rho}(s)$ is an entire function
of $s\in\C$ and
\[
\zeta_{\tau,\rho}(s)=0\iff\exists\,u\in\H(\D;V)\,:\,\L_{\tau,s,\rho}^{2}u=u.
\]
\end{enumerate}
\end{lem}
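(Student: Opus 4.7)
The plan is to derive both statements from the general theory of Fredholm determinants of holomorphic families of trace class operators. By Corollary \ref{cor:trace-class-operators}, for every $s\in\C$ the operator $\L_{s,\rho}$ is trace class on $\H(\D;V)$, and for $\tau<\tau_{0}$ so is $\L_{\tau,s,\rho}$, hence so is $\L_{\tau,s,\rho}^{2}$. The first thing I would verify is that $s\mapsto\L_{s,\rho}$ and $s\mapsto\L_{\tau,s,\rho}$ are holomorphic as maps from $\C$ into the Banach space $\mathcal{S}_{1}(\H(\D;V))$ of trace class operators equipped with the trace norm. The only $s$-dependence in each summand of $\L_{Z,s,\rho}$ enters through the factor $\gamma_{\a'}'(x)^{s}=\exp\bigl(s\log\gamma_{\a'}'(x)\bigr)$, with the branch of $\log$ fixed (as discussed after the definition of $\L_{Z,s,\rho}$) using that $\gamma_{\a'}'$ is positive on $I_{b}$ and never a negative real on $D_{b}$. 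This makes $s\mapsto\gamma_{\a'}'(x)^{s}$ entire pointwise.

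For the $\L_{s,\rho}$ case this is a finite sum of summands depending entirely on $s$, so one can differentiate in $s$ termwise inside the trace norm. For $\L_{\tau,s,\rho}$ one has a summand for each $\a\in\ZZ(\tau)$, and the contraction estimates of Lemma \ref{lem:standard-estimates} (through the proof of Corollary \ref{cor:trace-class-operators}) yield trace-norm decay of each summand that is exponential in $|\a|$, uniformly for $s$ in compact subsets of $\C$. Consequently, the series defining $\L_{\tau,s,\rho}$, together with its formal $s$-derivative, converges absolutely in $\mathcal{S}_{1}$, locally uniformly in $s$. This gives holomorphy of $s\mapsto\L_{\tau,s,\rho}$, and therefore of $s\mapsto\L_{\tau,s,\rho}^{2}$, as $\mathcal{S}_{1}$-valued maps.

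I would then invoke the two standard facts about the Fredholm determinant $A\mapsto\det(1-A)$ on $\mathcal{S}_{1}$:
\begin{enumerate}
\item $\det(1-\,\cdot\,)$ is an entire function on $\mathcal{S}_{1}$ in the trace norm, so composing with a holomorphic $\mathcal{S}_{1}$-valued map gives an entire scalar function. This yields that $\zeta_{\rho}$ and $\zeta_{\tau,\rho}$ are entire.
\item For $A\in\mathcal{S}_{1}$ one has $\det(1-A)=0$ if and only if $1$ belongs to the spectrum of $A$; and since $A$ is compact, $1\in\mathrm{spec}(A)$ if and only if $1$ is an eigenvalue, i.e.\ there exists a nonzero $u$ with $Au=u$.
\end{enumerate}
Applying (ii) with $A=\L_{s,\rho}$ gives the first equivalence, and with $A=\L_{\tau,s,\rho}^{2}$ gives the second.

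The only mildly delicate point, and the one I would handle most carefully, is the verification of holomorphy in the trace norm for $\L_{\tau,s,\rho}$: one needs a quantitative bound on the trace norm of each summand (not just its operator norm) that is summable over $\a\in\ZZ(\tau)$, locally uniformly in $s$. This is exactly what is extracted in the proof of Corollary \ref{cor:trace-class-operators} (adapting \cite[Lemma 15.7]{Borthwick}) and requires the hypothesis $\tau<\tau_{0}$, which ensures $\ZZ(\tau)\subset\W_{\geq2}$ by Lemma \ref{lem:Zbar-is-good} so that no summand acts as the identity on any disc $D_{a}$. Beyond this, everything is a direct application of the Fredholm determinant formalism.
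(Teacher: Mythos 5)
Your proof is correct and is precisely the argument the paper leaves implicit when it states that the lemma follows ``by the general theory of Fredholm determinants'': trace-class membership from Corollary \ref{cor:trace-class-operators}, holomorphy of $s\mapsto\L_{s,\rho}$ and $s\mapsto\L_{\tau,s,\rho}^{2}$ in the trace norm, entirety of $\det(1-\,\cdot\,)$, and the equivalence $\det(1-A)=0\iff 1$ is an eigenvalue of the compact operator $A$ (with $u\neq0$ understood in the statement). The only simplification worth noting is that the step you flag as delicate is vacuous: $\ZZ(\tau)$ is a finite set (the definition of $\L_{Z,s,\rho}$ already takes $Z$ finite, cf.\ Lemma \ref{lem:size-of-partition}), so $\L_{\tau,s,\rho}$ is a finite sum of weighted composition operators and no infinite-series convergence in $\mathcal{S}_{1}$ needs to be checked.
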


The relevance of the zeta functions $\zeta_{\rho}(s)$ are the following:
\begin{prop}
\label{prop:factorization-induction}Let $\phi_{n}:\Gamma\to S_{n}$
be a fixed homomorphism, and $(\rho_{n},V_{n})$ the unitary representation
corresponding to $\phi_{n}$ via (\ref{eq:homomorphism-to-representation}).
Let $X_{n}$ be the $n$-cover of $X$ corresponding to $\phi_{n}$.
\begin{enumerate}
\item We have $\zeta_{\rho_{n}}(s)=Z_{X,\rho_{n}}(s)=Z_{X_{n}}(s)$.
\item We have $Z_{X_{n}}(s)=Z_{X}(s)\zeta_{\rho_{n}^{0}}(s).$
\end{enumerate}
\end{prop}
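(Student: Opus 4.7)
The plan is to combine two ingredients: (a) a Ruelle-type trace formula identifying the transfer operator Fredholm determinant $\zeta_\rho(s)$ with the twisted Selberg zeta function $Z_{X,\rho}(s)$ for any finite-dimensional unitary representation $\rho$ of $\Gamma$; and (b) an induction identity $Z_{X,\rho_n}(s) = Z_{X_n}(s)$. Part~(1) is then the composition of the two; part~(2) follows by combining the same ingredients with the splitting $\rho_n \cong \mathbf{1} \oplus \rho_n^0$ and multiplicativity of twisted Selberg zeta functions on direct sums.

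For ingredient (a) I would start from $\log \det(1 - \L_{s,\rho}) = -\sum_{m \geq 1} m^{-1} \Tr \L_{s,\rho}^m$ and compute the right-hand side via a Ruelle-type trace formula on the Bergman space $\H(\D; V)$. Iterating the definition of $\L_{s,\rho}$ writes $\L_{s,\rho}^m$ as a sum, over words $\beta \in \W_m$, of weighted composition operators $f \mapsto \gamma'_\beta(\cdot)^s \rho(\gamma_\beta^{-1}) f(\gamma_\beta(\cdot))$; only those $\beta$ for which $\gamma_\beta$ maps some disc $D_b$ into itself contribute to the trace, and for each such ``cyclic'' $\beta$ the unique attracting fixed point $x_\beta$ of $\gamma_\beta$ yields
\[
\Tr\bigl( f \mapsto \gamma'_\beta(\cdot)^s \rho(\gamma_\beta^{-1}) f(\gamma_\beta(\cdot)) \bigr) = \frac{\gamma'_\beta(x_\beta)^s \Tr \rho(\gamma_\beta^{-1})}{1 - \gamma'_\beta(x_\beta)}
\]
by the standard formula for traces of weighted composition operators on Bergman spaces. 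Grouping the cyclic words $\beta$ by primitive root, so that $\gamma_\beta = \gamma_0^k$ for a primitive conjugacy class $\gamma_0 \in \P(X)$, and summing the resulting geometric series in both $m$ and $k$, recovers $\log Z_{X,\rho}(s)$ via $-\log \det(1 - A) = \sum_m m^{-1} \Tr(A^m)$ applied to $A = e^{-(s+k) l(\gamma_0)} \rho(\gamma_0)$. This is the classical Ruelle/Fried identification, for which I would invoke \cite{Borthwick, FP}.

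For ingredient (b) I would use the orbit decomposition $X_n = \bigsqcup_{i \in S} \Gamma_i \backslash \mathbb{H}$ from the introduction, giving $Z_{X_n}(s) = \prod_{i \in S} Z_{\Gamma_i \backslash \mathbb{H}}(s)$, together with the isomorphism $\rho_n \cong \bigoplus_{i \in S} \Ind_{\Gamma_i}^\Gamma \mathbf{1}$. It then suffices to establish $Z_{X, \Ind_H^\Gamma \mathbf{1}}(s) = Z_{H\backslash \mathbb{H}}(s)$ for a single finite-index subgroup $H \leq \Gamma$, which I would verify factor-by-factor in the Euler product: for $\gamma \in \Gamma$ primitive, representing $\gamma_0 \in \P(X)$ of length $l_0$, the permutation-determinant identity $\det(1 - z \std_n(\sigma)) = \prod_c (1 - z^{|c|})$ (product over cycles $c$ of $\sigma \in S_n$) converts $\det(1 - e^{-(s+k) l_0} \rho_n(\gamma))$ into $\prod_c (1 - e^{-(s+k) |c| l_0})$, and each cycle $c$ of length $d$ is matched with a unique primitive closed geodesic of length $d\,l_0$ on the appropriate component of $X_n$. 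Part~(2) then follows by applying ingredient (a) to $\rho_n^0$ and using the multiplicativity $Z_{X,\mathbf{1} \oplus \rho_n^0}(s) = Z_X(s) \cdot Z_{X,\rho_n^0}(s)$ read off from the Euler product.

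The most delicate step is the primitivity matching in ingredient (b): a cycle of $\phi_n(\gamma)$ of length $d$ must correspond to a \emph{primitive} closed geodesic on $X_n$, not to a proper iterate of a shorter one. This is where freeness of $\Gamma$ is essential: if some power $\gamma^d$ were equal to $\delta^m$ with $\delta \in \Gamma_i$ and $m > 1$, then $\delta$ would commute with $\gamma$ in the free group $\Gamma$ and hence be a power of $\gamma$, say $\delta = \gamma^{d/m}$, forcing $\phi_n(\gamma)^{d/m}$ to fix $i$ and contradicting the minimality of the cycle length $d$. Once this primitivity is in place, the two Euler products consist of exactly the same factors with the same multiplicities, and the proposition follows.
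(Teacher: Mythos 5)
Your proposal is correct, and its overall architecture is the same as the paper's: identify $\zeta_{\rho}(s)$ with $Z_{X,\rho}(s)$, identify $Z_{X,\rho_n}(s)$ with $Z_{X_n}(s)$, and get Part 2 from $\rho_n\cong\mathbf{1}\oplus\rho_n^{0}$ and multiplicativity of $Z_{X,\cdot}$ over direct sums. The difference is in how the two key inputs are obtained. The paper simply cites \cite[Prop.~2.2]{JNS} for the trace-formula identity (a) and the Fedosova--Pohl Venkov--Zograf theorem \cite[Thm.~6.1]{FP} for both the induction identity and the factorization over direct sums, handling disconnected $X_n$ by decomposing $\rho_n=\bigoplus_j\Ind_{\Gamma_n^j}^{\Gamma}\mathbf{1}$ exactly as you do. You instead sketch a direct proof of (a) via the Ruelle trace computation on the Bergman space, and, more substantially, you prove the induction identity (b) directly at the level of Euler products, using $\det(1-z\,\std_n(\sigma))=\prod_c(1-z^{|c|})$ and matching cycles of $\phi_n(\gamma)$ of length $d$ with primitive geodesics of length $d\,l(\gamma_0)$ on $X_n$; your primitivity argument (equal powers in a free group force both elements into a common cyclic subgroup, contradicting minimality of the cycle length) is exactly the right point to isolate and is correct. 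What your route buys is self-containedness and explicit use of the permutation-model and Schottky (free) structure; what the paper's route buys is brevity and generality, since \cite{FP} does not need freeness. Two small points you should make explicit if you write this up: the identities $\log\det(1-\L_{s,\rho})=-\sum_m m^{-1}\Tr\L_{s,\rho}^m$ and the Euler-product manipulations are valid a priori only for $\Re(s)$ large, with the stated identities of entire functions then following by analytic continuation; and in the trace computation the grouping of cyclic admissible words by primitive root requires the standard bookkeeping that a word which is the $k$-th power of a primitive cyclic word of length $m_0$ has exactly $m_0$ cyclic rotations, so the factor $1/m=1/(km_0)$ produces the $1/k$ needed for $\log\det(1-\rho(\gamma_0)e^{-(s+k)l})$ --- neither is a gap, but both are where the "classical Ruelle/Fried identification" hides its work.
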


\begin{proof}
\emph{Proof of Part 1. }A special case of a result of Jakobson, Naud,
and Soares \cite[Prop. 2.2]{JNS} for arbitrary finite-dimensional
unitary representations gives 
\[
\zeta_{\rho_{n}}(s)=Z_{X,\rho_{n}}(s)
\]
where both sides are entire functions of $s$. 

If $X_{n}$ is connected, then $X_{n}=\Gamma_{n}\backslash\mathbb{H}$
for some $\Gamma_{n}\leq\Gamma$ and $\rho_{n}=\Ind_{\Gamma_{n}}^{\Gamma}1$,
the induction of the trivial representation from $\Gamma_{n}$ to
$\Gamma$. In this case the Venkov-Zograf type induction formula proved
by Fedosova and Pohl in \cite[Thm. 6.1(ii)]{FP} (cf. \cite{VZ})
gives
\[
Z_{X,\rho_{_{n}}}(s)=Z_{X_{n}}(s).
\]
If $X_{n}$ is not connected, let $X_{n}^{(1)},\ldots,X_{n}^{(m)}$
denote its connected components, and let $X_{n}^{(j)}=\Gamma_{n}^{j}\backslash\mathbb{H}$
with $\Gamma_{n}^{j}\leq\Gamma$. If we let $\rho_{n}^{j}=\Ind_{\Gamma_{n}^{j}}^{\Gamma}1$
then we have $\rho_{n}=\bigoplus_{j=1}^{m}\rho_{n}^{j}$. Then 
\[
Z_{X_{n}}(s)=\prod_{j=1}^{m}Z_{X_{n}^{(j)}}(s)=\prod_{j=1}^{m}Z_{X,\rho_{n}^{j}}(s)=Z_{X,\rho_{n}}(s)
\]
where the first equality is by definition of the Selberg zeta functions,
the second equality uses the induction formula \cite[Thm. 6.1(ii)]{FP}
and the last inequality uses the factorization formula \cite[Thm. 6.1(i)]{FP}.
Thus we have proved $\zeta_{\rho_{n}}(s)=Z_{X,\rho_{n}}(s)=Z_{X_{n}}(s)$.
\emph{This proves Part 1.}

\emph{Proof of Part 2. }Using \cite[Prop. 2.2]{JNS} again gives
\begin{equation}
\zeta_{\rho_{n}^{0}}(s)=Z_{X,\rho_{n}^{0}}(s).\label{eq:temp}
\end{equation}
Since $\rho_{n}=\mathbf{1}\oplus\rho_{n}^{0}$, we have 
\[
Z_{X_{n}}(s)=Z_{X,\rho_{n}}(s)=Z_{X}(s)Z_{X,\rho_{n}^{0}}(s)=Z_{X}(s)\zeta_{\rho_{n}^{0}}(s)
\]
where the first equality used Part 1 of the lemma, the second used
the factorization formula \cite[Thm. 6.1(i)]{FP}, and the third used
(\ref{eq:temp}). \emph{This proves Part 2.} 
\end{proof}
The following lemma adapts (a special case of) \cite[Lemma 2.4]{DZFractal}
to our vector-valued setting. The proof is essentially the same.
\begin{lem}
\label{lem:fixed-points-of-transfer-operator-passage}For all sufficiently
small $\tau>0$, if $u\in\H(\D;V)$ is such that $\L_{s,\rho}u=u$,
then\\
 $\L_{\tau,s,\rho}u=u$.
\end{lem}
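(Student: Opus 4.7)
The approach is to iterate the fixed-point identity $\L_{s,\rho} u = u$ and then re-partition the resulting sum using the combinatorial structure of $\ZZ(\tau)$; this adapts \cite[Lemma 2.4]{DZFractal} to vector-valued functions, replacing $Z(\tau)$-prefixes by $\ZZ(\tau)$-suffixes.

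First I would prove by induction on $N$, using the cocycle identity $\gamma'_{\d_1 \d_2}(x) = \gamma'_{\d_1}(\gamma_{\d_2}(x))\,\gamma'_{\d_2}(x)$ and the fact that $\rho$ is a homomorphism, that for every $N \geq 0$ and every $x \in D_b$,
\[
u(x) \;=\; \L_{s,\rho}^{N} u(x) \;=\; \sum_{\substack{\d \in \W_N \\ \d \to b}} \gamma'_{\d}(x)^s\, \rho(\gamma_\d^{-1})\, u(\gamma_\d(x)).
\]
Next I would invoke the partition property of $Z(\tau)$ \cite[eqs.\ (2.7),(2.15)]{DZFractal}: there exists $N_0 = N_0(\tau)$ such that every word of length at least $N_0$ has a unique prefix in $Z(\tau)$. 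Reversing, every such word has a unique suffix in $\ZZ(\tau)$. For each $b \in \A$ this translates into the statement that, provided $\tau<\tau_0$ and $N$ exceeds both $N_0$ and the maximal word length occurring in $\ZZ(\tau)$ (finite by Lemma \ref{lem:tau-to-word-length}), every $\d \in \W_N$ with $\d \to b$ admits a unique decomposition $\d = \e\,\d_0$ with $\d_0 b \in \ZZ(\tau)$.

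Substituting $\d = \e\,\d_0$ into the iterated identity and factoring $\gamma_{\e\d_0} = \gamma_\e \gamma_{\d_0}$ gives
\[
u(x) \;=\; \sum_{\d_0 \,:\, \d_0 b \in \ZZ(\tau)} \gamma'_{\d_0}(x)^s\, \rho(\gamma_{\d_0}^{-1}) \left[\, \sum_{\e} \gamma'_{\e}(\gamma_{\d_0}(x))^s\, \rho(\gamma_\e^{-1})\, u(\gamma_\e(\gamma_{\d_0}(x))) \,\right],
\]
where the inner sum ranges over $\e \in \W_{N - |\d_0|}$ compatible with $\d_0$ (with $\e = \emptyset$ when $|\d_0| = N$). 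Since $\gamma_{\d_0}(x) \in D_{d_{0,1}}$, where $d_{0,1}$ is the first letter of $\d_0$, the bracketed sum is exactly $(\L_{s,\rho}^{N-|\d_0|} u)(\gamma_{\d_0}(x))$, which collapses to $u(\gamma_{\d_0}(x))$ by hypothesis. Re-indexing the outer sum by $\a = \d_0 b \in \ZZ(\tau)$ (so that $\d_0 = \a'$) then reproduces the definition of $\L_{\tau,s,\rho} u(x)$, giving $u = \L_{\tau,s,\rho} u$.

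The only genuine obstacle is verifying that the suffix decomposition $\d \mapsto (\e, \d_0)$ is a bijection onto the appropriate set of pairs that respects the valid-concatenation constraints of $\W_N$; once this combinatorial identification is made carefully, the analytic content is immediate from the cocycle structure of $\gamma'$ and the homomorphism property of $\rho$.
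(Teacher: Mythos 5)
Your argument is correct and is essentially the paper's proof, which simply adapts \cite[Lemma 2.4]{DZFractal}: iterate the fixed-point identity, use the partition property of $Z(\tau)$ (transported to suffixes via the mirror, which is exactly why the paper works with $\ZZ(\tau)$), regroup, and collapse the inner sum using $\L_{s,\rho}^{N-|\d_0|}u=u$. The bijectivity point you flag is immediate from uniqueness in the partition property (if $\d_0 b\in\ZZ(\tau)$ then $\overline{\d_0 b}$ is the unique $Z(\tau)$-prefix of $\overline{\e\d_0 b}$ for every compatible $\e$), and the factorization of the complex power $\gamma'_{\e\d_0}(x)^s$ holds by analytic continuation from $I_b$, so no gap remains.
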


\begin{cor}
\label{cor:new-zeros-zeros-of-zeta}For all sufficiently small $\tau>0$,
if $Z_{X_{n}}(s)=0$ and $Z_{X}(s)\neq0$, then $\zeta_{\tau,\rho_{n}^{0}}(s)=0$.
\end{cor}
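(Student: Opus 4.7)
The corollary is really a clean consequence of the machinery already assembled, so the plan is just to chain together the previous lemmas in the right order.

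First, I would invoke Proposition \ref{prop:factorization-induction}(2), which gives the factorization $Z_{X_{n}}(s)=Z_{X}(s)\,\zeta_{\rho_{n}^{0}}(s)$ as an identity of entire functions of $s$. Under the hypotheses $Z_{X_{n}}(s)=0$ and $Z_{X}(s)\neq 0$, this forces $\zeta_{\rho_{n}^{0}}(s)=0$.

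Next, I would translate this vanishing into an eigenvector statement. By Lemma \ref{lem:zeros-and-fixed-points}(1), the vanishing $\zeta_{\rho_{n}^{0}}(s)=\det(1-\L_{s,\rho_{n}^{0}})=0$ means there exists a nonzero $u\in\H(\D;V_{n}^{0})$ with $\L_{s,\rho_{n}^{0}}u=u$. Then Lemma \ref{lem:fixed-points-of-transfer-operator-passage}, applied with $\rho=\rho_{n}^{0}$, transfers this fixed point to the refined operator: for all sufficiently small $\tau>0$ one has $\L_{\tau,s,\rho_{n}^{0}}u=u$ as well.

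Finally, squaring gives $\L_{\tau,s,\rho_{n}^{0}}^{2}u=u$, so $u$ is a nonzero fixed point of $\L_{\tau,s,\rho_{n}^{0}}^{2}$ in $\H(\D;V_{n}^{0})$. Applying Lemma \ref{lem:zeros-and-fixed-points}(2) in the reverse direction then yields $\zeta_{\tau,\rho_{n}^{0}}(s)=\det(1-\L_{\tau,s,\rho_{n}^{0}}^{2})=0$, which is exactly the conclusion. There is no real obstacle here; the only mild care is to be sure that the small-$\tau$ threshold is the one supplied by Lemma \ref{lem:fixed-points-of-transfer-operator-passage} (which in turn requires $\tau<\tau_{0}$ to guarantee the operators are trace class via Corollary \ref{cor:trace-class-operators}), so ``sufficiently small $\tau$'' in the statement should be taken to be the minimum of these two thresholds.
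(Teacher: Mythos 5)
Your proposal is correct and follows exactly the same chain as the paper's proof: the factorization of Proposition \ref{prop:factorization-induction}(2) forces $\zeta_{\rho_{n}^{0}}(s)=0$, Lemma \ref{lem:zeros-and-fixed-points}(1) produces a fixed point of $\L_{s,\rho_{n}^{0}}$, Lemma \ref{lem:fixed-points-of-transfer-operator-passage} transfers it to $\L_{\tau,s,\rho_{n}^{0}}$, and squaring plus Lemma \ref{lem:zeros-and-fixed-points}(2) gives $\zeta_{\tau,\rho_{n}^{0}}(s)=0$. Your remark about taking the small-$\tau$ threshold compatible with $\tau<\tau_{0}$ is a sensible (and harmless) precision.
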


\begin{proof}
If $Z_{X_{n}}(s)=0$, $Z_{X}(s)\neq0$, then by Proposition \ref{prop:factorization-induction},
Part 2, $\zeta_{\rho_{n}^{0}}(s)=0.$ Then by Lemma \ref{lem:zeros-and-fixed-points},
Part 1, there is $u\in\H(\D;V_{n}^{0})$ such that $\L_{s,\rho_{n}^{0}}u=u$.
By Lemma \ref{lem:fixed-points-of-transfer-operator-passage}, this
implies that $\L_{\tau,s,\rho_{n}^{0}}u=u$, and hence $\L_{\tau,s,\rho_{n}^{0}}^{2}u=u$.
Then by Lemma \ref{lem:zeros-and-fixed-points}, Part 2, $\zeta_{\tau,\rho_{n}^{0}}(s)=0$.
\end{proof}

\subsection{The Hilbert-Schmidt norm of the transfer operator}

Corollary \ref{cor:new-zeros-zeros-of-zeta} reduces controlling zeros
of the Selberg zeta function of $X_{n}$ that do not come from $X$
to controlling zeros of $\zeta_{\tau,\rho_{n}^{0}}(s)$. To do this,
we will use Jensen's formula, but before doing so, we collect some
estimates. The first will be a pointwise lower bound on $|\zeta_{\tau,\rho}(s)|$
when $s$ is a sufficiently large real number (cf. $\S$\ref{subsec:A-pointwise-estimate}).
The other will be an estimate for the expectation of the squared Hilbert-Schmidt
norm $\|\L_{\tau,s,\rho}\|_{\HS}^{2}$ for $\rho=\rho_{n}^{0}$. One
input to the latter result is a deterministic (non-random) expression
for $\|\L_{\tau,s,\rho}\|_{\HS}^{2}$ that we give now.
\begin{lem}
\label{lem:expression-for-HS-norm}Let $(\rho,V)$ be any finite dimensional
unitary representation of $\Gamma.$ We have for any $s\in\C$ and
$\tau\leq\tau_{0}$
\[
\|\L_{\tau,s,\rho}\|_{\HS}^{2}=\sum_{a,b\in\A}\sum_{\substack{\a_{1},\a_{2}\in\ZZ(\tau)\\
a\rsa\a_{1},\a_{2}\rsa b
}
}\Tr(\rho(\gamma_{\a_{1}'}\gamma_{\a'_{2}}^{-1}))\int_{D_{b}}\gamma'_{\a_{1}'}(x)^{s}\overline{\gamma'_{\a_{2}'}(x)^{s}}B_{D_{a}}(\gamma_{\a_{1}'}(x),\gamma_{\a_{2}'}(x))dm(x).
\]
Here and henceforth we write $a\rsa\a_{1},\a_{2}\rsa b$ to mean that
both $a\rsa\a_{1}\rsa b$ and $a\rsa\a_{2}\rsa b$.
\end{lem}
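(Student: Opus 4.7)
My plan is to compute $\|\L_{\tau,s,\rho}\|_{\HS}^{2}$ directly from its definition as a sum over an orthonormal basis, exploiting the orthogonal decomposition
\[
\H(\D;V)\;=\;\bigoplus_{a\in\A}\H(D_{a};V)\;=\;\bigoplus_{a\in\A}\H(D_{a};\C)\otimes V.
\]
Fix for each $a\in\A$ an orthonormal basis $\{\e_{k}^{(a)}\}_{k\geq 1}$ of $\H(D_a;\C)$ and an orthonormal basis $\{v_{j}\}_{j=1}^{\dim V}$ of $V$. Then $\{\e_{k}^{(a)}\otimes v_{j}\}_{a,k,j}$ is an orthonormal basis of $\H(\D;V)$, so
\[
\|\L_{\tau,s,\rho}\|_{\HS}^{2}=\sum_{a,k,j}\bigl\|\L_{\tau,s,\rho}(\e_{k}^{(a)}\otimes v_{j})\bigr\|_{\H(\D;V)}^{2}.
\]
The first key observation is that when we apply $\L_{\tau,s,\rho}$ to a function supported on $D_a$, only the $\a\in\ZZ(\tau)$ whose first letter equals $a$ can contribute; equivalently the condition $a\rsa\a$ must hold, because the Schottky property forces $\gamma_{\a'}(D_{b})\subset D_{c_{1}}$, where $c_{1}$ is the first letter of $\a$. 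Thus, for $x\in D_{b}$,
\[
\L_{\tau,s,\rho}(\e_{k}^{(a)}\otimes v_{j})(x)\;=\;\sum_{\substack{\a\in\ZZ(\tau)\\ a\rsa\a\rsa b}}\gamma'_{\a'}(x)^{s}\,\e_{k}^{(a)}(\gamma_{\a'}(x))\,\rho(\gamma_{\a'}^{-1})v_{j}.
\]

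Next I would square this pointwise and integrate over $D_{b}$, summing over $b$. Expanding the norm square introduces a double sum over $\a_{1},\a_{2}\in\ZZ(\tau)$ with $a\rsa\a_{1},\a_{2}\rsa b$, and produces the factor
\[
\gamma'_{\a_{1}'}(x)^{s}\overline{\gamma'_{\a_{2}'}(x)^{s}}\,\e_{k}^{(a)}(\gamma_{\a_{1}'}(x))\overline{\e_{k}^{(a)}(\gamma_{\a_{2}'}(x))}\,\bigl\langle\rho(\gamma_{\a_{1}'}^{-1})v_{j},\rho(\gamma_{\a_{2}'}^{-1})v_{j}\bigr\rangle_{V}.
\]
Summing over $j$ collapses the vector inner product into a trace: since $\rho$ is unitary, $\sum_{j}\langle\rho(\gamma_{\a_{1}'}^{-1})v_{j},\rho(\gamma_{\a_{2}'}^{-1})v_{j}\rangle_{V}=\Tr\bigl(\rho(\gamma_{\a_{2}'})\rho(\gamma_{\a_{1}'}^{-1})\bigr)=\Tr\bigl(\rho(\gamma_{\a_{2}'}\gamma_{\a_{1}'}^{-1})\bigr)$. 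Relabelling the summation indices $\a_{1}\leftrightarrow\a_{2}$ (which simultaneously conjugates the other scalar factors and, by $B_{D_a}(y,z)=\overline{B_{D_a}(z,y)}$, the Bergman kernel in the next step) puts the trace in the form $\Tr(\rho(\gamma_{\a_{1}'}\gamma_{\a_{2}'}^{-1}))$ that appears in the statement; the total expression, being real, is unchanged.

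Finally I would sum over $k$ and invoke the defining identity of the Bergman kernel on $D_{a}$: for any $y_{1},y_{2}\in D_{a}$,
\[
\sum_{k}\e_{k}^{(a)}(y_{1})\overline{\e_{k}^{(a)}(y_{2})}\;=\;B_{D_{a}}(y_{1},y_{2}).
\]
Applied with $y_{i}=\gamma_{\a_{i}'}(x)\in D_{a}$ (valid precisely because $a\rsa\a_{i}$, as noted above), this replaces the sum over $k$ by $B_{D_{a}}(\gamma_{\a_{1}'}(x),\gamma_{\a_{2}'}(x))$. Exchanging the finite sums and the integral over $D_{b}$ with the convergent sums over $k$ (justified by trace-class membership, Corollary \ref{cor:trace-class-operators}) yields exactly the claimed formula. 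The only subtlety is bookkeeping: ensuring that the admissibility conditions $a\rsa\a_{i}\rsa b$ emerge correctly from the combination of $\L_{\tau,s,\rho}$'s definition (giving $\a_{i}\rsa b$) and the support restriction of $\e_{k}^{(a)}$ (giving $a\rsa\a_{i}$); the rest is a routine expansion.
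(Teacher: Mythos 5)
Your argument is correct and is essentially the paper's own proof: you expand the Hilbert--Schmidt norm over the orthonormal basis $\{\e_{k}^{(a)}\otimes v_{j}\}$, collapse the $j$-sum to a trace of $\rho$ and the $k$-sum to the Bergman kernel, with the admissibility conditions $a\rsa\a_{i}\rsa b$ arising from the definition of $\L_{\tau,s,\rho}$ and the support of $\e_{k}^{(a)}$ exactly as in the paper. One small correction: the interchange of the infinite $k$-sum with the integral over $D_{b}$ is not justified by trace-class membership per se, but rather (as the paper argues) by the fact that $\tau\leq\tau_{0}$ forces $\ZZ(\tau)\subset\W_{\geq2}$, so each $\gamma_{\a_{i}'}$ maps $D_{b}$ into a compact subset of $D_{a}$ on which the series $\sum_{k}\e_{k}^{(a)}(y_{1})\overline{\e_{k}^{(a)}(y_{2})}$ converges uniformly, giving the needed domination.
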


\begin{proof}
This is similar to arguments given by Jakobson and Naud in \cite[pgs. 466-467]{JNIJM}.
For $a\in\A$, let $\{\e_{k}^{a}\}_{k=1}^{\infty}$ be an orthonormal
basis for $\H(D_{a};\C)$ and let $\{v_{j}\}_{j=1}^{\dim V}$ be an
orthonormal basis for $V$. Then $\{\e_{k}^{a}\otimes v_{j}\::\:a\in\A,k\in\N,1\leq j\leq\dim V\,\}$
is an orthonormal basis for $\H(\D;V)$. We have 
\begin{align*}
\|\L_{\tau,s,\rho}\|_{\HS}^{2} & =\Tr(\L_{\tau,s,\rho}^{*}\L_{s,\tau,\rho})\\
 & =\sum_{a\in\A,k\in\N,1\leq j\leq\dim V}\langle\L_{\tau,s,\rho}[\e_{k}^{a}\otimes v_{j}],\L_{\tau,s,\rho}[\e_{k}^{a}\otimes v_{j}]\rangle\\
 & =\sum_{a\in\A,k\in\N,1\leq j\leq\dim V}\sum_{b\in\A}\int_{D_{b}}\langle\L_{\tau,s,\rho}[\e_{k}^{a}\otimes v_{j}](x),\L_{\tau,s,\rho}[\e_{k}^{a}\otimes v_{j}](x)\rangle dm(x)\\
 & =\sum_{a\in\A,k\in\N,1\leq j\leq\dim V}\sum_{b\in\A}\sum_{\substack{\a_{1},\a_{2}\in\ZZ(\tau)\\
\a_{1},\a_{2}\rsa b
}
}\\
 & \int_{D_{b}}\gamma'_{\a_{1}'}(x)^{s}\overline{\gamma'_{\a_{2}'}(x)^{s}}\langle\rho(\gamma_{\a_{1}'}^{-1})\e_{k}^{a}\otimes v_{j}(\gamma_{\a_{1}'}(x)),\rho(\gamma_{\a_{2}'}^{-1})\e_{k}^{a}\otimes v_{j}(\gamma_{\a_{2}'}(x))\rangle_{V}dm(x)\\
 & =\sum_{a,b\in\A,k\in\N}\sum_{\substack{\a_{1},\a_{2}\in\ZZ(\tau)\\
a\rsa\a_{1},\a_{2}\rsa b
}
}\\
 & \Tr(\rho(\gamma_{\a_{1}'}\gamma_{\a'_{2}}^{-1}))\int_{D_{b}}\gamma'_{\a_{1}'}(x)^{s}\overline{\gamma'_{\a_{2}'}(x)^{s}}\e_{k}^{a}(\gamma_{\a_{1}'}(x))\overline{\e_{k}^{a}(\gamma_{\a_{2}'}(x))}dm(x)\\
 & =\sum_{a,b\in\A}\sum_{\substack{\a_{1},\a_{2}\in\ZZ(\tau)\\
a\rsa\a_{1},\a_{2}\rsa b
}
}\Tr(\rho(\gamma_{\a_{1}'}\gamma_{\a'_{2}}^{-1}))\int_{D_{b}}\gamma'_{\a_{1}'}(x)^{s}\overline{\gamma'_{\a_{2}'}(x)^{s}}B_{D_{a}}(\gamma_{\a_{1}'}(x),\gamma_{\a_{2}'}(x))dm(x).
\end{align*}
The final application of Fubini's theorem is justified since we assume
$\tau\leq\tau_{0}$, so $\ZZ(\tau)\subset\W_{\geq2}$, and each $\gamma_{\a'_{1}},\gamma_{\a'_{2}}$
maps $D_{b}$ into a compact subset of $D_{a}$, coupled with the
fact that the convergence of $\sum_{k=1}^{\infty}\e_{k}^{\a}(x_{1})\overline{\e_{k}^{\a}(x_{2})}$
to $B_{D_{a}}(x_{1},x_{2})$ is uniform on compact subsets of $D_{a}$
(see, for example, \cite[Proof of Thm. 15.7]{Borthwick}). 
\end{proof}

\subsection{\label{subsec:A-pointwise-estimate}A pointwise estimate for the
modulus of a zeta function}
\begin{prop}[Pointwise bound for $|\zeta_{\tau,\rho}(s)|$]
\label{prop:pointiwse-bound}There is $\tau_{1}\leq\tau_{0}$ and
$B\in\R$ with $B>2\delta$ such that if $\tau\leq\tau_{1}$, if $s\in[B,\infty)$,
and $(\rho,V)$ is any finite dimensional unitary representation of
$\Gamma$, we have 
\[
-\log|\zeta_{\tau,\rho}(s)|\leq(\dim V)\tau.
\]
\end{prop}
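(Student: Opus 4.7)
The plan is to reduce the pointwise lower bound for $|\zeta_{\tau,\rho}(s)|=|\det(1-\L_{\tau,s,\rho}^{2})|$ to estimates on the Hilbert--Schmidt norm of $\L_{\tau,s,\rho}$ provided by Lemma \ref{lem:expression-for-HS-norm}. The starting point is the classical Fredholm inequality: for a trace class operator $T$ with $\|T\|_{op}\leq\tfrac{1}{2}$, Weyl's inequality $|\lambda_{i}(T)|\leq s_{i}(T)\leq\|T\|_{op}$ combined with $-\log(1-x)\leq 2x$ on $[0,\tfrac{1}{2}]$ gives $-\log|\det(1-T)|\leq 2\|T\|_{1}$. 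Setting $T=\L_{\tau,s,\rho}^{2}$ and using $\|T\|_{1}\leq\|\L_{\tau,s,\rho}\|_{\HS}^{2}$ reduces the proof to two tasks: (i) a Hilbert--Schmidt bound of the form $\|\L_{\tau,s,\rho}\|_{\HS}^{2}\leq\tfrac{1}{2}(\dim V)\tau$, and (ii) an operator-norm bound $\|\L_{\tau,s,\rho}\|_{op}^{2}\leq\tfrac{1}{2}$ which must be \emph{uniform in} $\dim V$.

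For (ii), unitarity of $\rho$ and the triangle inequality give, for $f\in\H(\D;V)$ and $\sigma=\Re s$,
\[
|\L_{\tau,s,\rho}[f](x)|_{V}\leq\sum_{\substack{\a\in\ZZ(\tau)\\\a\rsa b}}|\gamma'_{\a'}(x)|^{\sigma}|f(\gamma_{\a'}(x))|_{V}=\L^{+}_{\tau,\sigma}[|f|_{V}](x),\quad x\in D_{b},
\]
where $\L^{+}_{\tau,\sigma}$ denotes the positive scalar transfer operator on $L^{2}(\D;\C)$. Since $\|\,|f|_{V}\|_{L^{2}(\D;\C)}=\|f\|_{\H(\D;V)}$ and $\L_{\tau,s,\rho}[f]$ is holomorphic, this yields $\|\L_{\tau,s,\rho}\|_{op}\leq\|\L^{+}_{\tau,\sigma}\|_{L^{2}\to L^{2}}$, a bound free of $\dim V$. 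A Cauchy--Schwarz split in the $\a$-sum, the change of variables $y=\gamma_{\a'}(x)$, and the estimates of Lemmas \ref{lem:deriv-to-upsilon}, \ref{lem:upsilon-to-tau}, \ref{lem:size-of-partition} combined with the bounded distortion of Lemma \ref{lem:standard-estimates} (to control $|(\gamma_{\a'}^{-1})'|$) should give $\|\L^{+}_{\tau,\sigma}\|^{2}_{L^{2}\to L^{2}}\leq C\tau^{2\sigma-2\delta-2}$, which is $\leq\tfrac{1}{2}$ once $\sigma>\delta+1$ and $\tau$ is sufficiently small.

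For (i), I apply Lemma \ref{lem:expression-for-HS-norm}. The restriction $\tau<\tau_{0}$ forces $\ZZ(\tau)\subset\W_{\geq 2}$ via Lemma \ref{lem:Zbar-is-good}, so each image $\gamma_{\a'}(\overline{D_{b}})$ lies in a \emph{uniform} compact subset of $D_{a}$ (determined by the first two letters of $\a$), and the explicit Bergman kernel formula \eqref{eq:explicit-bergman} is bounded there by some $M=M(\Gamma)$. Combining this with $|\Tr\rho(\gamma_{\a_{1}'}\gamma_{\a_{2}'}^{-1})|\leq\dim V$, $|\gamma'_{\a'}(x)^{s}|=|\gamma'_{\a'}(x)|^{\sigma}\leq(K_{0}\Upsilon_{\a})^{\sigma}$ from Lemma \ref{lem:deriv-to-upsilon}, and the elementary estimate
\[
\sum_{\a\in\ZZ(\tau)}\Upsilon_{\a}^{\sigma}\leq(K_{1}\tau)^{\sigma-\delta}\sum_{\a\in\ZZ(\tau)}\Upsilon_{\a}^{\delta}\leq C\tau^{\sigma-\delta}\quad(\sigma\geq\delta),
\]
which uses Lemmas \ref{lem:upsilon-to-tau} and \ref{lem:size-of-partition} to see that $\sum_{\a}\Upsilon_{\a}^{\delta}=O(1)$, one arrives at $\|\L_{\tau,s,\rho}\|_{\HS}^{2}\leq C(\dim V)\tau^{2\sigma-2\delta}$ after squaring the telescoped sum and bounding the area integral by a constant.

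To assemble: fix $B>\delta+1$, which automatically satisfies $B>2\delta$ since $\delta<1$. Then pick $\tau_{1}\leq\tau_{0}$ small enough that both $C\tau_{1}^{2B-2\delta-2}\leq\tfrac{1}{2}$ (ensuring $\|\L_{\tau,s,\rho}^{2}\|_{op}\leq\tfrac{1}{2}$) and $2C\tau_{1}^{2B-2\delta-1}\leq 1$ (ensuring the HS bound delivers the target). For $\tau\leq\tau_{1}$ and $s\in[B,\infty)$ one obtains
\[
-\log|\zeta_{\tau,\rho}(s)|\leq 2\|\L_{\tau,s,\rho}\|_{\HS}^{2}\leq 2C(\dim V)\tau^{2\sigma-2\delta}\leq(\dim V)\tau,
\]
as required. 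The principal technical hurdle is ensuring the operator-norm smallness \emph{uniformly in} $\dim V$: any bound that allowed $\dim V$ to enter would force $\tau_{1}$ to depend on the representation, which would be fatal for later applications with $\dim V=n-1$ and $n$ arbitrary. The reduction to the scalar $\L^{+}_{\tau,\sigma}$ on $L^{2}(\D;\C)$ via unitarity is the essential step that removes this dependence.
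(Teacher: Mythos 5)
Your argument is correct, but it follows a genuinely different route from the paper. The paper works directly with the identity $\log\det(1-\L_{\tau,s,\rho}^{2})=-\sum_{k\geq1}\frac{1}{k}\Tr\L_{\tau,s,\rho}^{2k}$ and evaluates each trace exactly via the Lefschetz fixed point formula, as a sum over cyclic chains $\a_{1}\rsa\cdots\rsa\a_{2k}\rsa\a_{1}$ of words in $\ZZ(\tau)$; the derivative at the attracting fixed point is bounded by $K^{k}\tau^{2k}$ using Lemmas \ref{lem:deriv-to-upsilon}, \ref{lem:coarse-homomorphism}, \ref{lem:upsilon-to-tau}, the trace of $\rho$ by $\dim V$, and the count by Lemma \ref{lem:size-of-partition}, after which $B$ and $\tau_{1}$ are chosen so the series is dominated by $(\dim V)\sum_{k}(\tau/2)^{sk}\leq(\dim V)\tau$. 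You instead bound $-\log|\det(1-T)|$ for $T=\L_{\tau,s,\rho}^{2}$ by $2\|T\|_{1}\leq2\|\L_{\tau,s,\rho}\|_{\HS}^{2}$ via Weyl's inequality and $-\log(1-x)\leq2x$, which requires the extra ingredient $\|T\|_{op}\leq\frac{1}{2}$; your domination of $\L_{\tau,s,\rho}$ by the positive scalar operator $\L^{+}_{\tau,\sigma}$ on $L^{2}(\D;\C)$, using unitarity of $\rho$, is exactly what makes this smallness uniform in $\dim V$, and the Hilbert--Schmidt bound then follows from Lemma \ref{lem:expression-for-HS-norm} in the same way the paper later proves Lemma \ref{lem:maj-exp-hs-norm}. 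What the paper's route buys is that no operator-norm input is needed at all (absolute convergence of the trace series plays that role); what your route buys is that it avoids the fixed point formula and reuses the Hilbert--Schmidt machinery already present, making the argument arguably more modular.

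Two small points need tightening, neither fatal. First, the parenthetical chain $|\lambda_{i}(T)|\leq s_{i}(T)$ is not valid termwise; what you actually need (and what suffices) is $|\lambda_{i}|\leq\|T\|_{op}$ together with Weyl's inequality $\sum_{i}|\lambda_{i}|\leq\|T\|_{1}$. Second, your constants $C$ in the bounds $\|\L^{+}_{\tau,\sigma}\|^{2}\leq C\tau^{2\sigma-2\delta-2}$ and $\|\L_{\tau,s,\rho}\|_{\HS}^{2}\leq C(\dim V)\tau^{2\sigma-2\delta}$ are not uniform over $\sigma\in[B,\infty)$ as written: they hide factors such as $K_{0}^{2\sigma}$ and $K_{1}^{\sigma-\delta}$ coming from $|\gamma'_{\a'}|^{\sigma}\leq(K_{0}\Upsilon_{\a})^{\sigma}$ and $\Upsilon_{\a}\leq K_{1}\tau$. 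Since $s$ ranges over an unbounded ray, you must first impose $\tau_{1}\leq(K_{0}K_{1})^{-1}$ (say) so that, e.g., $(K_{0}K_{1}\tau)^{\sigma-\delta}\leq(K_{0}K_{1}\tau)^{B-\delta}$, and only then choose $B$ and shrink $\tau_{1}$; this is precisely the bookkeeping the paper performs with its conditions $\tau_{1}\leq\frac{1}{2}K^{-1}$, $K^{B}\geq C_{2}$ and $K^{4}\tau_{1}^{1-2\delta/B}\leq\frac{1}{2}$, and it slots into your assembly step without changing anything else.
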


\begin{rem}
A crucial restriction in Proposition \ref{prop:pointiwse-bound} is
$\Re(s)>2\delta$ that results from the presence of $\L_{\tau,s,\rho}^{2}$
in the definition of $\zeta_{\tau,\rho}$.
\end{rem}

\begin{proof}[Proof of Proposition \ref{prop:pointiwse-bound}]
We can write
\[
\zeta_{\tau,\rho}(s)=\det(1-\L_{\tau,s,\rho}^{2})=\exp\left(-\sum_{k=1}^{\infty}\frac{1}{k}\Tr\L_{\tau,s,\rho}^{2k}\right)
\]
whenever the series inside the exponential is absolutely convergent.
We have if $x\in D_{b}$
\[
\L_{\tau,s,\rho}^{2k}[f](x)=\sum_{\substack{\a_{1},\ldots,\a_{2k}\in\ZZ(\tau)\\
\a_{1}\rsa\a_{2}\rsa\cdots\rsa\a_{2k}\rsa b
}
}\gamma'_{\a_{1}'\a_{2}'\ldots\a'_{2k}}(x)^{s}\rho(\gamma_{\a_{1}'\a_{2}'\ldots\a'_{2k}}^{-1})f(\gamma_{\a_{1}'\a_{2}'\ldots\a'_{2k}}(x)).
\]
Carefully applying the Lefschetz fixed point formula \cite[Lemma 15.9]{Borthwick}
now gives
\[
\Tr\L_{\tau,s,\rho}^{2k}=\sum_{\substack{\a_{1},\ldots,\a_{2k}\in\ZZ(\tau)\\
\a_{2k}\rsa\a_{1}\rsa\a_{2}\rsa\cdots\rsa\a_{2k}
}
}\Tr[\rho(\gamma_{\a_{1}'\a_{2}'\ldots\a'_{2k}}^{-1})]\frac{\gamma'_{\a_{1}'\a_{2}'\ldots\a'_{2k}}(x_{\a_{1}'\a_{2}'\ldots\a'_{2k}})^{s}}{1-\gamma'_{\a_{1}'\a_{2}'\ldots\a'_{2k}}(x_{\a_{1}'\a_{2}'\ldots\a'_{2k}})}
\]
where $x_{\a_{1}'\a_{2}'\ldots\a'_{2k}}\in\R$ is the unique attracting
fixed point of $\gamma{}_{\a_{1}'\a_{2}'\ldots\a'_{2k}}$. Let $b$
denote the last letter of $\a_{2k}$.

By using Lemmas \ref{lem:deriv-to-upsilon} and \ref{lem:coarse-homomorphism}
($2k-1$ times) we obtain
\begin{align*}
\gamma'_{\a_{1}'\a_{2}'\ldots\a'_{2k}}(x_{\a_{1}'\a_{2}'\ldots\a'_{2k}}) & \leq K_{0}\Upsilon_{\a'_{1}\a'_{2}\cdots\a'_{2k-1}\a{}_{2k}}\\
 & \leq K_{0}K_{2}^{2k-1}\Upsilon_{\a{}_{1}}\ldots\Upsilon_{\a{}_{2k}}.
\end{align*}
Now using Lemma \ref{lem:upsilon-to-tau} we obtain
\[
\gamma'_{\a_{1}'\a_{2}'\ldots\a'_{2k}}(x_{\a_{1}'\a_{2}'\ldots\a'_{2k}})\leq K_{0}K_{1}^{2k-1}K_{2}^{2k-1}\tau^{2k}\leq K^{k}\tau^{2k}
\]
 for some $K>1$. We now assume 
\[
\tau_{1}\leq\frac{1}{2}K^{-1}
\]
so that given $\tau\leq\tau_{1}$ we have 
\[
\gamma'_{\a_{1}'\a_{2}'\ldots\a'_{2k}}(x_{\a_{1}'\a_{2}'\ldots\a'_{2k}})\leq2^{-2k}.
\]
We may also use the simple estimate $\Tr[\rho(\gamma_{\a_{1}'\a_{2}'\ldots\a'_{2k}}^{-1})]\leq\dim V$.
Putting this together gives
\[
|\Tr\L_{\tau,s,\rho}^{2k}|\leq(\dim V)\left(K\tau\right)^{2ks}|\ZZ(\tau)|^{2k}.
\]
Hence by Lemma \ref{lem:size-of-partition} we obtain
\[
|\Tr\L_{\tau,s,\rho}^{2k}|\leq(\dim V)\left(K\tau\right)^{2ks}C_{2}^{2k}\tau^{-2k\delta}=(\dim V)K^{2ks}C_{2}^{2k}\tau^{(2s-2\delta)k}.
\]
Choose $B$ such that $B>\max(1,2\delta)$ and 
\[
K^{B}\geq C_{2},
\]
with the effect of obtaining $|\Tr\L_{\tau,s,\rho}^{2k}|\leq(\dim V)K^{4ks}\tau^{(2s-2\delta)k}=(\dim V)(K^{4}\tau^{(2-\frac{2\delta}{s})})^{sk}$
when $s\geq B$. Now decrease $\tau_{1}$, if necessary, to ensure
\[
K^{4}\tau_{1}^{(1-\frac{2\delta}{B})}\leq2^{-1}.
\]
Note that $1-\frac{2\delta}{B}>0$, so this is indeed possible. The
result of our choices is that when $s\geq B\geq1$ and $\tau\leq\tau_{1}$
\[
\left|\det(1-\L_{\tau,s,\rho}^{2})\right|=\exp\left(\mathrm{Re}\left(-\sum_{k=1}^{\infty}\frac{1}{k}\Tr\L_{\tau,s,\rho}^{2k}\right)\right)\geq\exp\left(-(\dim V)\sum_{k=1}^{\infty}\left(\frac{\tau}{2}\right)^{sk}\right),
\]
so 
\[
-\log|\zeta_{\tau,\rho}(s)|\leq(\dim V)\sum_{k=1}^{\infty}\left(\frac{\tau}{2}\right)^{sk}\leq(\dim V)\sum_{k=1}^{\infty}\left(\frac{\tau}{2}\right)^{k}\leq(\dim V)\tau.
\]
\end{proof}

\section{The expectation of the Hilbert-Schmidt norm of the transfer operator\label{sec:The-expectation-of}}

\subsection{Statement of the main probabilistic estimate}

The main estimate we wish to prove in this Section \ref{sec:The-expectation-of}
is the following.
\begin{prop}
\label{prop:main-prob-estimate}Given $H_{1}>0$, $\sigma_{1}>\frac{3\delta}{4}$,
and $Q>\sigma_{1}$ there are constants $\epsilon=\epsilon(\Gamma,H_{1},Q,\sigma_{1})>0$,
and $n_{0}=n_{0}(\Gamma,H_{1},Q,\sigma_{1})>0$ such that if $\tau=n^{-\frac{2}{\delta}}$,
$n\geq n_{0}$, $s=\sigma+it$ with $\sigma\in[\sigma_{1},Q]$ and
$|t|\leq H_{1}$ we have
\[
\E_{n}\|\L_{\tau,s,\rho_{n}^{0}}\|_{\HS}^{2}\leq n^{-\epsilon}.
\]
\end{prop}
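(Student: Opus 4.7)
The plan is to start from the deterministic identity for $\|\L_{\tau,s,\rho_{n}^{0}}\|_{\HS}^{2}$ provided by Lemma~\ref{lem:expression-for-HS-norm}, take expectations inside the double sum, and reduce the problem to a weighted counting problem for pairs of words. Since $\ZZ(\tau)\subset\W_{\geq 2}$ for $\tau$ small (Lemma~\ref{lem:Zbar-is-good}), both $\gamma_{\a_{1}'}$ and $\gamma_{\a_{2}'}$ send $D_{b}$ into a fixed compact subset of $D_{a}$, so the Bergman kernel factor appearing in Lemma~\ref{lem:expression-for-HS-norm} is uniformly bounded by the explicit formula~(\ref{eq:explicit-bergman}). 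Using Lemma~\ref{lem:deriv-to-upsilon} together with the bounded-distortion estimates of Lemma~\ref{lem:standard-estimates} and the hypotheses $|t|\leq H_{1}$, $\sigma\in[\sigma_{1},Q]$, one obtains $|\gamma_{\a_{i}'}'(x)^{s}|\leq C\Upsilon_{\a_{i}}^{\sigma}$ uniformly in $x\in D_{b}$. Linearity of expectation then gives the majorant
\[
\E_{n}\|\L_{\tau,s,\rho_{n}^{0}}\|_{\HS}^{2}\;\leq\;C\sum_{\a_{1},\a_{2}\in\ZZ(\tau)}\Upsilon_{\a_{1}}^{\sigma}\Upsilon_{\a_{2}}^{\sigma}\,\bigl|\E_{n}\Tr\rho_{n}^{0}(\gamma_{\a_{1}'}\gamma_{\a_{2}'}^{-1})\bigr|.
\]

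I would then split the sum according to the algebraic type of $\gamma_{\a_{1}'}\gamma_{\a_{2}'}^{-1}\in\Gamma$ into three pieces: the \emph{identity} set $\{\a_{1}'=\a_{2}'\}$ (forced by freeness of $\Gamma$); the \emph{proper-power} set $\Pairs(\tau)$ of pairs with $\gamma_{\a_{1}'}\gamma_{\a_{2}'}^{-1}$ a non-trivial proper power in $\Gamma$; and the \emph{generic} complement. For the identity piece the trace equals $n-1$, and using $\Upsilon_{\a}\asymp\tau$ (Lemma~\ref{lem:upsilon-to-tau}) together with $|\ZZ(\tau)|\asymp\tau^{-\delta}$ (Lemma~\ref{lem:size-of-partition}) the contribution is $\lesssim n\cdot\tau^{2\sigma-\delta}$; with $\tau=n^{-2/\delta}$ this reads $n^{3-4\sigma/\delta}$, which is $n^{-\epsilon}$ for some $\epsilon>0$ precisely when $\sigma>\tfrac{3}{4}\delta$, and this is the source of the critical exponent in the hypothesis. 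For the generic complement, Theorem~\ref{thm:broder-shamir-puder} yields $|\E_{n}\Tr\rho_{n}^{0}(\gamma)|\lesssim\mathrm{poly}(|\a_{1}|+|\a_{2}|)/n$; since $|\a_{i}|\lesssim\log n$ by Lemma~\ref{lem:tau-to-word-length}, summing trivially over $|\ZZ(\tau)|^{2}\asymp n^{4}$ pairs with weight $\tau^{2\sigma}$ again produces order $n^{3-4\sigma/\delta}$ up to polylog factors.

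The main obstacle is the \emph{proper-power} contribution: Theorem~\ref{thm:broder-shamir-puder} supplies a bound of size only $O(\mathrm{poly}(|\a_{1}|+|\a_{2}|))$ for such traces, with no gain in $n$. Polynomial decay must therefore come entirely from an \emph{a priori} bound on the $\Upsilon$-weighted size of $\Pairs(\tau)$, which is the content of Proposition~\ref{prop:key-prop}. The approach, as outlined in $\S$\ref{subsec:An-overview-of}, is combinatorial: the identity $\gamma_{\a_{1}'}\gamma_{\a_{2}'}^{-1}=w^{k}$ with $k\geq 2$ implies, after an absolutely bounded number of cut/mirror/reglue surgeries on the sequences $\a_{1}'$ and $\a_{2}'$, the forced appearance of a long common subword in the two resulting sequences. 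Each surgery costs only a bounded multiplicative constant on the $\Upsilon$-weights, thanks to coarse multiplicativity (Lemma~\ref{lem:coarse-homomorphism}) and the mirror estimate (Lemma~\ref{lem:mirror-1}). The residual free sums over the cut pieces are then controlled by the pressure estimate of Lemma~\ref{lem:Pressure-estimate}, and Bowen's vanishing of $P(\sigma)$ at $\sigma=\delta$, combined with the optimality of working at exponent $\delta/2$ on the weights, is what produces the final polynomial decay beyond the threshold $\sigma>\tfrac{3}{4}\delta$.
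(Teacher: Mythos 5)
Your proposal is correct and follows essentially the same route as the paper: the majorization of $\E_{n}\|\L_{\tau,s,\rho_{n}^{0}}\|_{\HS}^{2}$ via the Bergman-kernel and derivative bounds (the paper's Lemma \ref{lem:maj-exp-hs-norm}), the three-way split of the trace expectations according to Theorem \ref{thm:broder-shamir-puder} (Lemma \ref{lem:splitting-into-three-sums}), the exponent arithmetic with $\tau=n^{-2/\delta}$ locating the $\frac{3}{4}\delta$ threshold, and the reduction of the proper-power contribution to the cut/mirror/reglue counting bound of Proposition \ref{prop:key-prop} are exactly the paper's steps.
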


\subsection{The expected value of the trace of a word}

The key probabilistic estimate for $\rho_{n}^{0}$ that we use in
this paper is essentially due to Broder-Shamir \cite{BroderShamir},
and in the stronger form that we use it can be deduced from the work
of Puder \cite{PUDER}. We will explain how to deduce the result below.
\begin{thm}[Broder-Shamir, Puder]
\label{thm:broder-shamir-puder}Let $\gamma\in\Gamma$ have reduced
word length $t$. Then for any $n>t^{2}$ 
\[
\left|\E_{n}(\Tr[\rho_{n}^{0}(\gamma)])\right|\leq\begin{cases}
n-1 & \text{if \ensuremath{\gamma=\mathrm{id}},}\\
d(q)-1+\frac{t^{4}}{n-t^{2}} & \text{if \ensuremath{\gamma=\gamma_{0}^{q}}, \ensuremath{q\geq2} and \ensuremath{q} maximal,}\\
\frac{t^{4}}{n-t^{2}} & \text{otherwise}.
\end{cases}
\]
Here $d(q)$ is the number of divisors of $q$.
\end{thm}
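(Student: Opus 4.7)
The plan is to reduce the estimate to the classical Broder--Shamir computation of expected fixed points of random word maps, with the extra precision in the power case supplied by Puder's refinement. The starting observation is that $\std_n = \mathbf{1} \oplus \rho_n^0$, so
\[
\Tr[\rho_n^0(\gamma)] \;=\; \Tr[\std_n(\phi_n(\gamma))] - 1 \;=\; \mathrm{fix}(\phi_n(\gamma)) - 1,
\]
where $\mathrm{fix}(\sigma)$ counts the fixed points of $\sigma \in S_n$. The $\gamma = \mathrm{id}$ case is then immediate: $\phi_n(\mathrm{id})$ is the identity permutation, $\mathrm{fix} = n$, and the trace equals $n-1$.

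For nontrivial $\gamma$, I would write $\gamma = \gamma_{a_1}^{\epsilon_1} \cdots \gamma_{a_t}^{\epsilon_t}$ as a reduced word and compute $\E_n[\mathrm{fix}(\phi_n(\gamma))]$ by a union bound over $i \in [n]$: an index $i$ is a fixed point precisely when the sequence $i \to \phi_n(\gamma_{a_t}^{\epsilon_t})(i) \to \cdots$ closes up after $t$ steps. Since the $\phi_n(\gamma_a)$ are independent uniform permutations of $[n]$, this is a random-walk/first-passage style calculation which the Broder--Shamir argument \cite{BroderShamir} analyzes by induction on the word length, conditioning on whether successive letters introduce a new image or force a coincidence. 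The estimate $\mathrm{fix} \leq 1 + \tfrac{t^4}{n-t^2}$ in the generic case and the extra $d(q)$-type contribution in the power case both drop out of this analysis once $n > t^2$, ensuring that the conditioning events are well-defined.

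The precise $d(q) - 1 + O(t^4/(n-t^2))$ bound in the power case requires more care, and here I would cite Puder's theorem on the \emph{primitivity rank} of a word \cite{PUDER}. When $\gamma = \gamma_0^q$ with $q$ maximal, the fixed-point structure admits contributions from each divisor $d \mid q$, since for such $d$ the permutation $\phi_n(\gamma_0^d)$ has every $d$-cycle of $\phi_n(\gamma_0)$ contributing fixed points to $\phi_n(\gamma_0^q)$; summing these divisor contributions gives the main term $d(q)$, and Puder's error analysis controls the remainder by $t^4/(n - t^2)$. Subtracting $1$ from $\E_n[\mathrm{fix}]$ yields the stated bound.

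The main obstacle is bookkeeping: matching the exact constants $d(q) - 1$ and the denominator $n - t^2$ to Puder's formulation (which is usually phrased in terms of $\E_n[\mathrm{fix}(\phi_n(\gamma))] = 1 + O(n^{-\pi(\gamma)+1})$ for primitivity rank $\pi(\gamma)$) requires unpacking his asymptotic expansion at the relevant order and checking that the implicit constants can be taken polynomial in $t$, as needed when $t$ is allowed to grow with $n$. Once this identification is made, the three cases follow directly without further combinatorial work.
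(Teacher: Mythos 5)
There is a genuine gap in how you handle the generic (non-power, non-identity) case. The theorem bounds $\left|\E_{n}(\Tr[\rho_{n}^{0}(\gamma)])\right|$, i.e.\ it requires \emph{two-sided} control: writing $\Tr[\rho_n^0(\gamma)]=\mathrm{fix}(\phi_n(\gamma))-1$, you need both $\E_n[\mathrm{fix}]\leq 1+\frac{t^4}{n-t^2}$ and $\E_n[\mathrm{fix}]\geq 1-\frac{t^4}{n-t^2}$. The Broder--Shamir union-bound/conditioning analysis that you invoke for this case produces only the upper bound on the expected number of fixed points (and indeed you write the conclusion as an inequality $\mathrm{fix}\leq 1+\frac{t^4}{n-t^2}$); the trivial lower bound $\E_n[\mathrm{fix}]\geq 0$ is far too weak. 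This is precisely the point flagged in the paper: Broder--Shamir alone does not suffice here because the application involves complex-weighted sums of the traces, so cancellation cannot be exploited and a lower bound of the same quality is indispensable. As structured, your argument proves the upper-bound half of the generic case and leaves the lower-bound half unaddressed.

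The repair is to base \emph{all} cases, not just the power case, on Puder's exact expansion rather than on the first-moment Broder--Shamir computation: for $n>t$ one has an absolutely convergent Laurent series $\E_n(\Tr[\rho_n(\gamma)])=\sum_{S\geq0}a_S(\gamma)n^{-S}$ with integer coefficients, where $a_0(\gamma)=1$ unless $\gamma$ is a proper power, in which case $a_0(\gamma)=|\Crit(\gamma)|+1=d(q)$, and where $|a_S(\gamma)|\leq t^{2S+2}$ for every $S$. Subtracting the trivial summand and summing the tail geometrically gives exactly $\sum_{S\geq1}t^{2S+2}n^{-S}=\frac{t^4}{n-t^2}$ for $n>t^2$, with the sign information built in. Note also that what you call ``bookkeeping'' --- checking that the implicit constants in $\E_n[\mathrm{fix}]=1+O(n^{-\pi(\gamma)+1})$ are polynomial in $t$ --- is not incidental: the coefficient bound $|a_S(\gamma)|\leq t^{2S+2}$ and the identification $|\Crit(\gamma)|=d(q)-1$ for $\gamma=\gamma_0^q$ are the substantive inputs from Puder's work, and they constitute essentially the whole proof once the reduction $\Tr[\rho_n^0]=\Tr[\rho_n]-1$ is made. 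Your divisor heuristic for the leading term $d(q)$ is the right intuition, but as stated it treats $\phi_n(\gamma_0)$ as if it were a uniform permutation, which it need not be since $\gamma_0$ is itself a word in the generators; this too is subsumed by citing the exact value of $a_0(\gamma)$.
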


\begin{rem}
Broder and Shamir \cite{BroderShamir} only prove upper bounds for
$\E_{n}(\Tr[\rho_{n}^{0}(\gamma)])$, whereas it is crucial for us
to have upper and lower bounds, since we deal with \emph{complex }weighted
sums of the random variables $\Tr[\rho_{n}^{0}(\gamma)]$.
\end{rem}

\begin{proof}[Deduction of Theorem \ref{thm:broder-shamir-puder}]
Let $\gamma$ be an element of the non-abelian free group $\Gamma$
with reduced word length $t$. Note that Theorem \ref{thm:broder-shamir-puder}
is trivial if $\gamma=\mathrm{id},$ so we assume this is not the
case. Puder proves in \cite[pg. 885]{PUDER} that for $n>t$ one has
an absolutely convergent Laurent series
\begin{equation}
\E_{n}(\Tr[\rho_{n}(\gamma)])=\sum_{S=0}^{\infty}\frac{a_{S}(\gamma)}{n^{S}}\label{eq:Laurent}
\end{equation}
where each $a_{S}(\gamma)\in\Z$. Puder associates to $\gamma$ a
quantity $\pi(\gamma)\in\N_{0}\cup\{\infty\}$ called the \emph{primitivity}\footnote{For good reasons, `primitivity' in the setting of \cite{PUDER} \uline{does
not} coincide with the notion of primitive closed geodesics, although
they are related. However, this is not relevant to the current proof.}\emph{ rank} of $\gamma$\emph{. }For our purposes, the only thing
we need to know is that $\pi(\gamma)=0$ if and only if $\gamma=\mathrm{id}$,
and $\pi(\gamma)=1$ if and only if $\gamma$ is a proper power. Puder
also considers a certain finite set $\Crit(\gamma)$ of subgroups
of the free group. Again, the only thing we need to know is that if
$\text{if \ensuremath{\gamma=\gamma_{0}^{q}}, \ensuremath{q\geq2} and \ensuremath{q} maximal,}$
then $|\Crit(\gamma)|=d(q)-1$ \cite[pg. 67]{PP15}. 

The following facts are proven by Puder in \cite[pp. 885-887]{PUDER}:
\begin{itemize}
\item We have $a_{0}(\gamma)=1$, unless $\pi(\gamma)=1$, in which case
\[
a_{0}(\gamma)=|\Crit(\gamma)|+1.
\]
\item If $1\leq S<\pi(\gamma)-1$ then 
\[
a_{S}(\gamma)=0.
\]
\item If $\pi(\gamma)\neq1$ then
\[
a_{\pi(\gamma)-1}=|\Crit(\gamma)|.
\]
\item For any $S\geq0$
\begin{align*}
|a_{S}(\gamma)| & \leq t^{2S+2}.
\end{align*}
\end{itemize}
Since $\Tr[\rho_{n}(\gamma)]=1+\Tr[\rho_{n}^{0}(\gamma)]$, $\text{if \ensuremath{\gamma=\gamma_{0}^{q}}, \ensuremath{q\geq2} and \ensuremath{q} maximal,}$
we have from (\ref{eq:Laurent})
\[
|\E_{n}(\Tr[\rho_{n}^{0}(\gamma)])|\leq d(q)-1+\sum_{S=1}^{\infty}\frac{t^{2S+2}}{n^{S}}=d(q)-1+\frac{t^{4}}{n-t^{2}}.
\]
If $\gamma$ is neither a proper power nor the identity then the estimate
is similar, but there is no $d(q)-1$ term since $\pi(\gamma)\geq2$.
\end{proof}

\subsection{Majorization of the expectation of the Hilbert-Schmidt norm}
\begin{lem}
\label{lem:maj-exp-hs-norm}Given $Q,H_{1}>0$ there is a constant
$C=C(\Gamma,H_{1},Q)$ such that if $\tau\leq\tau_{0}$ and $s=\sigma+it$
with $\sigma\in(0,Q]$ and $|t|\leq H_{1}$, 
\begin{equation}
\E_{n}\|\L_{\tau,s,\rho_{n}^{0}}\|_{\HS}^{2}\leq C\tau^{2\sigma}\sum_{\a,\b\in\ZZ(\tau)}|\E_{n}[\Tr(\rho_{n}^{0}(\gamma_{\a'}\gamma_{\b'}^{-1}))]|.\label{eq:maj-exp-hs-norm}
\end{equation}
\end{lem}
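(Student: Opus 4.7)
The plan is to start from the exact expression for $\|\L_{\tau,s,\rho_n^0}\|_{\HS}^2$ provided by Lemma \ref{lem:expression-for-HS-norm}, take expectations of both sides, pull the absolute value through both the trace--expectation and the integral, and then bound the resulting deterministic integrand uniformly by $C\tau^{2\sigma}$. Since the integral is non-random and $\Tr[\rho_n^0(\cdot)]$ is the only random quantity on the right-hand side, linearity of expectation followed by the triangle inequality yields
\[
\E_n\|\L_{\tau,s,\rho_n^0}\|_{\HS}^2 \;\leq\; \sum_{a,b\in\A}\sum_{\substack{\a_1,\a_2\in\ZZ(\tau)\\a\rsa\a_1,\a_2\rsa b}} \bigl|\E_n[\Tr(\rho_n^0(\gamma_{\a_1'}\gamma_{\a_2'}^{-1}))]\bigr|\,J_{\a_1,\a_2}(s),
\]
where $J_{\a_1,\a_2}(s)$ denotes the absolute value of the inner integral. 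Once a uniform bound $J_{\a_1,\a_2}(s)\le C(\Gamma,H_1,Q)\tau^{2\sigma}$ is in hand, dropping the constraint that $\a_1,\a_2$ share first and last letters (which only enlarges the non-negative sum) yields exactly \eqref{eq:maj-exp-hs-norm}.

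Two of the three ingredients needed for the uniform bound on $J_{\a_1,\a_2}(s)$ are routine. For the Bergman kernel: since $\tau\le\tau_0$, Lemma \ref{lem:Zbar-is-good} gives $\ZZ(\tau)\subset\W_{\ge 2}$, so each $\a'_i$ has length $\ge 1$ and starts with $a$; its image of $D_b$ lies inside $\gamma_a(\overline{D_c})$ for some $c\neq\bar a$, and this is a compact subset of $D_a$ whose distance from $\partial D_a$ depends only on $\Gamma$. The explicit formula \eqref{eq:explicit-bergman} then gives a uniform bound $|B_{D_a}(\gamma_{\a_1'}(x),\gamma_{\a_2'}(x))|\le M(\Gamma)$. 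For the modulus of the derivative, combining Lemmas \ref{lem:deriv-to-upsilon} and \ref{lem:upsilon-to-tau} gives
\[
|\gamma'_{\a'_i}(x)|^{\sigma}\;\le\; K_0^{\sigma}\Upsilon_{\a_i}^{\sigma}\;\le\; (K_0K_1)^{Q}\tau^{\sigma}
\]
uniformly for $\sigma\in(0,Q]$ and $\a_i\in\ZZ(\tau)$.

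The main obstacle is passing from $|\gamma'_{\a'}(x)^s|$ to $|\gamma'_{\a'}(x)|^{\sigma}$, because
\[
|\gamma'_{\a'}(x)^s|\;=\;|\gamma'_{\a'}(x)|^{\sigma}\,e^{-t\arg\gamma'_{\a'}(x)},
\]
and a priori $|\arg\gamma'_{\a'}(x)|$ could grow with $|\a|$, which by Lemma \ref{lem:tau-to-word-length} is of order $\log\tau^{-1}$. I would resolve this by showing $|\arg\gamma'_{\a'}(x)|\le C_{\mathrm{arg}}(\Gamma)$ uniformly in $\a$ and $x\in D_b$. Differentiating the chain-rule identity $\log\gamma'_{\a'}(z)=\sum_{j=1}^{k-1}\log\gamma'_{a_j}(\gamma_{a_{j+1}\cdots a_{k-1}}(z))$ produces
\[
(\log\gamma'_{\a'})'(z)\;=\;\sum_{j=1}^{k-1}\frac{\gamma''_{a_j}(y_j)}{\gamma'_{a_j}(y_j)}\,\gamma'_{a_{j+1}\cdots a_{k-1}}(z),
\]
where $y_j=\gamma_{a_{j+1}\cdots a_{k-1}}(z)\in D_{a_{j+1}}$. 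The ratio $\gamma''_{a_j}/\gamma'_{a_j}$ is a M\"obius expression whose only pole lies in $\overline{D_{\bar a_j}}$, hence bounded on $D_{a_{j+1}}$ (disjoint from $\overline{D_{\bar a_j}}$ by the word condition); meanwhile Lemma \ref{lem:standard-estimates} gives $|\gamma'_{a_{j+1}\cdots a_{k-1}}(z)|\le C\theta^{k-1-j}$, and summing the geometric series yields $|(\log\gamma'_{\a'})'(z)|\le M_2(\Gamma)$ independent of $\a$. Since $\log\gamma'_{\a'}$ is real on $I_b$, integrating along a segment from any $y\in I_b$ to $x\in D_b$ produces $|\arg\gamma'_{\a'}(x)|\le M_2(\Gamma)\cdot\mathrm{diam}(D_b)=:C_{\mathrm{arg}}(\Gamma)$, and therefore $|\gamma'_{\a'}(x)^s|\le e^{H_1 C_{\mathrm{arg}}}|\gamma'_{\a'}(x)|^{\sigma}$. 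Multiplying the three factors, integrating over $D_b$ (of bounded Lebesgue measure), and packaging all constants into $C=C(\Gamma,H_1,Q)$ gives the uniform bound on $J_{\a_1,\a_2}(s)$ and hence \eqref{eq:maj-exp-hs-norm}.
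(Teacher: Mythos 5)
Your proposal is correct and follows essentially the same route as the paper: take expectations in Lemma \ref{lem:expression-for-HS-norm}, apply the triangle inequality, bound the Bergman kernel uniformly via compact containment of $\gamma_{\a_i'}(D_b)$ in $D_a$, bound $|\gamma'_{\a_i'}(x)|^{\sigma}$ by a constant times $\tau^{\sigma}$ (your combination of Lemmas \ref{lem:deriv-to-upsilon} and \ref{lem:upsilon-to-tau} is exactly the paper's Lemma \ref{lem:deriv-to-tau}), and then drop the first/last-letter constraints. The one spot where you diverge is the argument of the derivative: the paper disposes of this in one line, since by the convention in $\S$\ref{subsec:Functional-spaces-and} the power $\gamma'_{\a'}(x)^{s}$ is the principal-branch continuation ($\gamma'_{\a'}$ is positive on $I_b$ and never a negative real on $D_b$), so $|\arg\gamma'_{\a'}(x)|<\pi$ and the extra factor is just $e^{\pi H_1}$; your chain-rule/distortion estimate giving a uniform bound $C_{\mathrm{arg}}(\Gamma)$ is a valid, self-contained substitute (and in effect justifies the branch convention), but it is not needed for the stated bound.
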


\begin{proof}
Suppose we are given $H_{1}$ as in the statement of the lemma. Taking
the expectation of the expression given in Lemma \ref{lem:expression-for-HS-norm}
gives
\begin{equation}
\E_{n}\|\L_{\tau,s,\rho_{n}^{0}}\|_{\HS}^{2}=\sum_{a,b\in\A}\sum_{\substack{\a_{1},\a_{2}\in\ZZ(\tau)\\
a\rsa\a_{1},\a_{2}\rsa b
}
}\E_{n}[\Tr(\rho_{n}^{0}(\gamma_{\a_{1}'}\gamma_{\a'_{2}}^{-1}))]\int_{D_{b}}\gamma'_{\a_{1}'}(x)^{s}\overline{\gamma'_{\a_{2}'}(x)^{s}}B_{D_{a}}(\gamma_{\a_{1}'}(x),\gamma_{\a_{2}'}(x))dm(x).\label{eq:hs-exp-formula}
\end{equation}
We wish to estimate the modulus of all quantities appearing in the
integral on the right hand side. Firstly the assumption that $\tau\leq\tau_{0}$
ensures $\ZZ(\tau)\subset\W_{\geq2}$, and so each $\gamma_{\a'_{1}},\gamma_{\a'_{2}}$
maps $D_{b}$ into a compact subset of $D_{a}$. It then follows from
the explicit expression for the Bergman kernel in (\ref{eq:explicit-bergman})
that there is $K=K(\Gamma)>0$ such that 
\begin{equation}
B_{D_{a}}(\gamma_{\a_{1}'}(x),\gamma_{\a_{2}'}(x))\leq K\label{eq:bergman-maj}
\end{equation}
for all $a,x,\a_{1},\a_{2}$ as in (\ref{eq:hs-exp-formula}).

By definition, if $s=\sigma+it$,
\begin{align*}
(\gamma'_{\a'_{1}}(x))^{s} & =\exp\left((\sigma+it)(\log|\gamma_{\a'_{1}}'(x)|+i\arg(\gamma'_{\a'_{1}}(x))\right)
\end{align*}
where $\arg$ is the principal value of the argument, $\arg:\C-\R_{\leq0}\to(-\pi,\pi)$.
Hence
\[
|(\gamma'_{\a'_{1}}(x))^{s}|=\exp(\sigma\log|\gamma'_{\a'_{1}}(x)|-t\arg(\gamma'_{\a'_{1}}(x))\leq e^{\pi|t|}|\gamma'_{\a'_{1}}(x)|^{\sigma}.
\]
Therefore by Lemma \ref{lem:deriv-to-tau} for some $c=c(H_{1},Q)>0$
we have for $|t|\leq H_{1}$
\begin{equation}
|(\gamma'_{\a'_{1}}(x))^{s}|\leq c\tau^{\sigma}.\label{eq:deriv-maj}
\end{equation}
for all $\a'_{1}$, $x$ in (\ref{eq:hs-exp-formula}), and the same
for $\a_{2}$ in place of $\a_{1}$. Hence applying the triangle inequality
to (\ref{eq:hs-exp-formula}) and using (\ref{eq:bergman-maj}) and
(\ref{eq:deriv-maj}), together with the fact that the $D_{b}$ have
finite Lebesgue measure gives
\begin{align*}
\E_{n}\|\L_{\tau,s,\rho_{n}^{0}}\|_{\HS}^{2} & \leq C_{0}\tau^{2\sigma}\sum_{a,b\in\A}\sum_{\substack{\a_{1},\a_{2}\in\ZZ(\tau)\\
a\rsa\a_{1},\a_{2}\rsa b
}
}|\E_{n}[\Tr(\rho_{n}^{0}(\gamma_{\a_{1}'}\gamma_{\a'_{2}}^{-1}))]|\\
 & \leq C\tau^{2\sigma}\sum_{\a,\b\in\ZZ(\tau)}|\E_{n}[\Tr(\rho_{n}^{0}(\gamma_{\a'}\gamma_{\b'}^{-1}))]|
\end{align*}
for some $C=C(\Gamma,H_{1},Q)$ whenever $|t|\leq H_{1}$ and $\tau\leq\tau_{0}$. 
\end{proof}
The next step is to input the estimates of Theorem \ref{thm:broder-shamir-puder}
into the estimate of Lemma \ref{lem:maj-exp-hs-norm}. To organize
the result we introduce, for each $q\in\Z_{\geq2}$, the set 
\begin{align*}
\Pairs(\tau;q) & \eqdf\{(\a,\b)\in\ZZ(\tau)\times\ZZ(\tau),\:\gamma_{\a'}\gamma_{\b'}^{-1}\text{ is a \ensuremath{q^{th}} power in \ensuremath{\Gamma} with }\,q\text{ maximal}\,\},
\end{align*}
and 
\[
\Pairs(\tau)\eqdf\bigcup_{q\geq2}\Pairs(\tau;q).
\]
Notice that in the above, $\gamma_{\a'}\gamma_{\b'}^{-1}\neq\mathrm{id}$.
We will show
\begin{lem}
\label{lem:splitting-into-three-sums}Given $Q,H_{1},\alpha,\epsilon>0$,
there are constants $C=C(\Gamma,H_{1},Q)>0$ and $n_{0}=n_{0}(\Gamma,\epsilon,\alpha)$
such that if $\tau=n^{-\alpha}$ and $s=\sigma+it$ with $\sigma\in(0,Q]$,
$|t|\leq H_{1}$, and $n\geq n_{0}$, we have 
\[
\E_{n}\|\L_{\tau,s,\rho_{n}^{0}}\|_{\HS}^{2}\leq C\tau^{2\sigma}\left(n\tau^{-\delta}+n^{\epsilon}|\Pairs(\tau)|+\frac{1}{n^{1-\epsilon}}\tau^{-2\delta}\right).
\]
\end{lem}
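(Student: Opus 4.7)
The approach is to start from the bound furnished by Lemma \ref{lem:maj-exp-hs-norm} and split the sum over $(\a,\b)\in\ZZ(\tau)\times\ZZ(\tau)$ into three pieces, mirroring the three cases in Theorem~\ref{thm:broder-shamir-puder}: (i) $\gamma_{\a'}\gamma_{\b'}^{-1}=\mathrm{id}$, (ii) $(\a,\b)\in\Pairs(\tau)$, i.e.\ $\gamma_{\a'}\gamma_{\b'}^{-1}$ is a nontrivial proper power, and (iii) all remaining pairs. Before treating the cases, I record the uniform upper bound on word length that we will use throughout: by Lemma~\ref{lem:tau-to-word-length}, for $\a,\b\in\ZZ(\tau)$ we have $|\a'|,|\b'|\leq D\log\tau^{-1}+\kappa$, hence the reduced length $t=|\gamma_{\a'}\gamma_{\b'}^{-1}|$ satisfies $t\leq 2(D\log\tau^{-1}+\kappa)$. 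With $\tau=n^{-\alpha}$ this yields $t=O_{\Gamma,\alpha}(\log n)$, so $t^{4}\leq n^{\epsilon}$ and $n-t^{2}\geq n/2$ once $n\geq n_{0}(\Gamma,\epsilon,\alpha)$.

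\emph{Case (i).} Because the encoding $\a\mapsto\gamma_{\a}$ is one-to-one, $\gamma_{\a'}=\gamma_{\b'}$ forces $\a'=\b'$. For each $\a\in\ZZ(\tau)$ there are at most $|\A|=2r$ elements $\b\in\ZZ(\tau)$ sharing the same prefix $\a'$, so the number of such pairs is at most $2r\cdot|\ZZ(\tau)|\leq 2rC_{2}\tau^{-\delta}$ by Lemma~\ref{lem:size-of-partition}. Each contributes at most $|\E_{n}[\Tr\rho_{n}^{0}(\mathrm{id})]|=n-1\leq n$, giving a total of $O(n\tau^{-\delta})$.

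\emph{Case (ii).} By Theorem~\ref{thm:broder-shamir-puder}, each pair contributes at most $d(q)-1+\frac{t^{4}}{n-t^{2}}$. Since $\gamma_{0}^{q}$ has reduced length at least $q$, we have $q\leq t=O(\log n)$ and hence $d(q)-1\leq q\leq O(\log n)\leq n^{\epsilon}$, and similarly $\frac{t^{4}}{n-t^{2}}\leq \frac{2t^{4}}{n}\leq n^{\epsilon}$, for $n\geq n_{0}$. Thus Case (ii) contributes at most $n^{\epsilon}|\Pairs(\tau)|$ (up to a constant absorbed into $C$).

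\emph{Case (iii).} Here each term is bounded by $\frac{t^{4}}{n-t^{2}}\leq n^{\epsilon-1}$ for $n\geq n_{0}$, and the number of pairs is at most $|\ZZ(\tau)|^{2}\leq C_{2}^{2}\tau^{-2\delta}$, so the contribution is $O(n^{\epsilon-1}\tau^{-2\delta})$. Summing the three bounds and absorbing constants depending only on $\Gamma,H_{1},Q$ into $C$ gives
\[
\E_{n}\|\L_{\tau,s,\rho_{n}^{0}}\|_{\HS}^{2}\leq C\tau^{2\sigma}\Bigl(n\tau^{-\delta}+n^{\epsilon}|\Pairs(\tau)|+\tfrac{1}{n^{1-\epsilon}}\tau^{-2\delta}\Bigr),
\]
as required. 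The computation is essentially routine once Lemma~\ref{lem:maj-exp-hs-norm} and Theorem~\ref{thm:broder-shamir-puder} are in hand; the only point needing mild care is the counting in Case (i), where injectivity of the Schottky encoding is used, and the observation that the \emph{a priori} logarithmic word length bound makes all the $t^{4}$ factors absorbable into an arbitrarily small power of $n$.
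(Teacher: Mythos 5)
Your proposal is correct and follows essentially the same route as the paper: start from Lemma \ref{lem:maj-exp-hs-norm}, use Lemma \ref{lem:tau-to-word-length} to get the $O(\log n)$ word-length bound that both legitimizes applying Theorem \ref{thm:broder-shamir-puder} (since $n>t^{2}$ for $n\geq n_{0}$) and absorbs the $d(q)$ and $t^{4}/(n-t^{2})$ factors into $n^{\epsilon}$, then split into the identity/proper-power/generic cases with exactly the same counts ($|\A|\,|\ZZ(\tau)|$, $|\Pairs(\tau)|$, and $|\ZZ(\tau)|^{2}$ via Lemma \ref{lem:size-of-partition}). The only cosmetic difference is your phrasing of the identity-case count (at most $|\A|$ extensions of a given prefix $\a'$), which is equivalent to the paper's bound.
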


\begin{proof}
We will input Theorem \ref{thm:broder-shamir-puder} into Lemma \ref{lem:maj-exp-hs-norm}.
For this to be valid we need to control the word lengths of elements
of $\ZZ(\tau)$. By Lemma \ref{lem:tau-to-word-length}, all $\a\in\ZZ(\tau)$
have $|\a|\leq c\log\tau^{-1}+\kappa$, so if $\tau=n^{-\alpha}$
with $\alpha>0$, 
\[
|\a|\leq c\alpha\log n+\kappa<\frac{1}{2}n^{\frac{1}{2}}
\]
for $n$ sufficiently large, say $n\geq n_{0}.$ In this case, if
$\a,\b\in\ZZ(\tau)$ the reduced word length of $\gamma_{\a'}\gamma_{\b'}^{-1}$
is 
\[
<n^{\frac{1}{2}}
\]
so we may apply Theorem \ref{thm:broder-shamir-puder} to $\E_{n}[\Tr(\rho_{n}^{0}(\gamma_{\a'}\gamma_{\b'}^{-1}))]$.
Moreover, if $t$ is the reduced word length of $\gamma_{\a'}\gamma_{\b'}^{-1}$,
we have $t\leq2c\alpha\log n+\kappa$ so for any $\epsilon>0$, we
have 
\[
\frac{t^{2}}{n-t^{2}}\leq\frac{1}{n^{1-\epsilon}}
\]
when $n\geq n_{0}$, after increasing $n_{0}$ if necessary. Finally,
in the case $\gamma_{\a'}\gamma_{\b'}^{-1}$ is a $q^{th}$ power
in the free group $\Gamma$, with $q\geq2$ we must have $q\leq t$
and so $d(q)\leq t\leq2c\alpha\log n+\kappa\leq n^{\epsilon}$ for
any $\epsilon>0$ and $n\geq n_{0}(\epsilon)$ (here we increase $n_{0}$
again if necessary). 

With these estimates in hand, we partition the range of the sum of
the right hand of (\ref{eq:maj-exp-hs-norm}) according to the following
three cases:
\begin{itemize}
\item $\gamma_{\a'}\gamma_{\b'}^{-1}$ is the identity; if this is the case
then $|\E_{n}[\Tr(\rho_{n}^{0}(\gamma_{\a'}\gamma_{\b'}^{-1}))]|=n-1\leq n$.
We observe that $\gamma_{\a'}\gamma_{\b'}^{-1}=\mathrm{id}$ implies
$\gamma_{\a'}=\gamma_{\b'}$, but since the map $\a\to\gamma_{\a}$
is one-to-one, this forces $\a'=\b'$. Therefore the number of pairs
$(\a,\b)$ of this type is $\leq|\A||\bar{Z}(\tau)|\leq|\A|C_{2}\tau^{-\delta}$
by Lemma \ref{lem:size-of-partition}. So in total, these pairs contribute
at most
\begin{equation}
C|\A|C_{2}\tau^{2\sigma}n\tau^{-\delta}\label{eq:identity-cont}
\end{equation}
 to the bound for $\E_{n}\|\L_{\tau,s,\rho_{n}^{0}}\|_{\HS}^{2}$
given in (\ref{eq:maj-exp-hs-norm}).
\item $\gamma_{\a'}\gamma_{\b'}^{-1}$ is a $q$th power with $q$ maximal,
$q\geq2$. In this case, Theorem \ref{thm:broder-shamir-puder} gives
\[
|\E_{n}[\Tr(\rho_{n}^{0}(\gamma_{\a'}\gamma_{\b'}^{-1}))]|\leq d(q)-1+\frac{1}{n^{1-\epsilon}}\leq2n^{\epsilon}
\]
for $n\geq n_{0}$. The total number of these pairs (for all possible
$q$) is $|\Pairs(\tau)|$ so in total, these pairs contribute at
most
\begin{equation}
2C\tau^{2\sigma}n^{\epsilon}|\Pairs(\tau)|\label{eq:powers-cont}
\end{equation}
to (\ref{eq:maj-exp-hs-norm}).
\item If $\gamma_{\a'}\gamma_{\b'}^{-1}$ is not the identity and not a
proper power, then Theorem \ref{thm:broder-shamir-puder} gives
\[
|\E_{n}[\Tr(\rho_{n}^{0}(\gamma_{\a'}\gamma_{\b'}^{-1}))]|\leq\frac{1}{n^{1-\epsilon}}.
\]
We overestimate how many pairs of this kind there are by counting
all pairs, of which there are $|\bar{Z}(\tau)|^{2}\leq C_{2}^{2}\tau^{-2\delta}$
by Lemma \ref{lem:size-of-partition}. So in total, these pairs contribute
at most
\begin{equation}
CC_{2}^{2}\tau^{2\sigma}\frac{\tau^{-2\delta}}{n^{1-\epsilon}}\label{eq:non-power-cont}
\end{equation}
to (\ref{eq:maj-exp-hs-norm}).
\end{itemize}
Summing up the bounds (\ref{eq:identity-cont}), (\ref{eq:powers-cont}),
and (\ref{eq:non-power-cont}) gives the result.
\end{proof}
In the next section, we will estimate $|\Pairs(\tau)|$.

\subsection{\label{subsec:Estimating-sigma2}Estimating the size of $\text{\ensuremath{\protect\Pairs}(\ensuremath{\tau})}$}

Our goal is now to prove the following proposition controlling the
size of $\Pairs(\tau)$.
\begin{prop}
\label{prop:key-prop}For any $\epsilon>0$, there is $\tau_{2}=\tau_{2}(\Gamma,\epsilon)$
such that for $\tau\leq\tau_{2}$
\[
|\Pairs(\tau)|\leq\tau^{-\delta-\epsilon}.
\]
\end{prop}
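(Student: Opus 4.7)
The plan is to replace the counting problem by an estimate of a weighted sum of $\Upsilon$'s, then exploit the rigid free-group structure that proper powers impose. First, by Lemma \ref{lem:upsilon-to-tau}, if $\a,\b\in\ZZ(\tau)$ then $\Upsilon_{\a},\Upsilon_{\b}\geq K_{1}^{-1}\tau$, so
\[
|\Pairs(\tau)|\leq K_{1}^{\delta}\tau^{-\delta}\sum_{(\a,\b)\in\Pairs(\tau)}\Upsilon_{\a}^{\delta/2}\Upsilon_{\b}^{\delta/2},
\]
and it suffices to bound the weighted sum on the right by a polynomial in $\log\tau^{-1}$, which will be $\leq\tau^{-\epsilon}$ for $\tau$ small. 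The exponent $\delta/2$ is chosen so that the pressure value $P(\delta)=0$ can be exploited at the end. I would then stratify $\Pairs(\tau)=\bigcup_{q\geq 2}\Pairs(\tau;q)$; Lemma \ref{lem:tau-to-word-length} bounds the reduced length of $\gamma_{\a'}\gamma_{\b'}^{-1}$ by $O(\log\tau^{-1})$, so only $q\leq C\log\tau^{-1}$ contribute and this outer sum is absorbed into $\tau^{-\epsilon}$.

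The combinatorial heart of the argument is the following claim: for each $(\a,\b)\in\Pairs(\tau;q)$ with $\gamma_{\a'}\gamma_{\b'}^{-1}=w^{q}$ and $w$ primitive, there is a decomposition
\[
\a'=\u_{1}\,\c\,\v_{1},\qquad \b'=\u_{2}\,\tilde{\c}\,\v_{2},
\]
where the lengths $|\u_{i}|,|\v_{i}|$ are bounded by a constant multiple of $|w|$, and $\tilde{\c}$ is either $\c$ or its mirror $\overline{\c}$. This should be extracted from the free-group identity: the reduced form of $\a'\,\overline{\b'}$ equals $w^{q}$, and identifying the cancellation zone between $\a'$ and $\overline{\b'}$, together with the periodicity of $w^{q}$, forces $\a'$ and $\b'$ to share a long common subword up to mirroring. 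The short pieces $\u_{i},\v_{i}$ absorb the cancellation seam and the two endpoints of the periodic block.

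With this decomposition in hand, Lemma \ref{lem:coarse-homomorphism} (iterated a bounded number of times), Lemma \ref{lem:mirror-1} to pass from $\tilde{\c}$ to $\c$ when needed, and the uniform bound (\ref{eq:upsilon-bounded}) on the $\Upsilon$'s of the short pieces give
\[
\Upsilon_{\a}^{\delta/2}\Upsilon_{\b}^{\delta/2}\lesssim_{|w|}\Upsilon_{\c}^{\delta}.
\]
Each pair $(\a,\b)\in\Pairs(\tau;q)$ is parameterized by the bounded data $(\u_{1},\v_{1},\u_{2},\v_{2})$ together with $\c$, so for fixed $q$
\[
\sum_{(\a,\b)\in\Pairs(\tau;q)}\Upsilon_{\a}^{\delta/2}\Upsilon_{\b}^{\delta/2}\lesssim |w|^{O(1)}\sum_{N\leq D\log\tau^{-1}+\kappa}\,\sum_{\c\in\W_{N}}\Upsilon_{\c}^{\delta}.
\]
Bowen's theorem gives $P(\delta)=0$, so Lemma \ref{lem:Pressure-estimate} with $\sigma_{1}=\delta$ bounds the inner sum uniformly in $N$. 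Summing over $N$, over $q\leq C\log\tau^{-1}$, and noting $|w|\leq C\log\tau^{-1}$ produces a factor that is polynomial in $\log\tau^{-1}$, hence $\leq\tau^{-\epsilon/2}$ for $\tau$ small. Combined with the prefactor $\tau^{-\delta}$ this yields $|\Pairs(\tau)|\leq\tau^{-\delta-\epsilon}$.

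The main obstacle is making the combinatorial decomposition precise and uniform. One must simultaneously handle the regime where $|w|$ is short (many copies of $w$, short overhead) and where $|w|$ is long (few copies but the cancellation zone and seams can both be long); the fact that $|w|$ only enters polynomially is what rescues the estimate. Clean bookkeeping of the cases---no cancellation in $\a'\,\overline{\b'}$, partial cancellation interacting with the period of $w$, and total cancellation---together with the choice of whether $\tilde{\c}=\c$ or $\tilde{\c}=\overline{\c}$, will carry most of the work.
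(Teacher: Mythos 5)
Your overall framework is the same as the paper's: bound $|\Pairs(\tau)|$ by $K_{1}^{\delta}\tau^{-\delta}\sum_{(\a,\b)}\Upsilon_{\a}^{\delta/2}\Upsilon_{\b}^{\delta/2}$ via Lemma \ref{lem:upsilon-to-tau}, stratify over $q\lesssim\log\tau^{-1}$, use coarse multiplicativity and the mirror estimate, and finish with Lemma \ref{lem:Pressure-estimate} at the exponent $\delta$ where $P(\delta)=0$. However, the combinatorial claim at the heart of your argument is false, and the counting you build on it would fail even if it were true. Write $\c$ and $\d$ for the maximal common prefix and suffix of $\a'$ and $\b'$, so $\a'=\c\AA\d$, $\b'=\c\BB\d$ and $\gamma_{\a'}\gamma_{\b'}^{-1}=\gamma_{\c}\gamma_{\AA}\gamma_{\BB}^{-1}\gamma_{\c}^{-1}$, with $\AA\overline{\BB}$ cyclically reduced and equal to $\u^{q}$ for primitive $\u$ (so $|w|=|\u|$). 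Now take $\u$ short, $q=2$, and $\c,\d$ both of length comparable to $\log\tau^{-1}$: then $\a'$ and $\b'$ agree except on a middle of bounded length, and there is no single contiguous block $\c^{*}$ covering all but $O(|w|)$ letters of $\a'$ which reappears in $\b'$ as $\c^{*}$ or $\overline{\c^{*}}$. The data genuinely shared by $\a'$ and $\b'$ consists of \emph{two} blocks (the prefix $\c$ and the suffix $\d$, shared verbatim) plus the period word (the middles are $\u^{q_{1}}\v_{1}$ and $\overline{\u}^{q_{2}}\v_{2}$ with $\v_{1}\overline{\v_{2}}=\u$), not one block up to mirroring; so your decomposition $\a'=\u_{1}\c\v_{1}$, $\b'=\u_{2}\tilde{\c}\v_{2}$ with $|\u_{i}|,|\v_{i}|=O(|w|)$ cannot be established.

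Second, even granting such a decomposition, your multiplicity count is wrong: you discard the $\Upsilon$-weights of the overhead pieces via (\ref{eq:upsilon-bounded}) and then claim they contribute only a factor $|w|^{O(1)}$, but words of length $O(|w|)$ range over roughly $(2r)^{O(|w|)}$ possibilities, exponential in $|w|$, and $|w|$ can itself be of order $\log\tau^{-1}$ (e.g.\ $q=2$ with a long primitive root). That loss is a fixed power of $\tau^{-1}$, which destroys the bound $\tau^{-\delta-\epsilon}$ for small $\epsilon$. The paper's proof is organized precisely to avoid this: each pair is parameterized by $(\c,\u,\d)$ together with the lengths $(L,M_{1},M_{2},R,q)$ (then $\v_{1},\v_{2}$ are determined, being prefixes of $\u^{q}$), the full weight $\Upsilon_{\c}^{\delta}\Upsilon_{\u^{q}}^{\delta/2}\Upsilon_{\d}^{\delta}$ is retained on every parameter of unbounded length, and the power structure is used through $\Upsilon_{\u^{q}}^{\delta/2}\lesssim\Upsilon_{\u^{\tilde{q}}}^{\delta}$ with $q=2\tilde{q}+r$ and the injectivity of $\u\mapsto\u^{\tilde{q}}$, so that every such parameter is summed with exponent at least $\delta$, where the pressure vanishes; only the bounded list of length parameters is counted without weights, giving the harmless $(\log\tau^{-1})^{5}$ factor. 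You need this mechanism (weights of exponent $\geq\delta$ on all unbounded pieces, with $q\geq2$ supplying the boost on the periodic part), not the uniform bound on short pieces, to close the argument.
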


In the remainder of this $\S$\ref{subsec:Estimating-sigma2} we prove
Proposition \ref{prop:key-prop}. 

We decompose $\Pairs(\tau)$ as follows. We introduce integer parameters
$L,R\geq0$, $M_{1},M_{2}\geq0$, and $q\geq2$. For such parameters,
let $\Pairs(\text{\ensuremath{\tau,}}L,M_{1},M_{2},R;q)$ be the subset
of $\Pairs(\tau;q)$ consisting of those $(\a,\b)\in\Pairs(\tau;q)$
with
\begin{align*}
|\a'|=N_{1} & \eqdf L+M_{1}+R,\\
|\b'|=N_{2} & \eqdf L+M_{2}+R,
\end{align*}
\[
\a'=(a_{1},\ldots,a_{N_{1}}),\quad\b'=(b_{1},\ldots,b_{N_{2}}),
\]
\[
a_{1}=b_{1},a_{2}=b_{2},\ldots,a_{L}=b_{L},a_{L+1}\neq b_{L+1},
\]
\[
a_{N_{1}}=b_{N_{2}},a_{N_{1}-1}=b_{N_{2}-1},\ldots,a_{N_{1}-R+1}=b_{N_{2}-R+1},a_{N_{1}-R}\neq b_{N_{2}-R}.
\]
Since every element of $\Pairs(\tau)$ belongs to some $\Pairs(\text{\ensuremath{\tau,}}L,M_{1},M_{2},R;q)$,
we have
\begin{equation}
|\Pairs(\tau)|\leq\sum_{L,M_{1},M_{2},R\geq0,q\geq2}|\Pairs(\text{\ensuremath{\tau,}}L,M_{1},M_{2},R;q)|.\label{eq:splitting_up_sigma_2}
\end{equation}

We will estimate $|\Pairs(\text{\ensuremath{\tau,}}L,M_{1},M_{2},R;q)|$
in the following lemma.
\begin{lem}
\label{lem:sigma3-partitioned}There are constants $D>0$ and $\kappa\in\R$
depending only on $\Gamma$ such that\\
 $\Pairs(\text{\ensuremath{\tau,}}L,M_{1},M_{2},R;q)$ is empty unless
\begin{equation}
D^{-1}\log\tau^{-1}-\kappa\leq N_{1},\,N_{2}\leq D\log\tau^{-1}+\kappa,\label{eq:range-of-N_1-N_2}
\end{equation}
and
\begin{equation}
2\leq q\leq D\log\tau^{-1}+\kappa.\label{eq:q-bounds}
\end{equation}
Under the same assumptions as Proposition \ref{prop:key-prop}, and
assuming (\ref{eq:range-of-N_1-N_2}) holds, there is a constant $K=K(\Gamma)>0$
such that
\begin{equation}
|\Pairs(\text{\ensuremath{\tau,}}L,M_{1},M_{2},R;q)|\leq K\tau^{-\delta}.\label{eq:sigma2-bound}
\end{equation}
\end{lem}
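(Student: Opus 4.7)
I verify the emptiness criteria \eqref{eq:range-of-N_1-N_2}--\eqref{eq:q-bounds} and the counting bound \eqref{eq:sigma2-bound} in turn. The first two are immediate: since $\a, \b \in \ZZ(\tau)$ with $|\a| = N_{1} + 1$ and $|\b| = N_{2} + 1$, Lemma~\ref{lem:tau-to-word-length} gives \eqref{eq:range-of-N_1-N_2} after enlarging $\kappa$ by one. For \eqref{eq:q-bounds}, any nontrivial $q$-th power in the free group $\Gamma$ has reduced word length at least $q$, while $\gamma_{\a'}\gamma_{\b'}^{-1}$ has reduced word length at most $N_{1}+N_{2}$. Hence $q \leq N_{1}+N_{2} \leq 2D\log\tau^{-1}+2\kappa$, which becomes \eqref{eq:q-bounds} after enlarging $D, \kappa$ to absorb the factor of $2$.

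For the counting bound \eqref{eq:sigma2-bound}, I set up the structural decomposition. Write $\a' = \c\,\a_{M}\,\d$ and $\b' = \c\,\b_{M}\,\d$, where $\c \in \W_{L}$ is the common prefix, $\d \in \W_{R}$ is the common suffix, and $\a_{M} \in \W_{M_{1}}$, $\b_{M} \in \W_{M_{2}}$ are the middle segments with $(\a_{M})_{1} \neq (\b_{M})_{1}$ and $(\a_{M})_{M_{1}} \neq (\b_{M})_{M_{2}}$ forced by the maximality of prefix/suffix. Then
\[
\gamma_{\a'}\gamma_{\b'}^{-1} \;=\; \gamma_{\c}\,\gamma_{\a_{M}}\gamma_{\b_{M}}^{-1}\,\gamma_{\c}^{-1},
\]
so being a $q$-th power is conjugation-invariantly equivalent to $\gamma_{\a_{M}}\gamma_{\b_{M}}^{-1}$ being a $q$-th power. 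The two end-letter inequalities ensure both non-cancellation at the junction of $\a_{M}$ and $\overline{\b}_{M}$ and cyclic reducedness at the extremes, so $\a_{M}\overline{\b}_{M}$ is literally the reduced word $u^{q}$ for a cyclically reduced primitive $u$ of length $l = (M_{1}+M_{2})/q$; in particular $q \mid (M_{1}+M_{2})$ and the pair $(\a_{M}, \b_{M})$ is uniquely determined by $u$ together with the split index $M_{1}$.

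To get the $\tau^{-\delta}$ bound, I pass to the $\Upsilon^{\delta/2}$-weighted sum. By Lemma~\ref{lem:upsilon-to-tau},
\[
|\Pairs(\tau, L, M_{1}, M_{2}, R; q)| \;\leq\; K_{1}^{\delta}\,\tau^{-\delta}\!\!\sum_{(\a,\b) \in \Pairs(\tau, L, M_{1}, M_{2}, R; q)}\!\!\Upsilon_{\a'}^{\delta/2}\Upsilon_{\b'}^{\delta/2},
\]
and Lemmas~\ref{lem:coarse-homomorphism} and~\ref{lem:mirror-1} yield the pointwise majorization
\[
\Upsilon_{\a'}^{\delta/2}\Upsilon_{\b'}^{\delta/2} \;\lesssim\; \Upsilon_{\c}^{\delta}\,\Upsilon_{\a_{M}}^{\delta/2}\Upsilon_{\b_{M}}^{\delta/2}\,\Upsilon_{\d}^{\delta}.
\]
The sums over $\c \in \W_{L}$ and $\d \in \W_{R}$ factor out and are each uniformly bounded by Lemma~\ref{lem:Pressure-estimate} at $\sigma = \delta$, using the crucial identity $P(\delta) = 0$.

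The heart of the argument is thus bounding the middle sum
\[
\Sigma_{M} \;=\; \sum_{(\a_{M}, \b_{M}):\ \a_{M}\overline{\b}_{M} = u^{q}\text{ for some }u}\Upsilon_{\a_{M}}^{\delta/2}\Upsilon_{\b_{M}}^{\delta/2}
\]
uniformly in $(M_{1}, M_{2}, q)$, and this is where the cut-and-reglue strategy from the overview is essential. Using the periodicity $\a_{M}\overline{\b}_{M} = u^{q}$ with $q \geq 2$, one partitions $\a_{M}$ and $\b_{M}$ at an absolutely bounded number of positions, optionally reverses a cut piece by the mirror operation (at uniform cost $K_{3}^{\delta/2}$ per use via Lemma~\ref{lem:mirror-1}), and re-glues the pieces (at uniform cost $K_{2}^{\delta/2}$ per join via Lemma~\ref{lem:coarse-homomorphism}), so as to exhibit an injective correspondence from admissible pairs to pairs of identical sequences $(\v, \v)$ of length comparable to $M_{1}+M_{2}$. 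This collapses $\Sigma_{M}$ into a constant multiple of $\sum_{\v \in \W_{N}}\Upsilon_{\v}^{\delta}$, which is uniformly bounded by Lemma~\ref{lem:Pressure-estimate}. The delicate point is keeping the number of cuts absolutely bounded as $q$ ranges up to $D\log\tau^{-1}+\kappa$; it is the rigidity of the period structure of $u^{q}$ (e.g.\ for $q = 2$ and $M_{1} = M_{2} = l$ one has $\b_{M} = \overline{\a_{M}}$ directly, and the general case must be reduced to such a mirror identification in a $q$-independent fashion) that makes this possible, and carrying this out cleanly is the principal technical obstacle. Once $\Sigma_{M}$ is controlled by a $\Gamma$-dependent constant, \eqref{eq:sigma2-bound} follows.
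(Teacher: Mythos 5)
Your treatment of the emptiness criteria, the decomposition $\a'=\c\,\AA\,\d$, $\b'=\c\,\BB\,\d$, the reduction to $\AA\overline{\BB}=\u^{q}$ cyclically reduced, the passage to the $\Upsilon^{\delta/2}$-weighted sum, and the factoring out of the $\c$- and $\d$-sums via $P(\delta)=0$ all match the paper. But the proof has a genuine gap exactly at the point you yourself flag as ``the principal technical obstacle'': you never actually bound the middle sum $\Sigma_{M}$ uniformly in $(M_{1},M_{2},q)$. You assert that an ``absolutely bounded'' number of cut/mirror/reglue operations produces an injection to pairs of identical sequences, but you give no construction, and your only worked case ($q=2$, $M_{1}=M_{2}$, $\BB=\overline{\AA}$) does not indicate how to handle a general split point $M_{1}$ not aligned with the period, nor why the construction is independent of $q$. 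Since this uniform bound is precisely the content of \eqref{eq:sigma2-bound} beyond trivialities, the lemma is not proved as written.

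For comparison, the paper's resolution is short and avoids any delicate analysis of the period structure. From $\AA\overline{\BB}=\u^{q}$ one writes $\AA=\u^{q_{1}}\v_{1}$, $\BB=\overline{\u}^{q_{2}}\v_{2}$ with $\v_{1}\overline{\v_{2}}=\u$ and $q_{1}+q_{2}=q-1$; treating $\u^{q_{1}}$, $\u^{q_{2}}$ as \emph{single blocks}, Lemmas \ref{lem:coarse-homomorphism} and \ref{lem:mirror-1} are applied an absolutely bounded number of times (independent of $q$) to get $\Upsilon_{\a'}\Upsilon_{\b'}\leq K\,\Upsilon_{\c}^{2}\Upsilon_{\u^{q}}\Upsilon_{\d}^{2}$, and the pair $(\a,\b)$ is determined by $(\c,\u,\d)$ up to the two last letters (a factor $|\A|^{2}$, which you also omit). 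The middle sum then becomes $\sum_{\u\in\W_{(M_{1}+M_{2})/q}}\Upsilon_{\u^{q}}^{\delta/2}$, and the key ``half-power'' step is: with $\tilde{q}=\lfloor q/2\rfloor\geq1$, coarse multiplicativity gives $\Upsilon_{\u^{q}}^{\delta/2}\leq C\,\Upsilon_{\u^{\tilde{q}}}^{\delta}$, and since $\u\mapsto\u^{\tilde{q}}$ is injective into $\W_{\tilde{q}(M_{1}+M_{2})/q}$, the sum is at most $\sum_{\mathbf{U}\in\W_{\tilde{q}(M_{1}+M_{2})/q}}\Upsilon_{\mathbf{U}}^{\delta}\leq C\exp\bigl(\tfrac{\tilde{q}(M_{1}+M_{2})}{q}P(\delta)\bigr)=C$ by Lemma \ref{lem:Pressure-estimate} and $P(\delta)=0$. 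This is where $q\geq2$ and the exponent $\delta/2$ are used, and it is what makes the bound $q$-uniform; supplying this (or an equivalent argument) is what your proposal is missing.
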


\begin{proof}
For the first statement of the lemma, if $(\a,\b)\in\Pairs(\text{\ensuremath{\tau,}}L,M_{1},M_{2},R;q)$,
since $|\a'|=N_{1}$, $|\b'|=N_{2}$, and $\a,\b\in\ZZ(\tau)$, Lemma
\ref{lem:tau-to-word-length} implies (\ref{eq:range-of-N_1-N_2})
must hold; therefore\\
$\Pairs(\text{\ensuremath{\tau,}}L,M_{1},M_{2},R;q)$ is empty if
(\ref{eq:range-of-N_1-N_2}) does not hold. Moreover since $2\leq q\leq N_{1}+N_{2}$,
after doubling $D$ and $\kappa$, (\ref{eq:q-bounds}) must hold
also.

Now we prove (\ref{eq:sigma2-bound}) assuming (\ref{eq:range-of-N_1-N_2})
holds. For $(\a,\b)\in\Pairs(\text{\ensuremath{\tau,}}L,M_{1},M_{2},R;q)$
we have 
\[
\a'=\c\AA\d,\quad\b'=\c\BB\d
\]
where $|\c|=L$, $|\d|=R$, $|\AA|=M_{1}$ and $|\BB|=M_{2}$. Therefore
\[
\gamma_{\a'}\gamma_{\b'}^{-1}=\gamma_{\c}\gamma_{\AA}\gamma_{\d}\gamma_{\d}^{-1}\gamma_{\BB}^{-1}\gamma_{\c}^{-1}=\gamma_{\c}\gamma_{\AA}\gamma_{\BB}^{-1}\gamma_{\c}^{-1}.
\]
Since $\gamma_{\a'}\gamma_{\b'}^{-1}$ is a $q$th power, with $q$
maximal, $\gamma_{\AA}\gamma_{\BB}^{-1}$ is also a $q$th power,
with $q$ maximal, as they are conjugate in $\Gamma$. Since the first
letters of $\AA$ and $\BB$ are not the same, and the last letters
of $\AA$ and $\BB$ are not the same, we have $\AA\to\overline{\BB}\to\AA$,
in other words, the word $\AA\overline{\BB}$ is cyclically reduced.
It now follows that there is some $\mathbf{u}\in\W^{\circ}$ with
$|\u|=\frac{M_{1}+M_{2}}{q}$ and $\u\to\u$ (i.e. $\u$ is also cyclically
reduced) such that
\[
\AA\overline{\BB}=\underbrace{\u\u\ldots\u}_{q};
\]
$\AA\overline{\BB}$ is $q$ repeated copies of $\u$. Therefore
\begin{equation}
\AA=\underbrace{\u\u\ldots\u}_{q_{1}}\v_{1},\quad\BB=\underbrace{\bar{\u}\bar{\u}\ldots\bar{\u}}_{q_{2}}\v_{2}\label{eq:alpha-beta-expressions}
\end{equation}
with $\v_{1}\rs\overline{\v_{2}}$ and
\begin{align}
\v_{1}\overline{\v_{2}} & =\u,\label{eq:v1v2-relation}
\end{align}
\[
q_{1}+q_{2}=q-1.
\]
\uline{Our estimates will crucially rely on the observation that
for fixed \mbox{$L,M_{1},M_{2},R,q$}, choosing \mbox{$\c,\d,\u$}
specifies \mbox{$\a'$} and \mbox{$\b'$} and hence specifies \mbox{$\a$}
and \mbox{$\b$} except for their last letters.}

We will use the shorthand $\u^{m}\eqdf\underbrace{\u\u\ldots\u}_{m}$
for $m\in\N$. From (\ref{eq:alpha-beta-expressions}), using Lemma
\ref{lem:coarse-homomorphism} three times gives
\begin{align*}
\Upsilon_{\a'} & \leq K_{2}^{3}\Upsilon_{\c}\Upsilon_{\u^{q_{1}}}\Upsilon_{\v_{1}}\Upsilon_{\d}
\end{align*}
and using the same estimate in addition to the mirror estimate of
Lemma \ref{lem:mirror-1} gives
\begin{align*}
\Upsilon_{\b'} & \leq K_{2}^{3}\Upsilon_{\c}\Upsilon_{\overline{\u}^{q_{2}}}\Upsilon_{\overline{\v_{2}}}\Upsilon_{\d}\\
 & \leq K_{2}^{3}K_{3}\Upsilon_{\c}\Upsilon_{\u^{q_{2}}}\Upsilon_{\overline{\v_{2}}}\Upsilon_{\d}
\end{align*}
Therefore, now using Lemma \ref{lem:coarse-homomorphism} in the opposite
direction together with (\ref{eq:v1v2-relation}) we obtain
\begin{align}
\Upsilon_{\a'}\Upsilon_{\b'} & \leq K_{2}^{6}K_{3}\Upsilon_{\c}^{2}\Upsilon_{\u^{q_{1}}}\Upsilon_{\u^{q_{2}}}\Upsilon_{\v_{1}}\Upsilon_{\overline{\v_{2}}}\Upsilon_{\d}^{2}\nonumber \\
 & \leq K_{2}^{7}K_{3}\Upsilon_{\c}^{2}\Upsilon_{\u^{q_{1}}}\Upsilon_{\u^{q_{2}}}\Upsilon_{\u}\Upsilon_{\d}^{2}\nonumber \\
 & \leq K_{2}^{9}K_{3}\Upsilon_{\c}^{2}\Upsilon_{\u^{q}}\Upsilon_{\d}^{2}.\label{eq:upsilon-product-bound}
\end{align}
To exploit these estimates, we note that by Lemma \ref{lem:upsilon-to-tau},
we have
\begin{align*}
|\Pairs(\text{\ensuremath{\tau,}}L,M_{1},M_{2},R;q)| & \leq K_{1}^{\frac{\delta}{2}}\tau^{-\delta}\sum_{(\a,\b)\in\Pairs(\text{\ensuremath{\tau,}}L,M_{1},M_{2},R;q)}\Upsilon_{\a}^{\frac{\delta}{2}}\Upsilon_{\b}^{\frac{\delta}{2}}\\
 & \leq K_{1}^{\frac{\delta}{2}}\tau^{-\delta}\sum_{(\a,\b)\in\Pairs(\text{\ensuremath{\tau,}}L,M_{1},M_{2},R;q)}\Upsilon_{\a'}^{\frac{\delta}{2}}\Upsilon_{\b'}^{\frac{\delta}{2}}
\end{align*}
where the last inequality is by (\ref{eq:trivial-child-ineq}). Let
\[
\Sigma=\Sigma(\text{\ensuremath{\tau,}}L,M_{1},M_{2},R,q)\eqdf\sum_{(\a,\b)\in\Pairs(\text{\ensuremath{\tau,}}L,M_{1},M_{2},R;q)}\Upsilon_{\a'}^{\frac{\delta}{2}}\Upsilon_{\b'}^{\frac{\delta}{2}},
\]
so we have 
\begin{equation}
|\Pairs(\text{\ensuremath{\tau,}}L,M_{1},M_{2},R;q)|\leq K_{1}^{\frac{\delta}{2}}\tau^{-\delta}\Sigma.\label{eq:power-pairs-tosigma}
\end{equation}
By (\ref{eq:upsilon-product-bound})
\begin{align}
\Sigma & =\sum_{(\a,\b)\in\Pairs(\text{\ensuremath{\tau,}}L,M_{1},M_{2},R;q)}\Upsilon_{\a'}^{\frac{\delta}{2}}\Upsilon_{\b'}^{\frac{\delta}{2}}\nonumber \\
 & \leq|\A|^{2}\sum_{\c,\u,\d}(K_{2}^{9}K_{3})^{\frac{\delta}{2}}\Upsilon_{\c}^{\delta}\Upsilon_{\u^{q}}^{\frac{\delta}{2}}\Upsilon_{\d}^{\delta}\nonumber \\
 & \leq|\A|^{2}(K_{2}^{9}K_{3})^{\frac{\delta}{2}}\left(\sum_{\c\in\W_{L}}\Upsilon_{\c}^{\delta}\right)\left(\sum_{\u\in\W_{(M_{1}+M_{2})/q}}\Upsilon_{\u^{q}}^{\frac{\delta}{2}}\right)\left(\sum_{\d\in\W_{R}}\Upsilon_{\d}^{\delta}\right).\label{eq:Sigma3-partitioned-first-estimate}
\end{align}
By Lemma \ref{lem:Pressure-estimate} there is some $C=C(\Gamma)>0$
such that
\begin{equation}
\sum_{\c\in\W_{L}}\Upsilon_{\c}^{\delta}\leq C\exp(LP(\delta))=C,\quad\sum_{\d\in\W_{R}}\Upsilon_{\d}^{\delta}\leq C\exp(RP(\delta))=C.\label{eq:end-terms}
\end{equation}
To deal with $\sum_{\u\in\W_{(M_{1}+M_{2})/q}}\Upsilon_{\u^{q}}^{\frac{\delta}{2}}$,
we write
\[
q=2\tilde{q}+r
\]
where $r=1$ if $q$ is odd and $r=0$ if $q$ is even. Now using
Lemma \ref{lem:coarse-homomorphism} twice and the uniform bound for
$\Upsilon_{\u}$ from (\ref{eq:upsilon-bounded}) we obtain
\begin{align*}
\Upsilon_{\u} & \leq K_{2}^{2}\Upsilon_{\u^{\tilde{q}}}\Upsilon_{\u^{\tilde{q}}}\Upsilon_{\u^{r}}\leq cK_{2}^{2}\Upsilon_{\u^{\tilde{q}}}^{2}.
\end{align*}
Therefore
\begin{align}
\sum_{\u\in\W_{(M_{1}+M_{2})/q}}\Upsilon_{\u^{q}}^{\frac{\delta}{2}} & \leq(cK_{2}^{2})^{\frac{\delta}{2}}\sum_{\u\in\W_{(M_{1}+M_{2})/q}}\Upsilon_{\u^{\tilde{q}}}^{\delta}\nonumber \\
 & \leq(cK_{2}^{2})^{\frac{\delta}{2}}\sum_{\mathbf{U}\in\W_{\tilde{q}(M_{1}+M_{2})/q}}\Upsilon_{\mathbf{U}}^{\delta}\nonumber \\
 & \leq C(cK_{2}^{2})^{\frac{\delta}{2}}\exp\left(\frac{\tilde{q}(M_{1}+M_{2})}{q}P(\delta)\right)\nonumber \\
 & =C(cK_{2}^{2})^{\frac{\delta}{2}}.\label{eq:middle-term}
\end{align}
where the final inequality is by Lemma \ref{lem:Pressure-estimate}
and $C=C(\delta)$ is the constant provided there. Therefore in total,
inputting our bounds (\ref{eq:end-terms}) and (\ref{eq:middle-term})
into (\ref{eq:Sigma3-partitioned-first-estimate}) we get
\[
\Sigma\leq\tilde{K}
\]
 for $\tilde{K}=\tilde{K}(\Gamma)>0$. Hence by (\ref{eq:power-pairs-tosigma})
\[
|\Pairs(\text{\ensuremath{\tau,}}L,M_{1},M_{2},R;q)|\leq K\tau^{-\delta}
\]
for some $K=K(\Gamma)>0$.
\end{proof}
Now we can prove Proposition \ref{prop:key-prop}.
\begin{proof}[Proof of Proposition \ref{prop:key-prop}]
Combining (\ref{eq:splitting_up_sigma_2}) with Lemma \ref{lem:sigma3-partitioned}
we obtain for constants $D,\kappa>0$
\begin{align*}
|\Pairs(\tau)| & \leq\sum_{0\leq L,M_{1},M_{2},R,q\leq D\log\tau^{-1}+\kappa}K\tau^{-\delta}\\
 & \leq K\left(D\log\tau^{-1}+\kappa\right)^{5}\tau^{-\delta}\\
 & \leq\tau^{-\delta-\epsilon}
\end{align*}
for any $\epsilon>0$ and $\tau$ sufficiently small.
\end{proof}

\subsection{Proof of Proposition \ref{prop:main-prob-estimate}}

Suppose we are given parameters $Q,H_{1},\sigma_{1}$ as in Proposition
\ref{prop:main-prob-estimate}. We assume $\sigma_{1}>\frac{3\delta}{4}$,
$\sigma\in[\sigma_{1},Q]$, and $|t|\leq H_{1}$. Let 
\begin{align*}
\epsilon & \eqdf\min\left(\frac{\delta}{4},\frac{1}{4}\left(\frac{4\sigma_{1}}{\delta}-3\right)\right)>0.
\end{align*}
Let $\alpha=\frac{2}{\delta}$ and let $\tau=n^{-\alpha}=n^{-\frac{2}{\delta}}$.
Let $n_{0}$ be such that for $n\geq n_{0}$, $\tau\leq\tau_{2}$
where $\tau_{2}$ is the one provided by Proposition \ref{prop:key-prop}
for the current $\epsilon$, and $n_{0}$ is at least the one provided
by Lemma \ref{lem:splitting-into-three-sums} for the current $\epsilon$
and $\alpha$.

Combining Proposition \ref{prop:key-prop} and Lemma \ref{lem:splitting-into-three-sums}
gives for $n\geq n_{0}$
\begin{align*}
\E_{n}\|\L_{\tau,s,\rho_{n}^{0}}\|_{\HS}^{2} & \leq C\tau^{2\sigma}\left(n\tau^{-\delta}+n^{\epsilon}\tau^{-\delta-\epsilon}+\frac{1}{n^{1-\epsilon}}\tau^{-2\delta}\right)\\
 & \leq2C\left(n^{1-(2\sigma-\delta)\alpha}+n^{-1+\epsilon-2(\sigma-\delta)\alpha}\right)\\
 & \leq4Cn^{\left(3-\frac{4\sigma}{\delta}\right)+\epsilon}\\
 & \leq4Cn^{\left(3-\frac{4\sigma_{1}}{\delta}\right)+\epsilon}\leq4Cn^{-3\epsilon}\leq n^{-\epsilon}
\end{align*}
after possibly increasing $n_{0}$. This completes the proof of Proposition
\ref{prop:main-prob-estimate}. $\square$

\section{Proof of Theorems \ref{thm:L2-gap} and \ref{thm:rectangle-gap}}

As explained in the Introduction, Theorem \ref{thm:rectangle-gap}
implies Theorem \ref{thm:L2-gap}, so we will prove Theorem \ref{thm:rectangle-gap}.
A direct proof of Theorem \ref{thm:L2-gap} would use most of the
same ideas and not be significantly shorter.

Let $n\in\N$, $\phi_{n}$ be a random homomorphism $\phi_{n}:\Gamma\to S_{n}$,
and $(\rho_{n}^{0},V_{n}^{0})$ be the random representation described
in $\S$\ref{subsec:The-representations-appearing}. Let $X_{n}$
be the random convex co-compact hyperbolic surface described in the
Introduction. 

Let $\sigma_{0}\in(\frac{3}{4}\delta,\delta)$ and $H$ be the number
given in the assumptions of Theorem \ref{thm:rectangle-gap}. Let
$\tau_{1}$ and $B>2\delta$ be the constants provided by Proposition
\ref{prop:pointiwse-bound} and choose $b\geq B$ such that the open
disc $D_{b-\frac{3}{4}\delta}(b)$ contains $\Rect(\sigma_{0},H)$.
We let $\tau=n^{-\frac{2}{\delta}}$.

Since as $n$ varies in $\N$ and $\phi_{n}$ runs over all homomorphisms
from $\Gamma\to S_{n}$, the countable collection of holomorphic functions
$\zeta_{\tau,\rho_{n}^{0}}$ have amongst them all, a countable number
of zeros in the closed disc $\overline{D_{b-\frac{3}{4}\delta}(b)}$,
it is possible to find a $\sigma_{1}\in(\frac{3}{4}\delta,\sigma_{0})$
such that
\begin{itemize}
\item no $\zeta_{\tau,\rho_{n}^{0}}$ has a zero $s$ with $|s-b|=b-\sigma_{1},$
and
\item the open disc $D_{b-\sigma_{1}}(b)$ contains the closed rectangle
$\Rect(\sigma_{0},H)$
\end{itemize}
\begin{figure}
\includegraphics{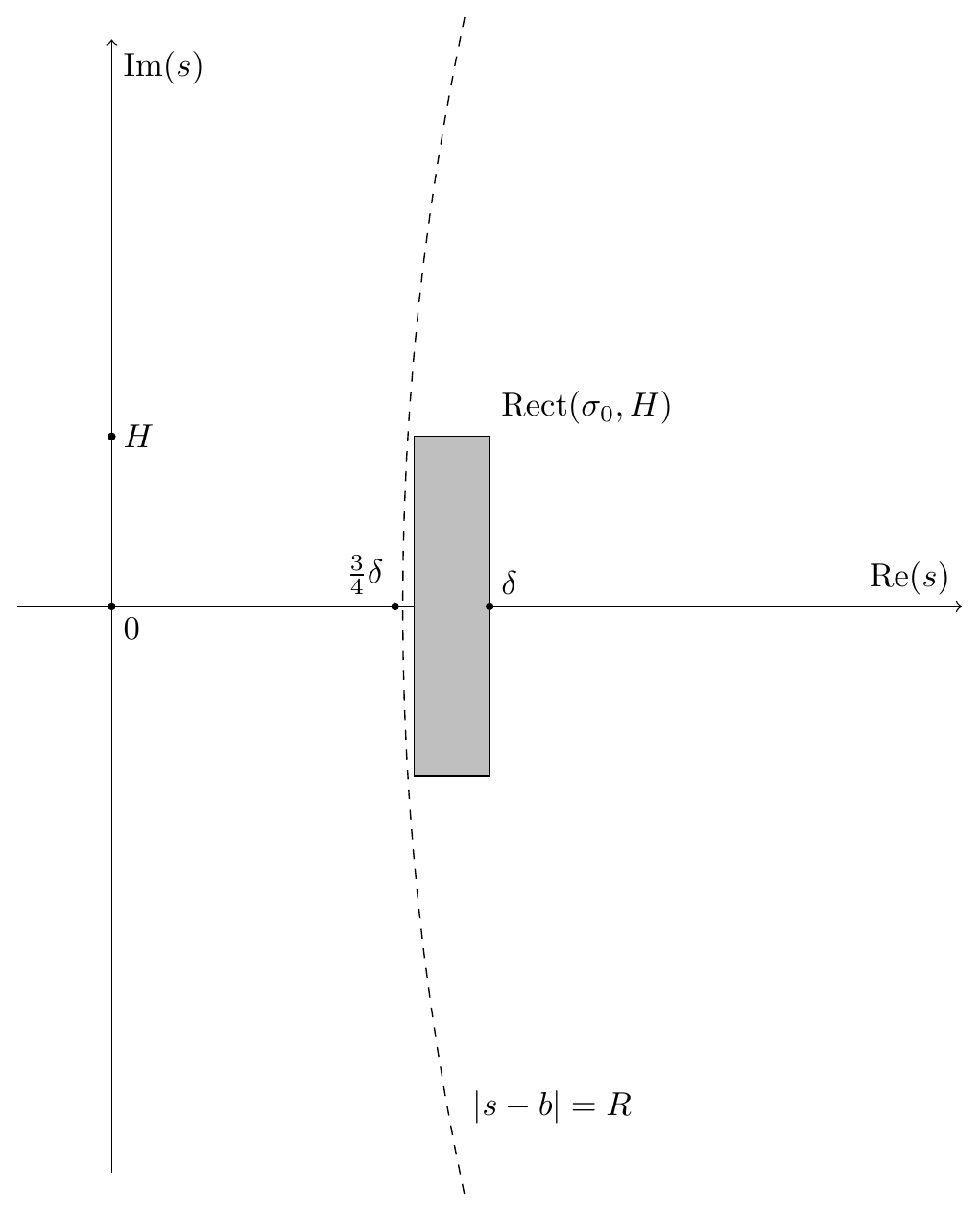}

\caption{Illustration of the contour used in Jensen's formula\label{fig:jensen}.}

\end{figure}
We pick such a $\sigma_{1}.$ Now we let
\[
R\eqdf b-\sigma_{1},\quad R'\eqdf\sup_{s\in\Rect(\sigma_{0},h)}|b-s|<R.
\]
We will shortly apply Proposition \ref{prop:main-prob-estimate} with
$Q=b+R$, $\sigma_{1}$ as is it is in the current context, and $H_{1}=R$.
Let $\epsilon$ and $n_{0}$ be the positive constants provided by
these inputs to Proposition \ref{prop:main-prob-estimate}. We pick
$n_{1}\geq n_{0}$ such that for $n\geq n_{1}$, $\tau\leq\tau_{1}$.
This sets up all the constants for the proof.

If for $\sigma>0$, $\sigma$ is an resonance for $X_{n}$, and is
either not a resonance of $X$ or a resonance of $X$ with a lower
multiplicity, then by Theorem \ref{thm:Patterson-Perry} combined
with Corollary \ref{cor:new-zeros-zeros-of-zeta}, $\zeta_{\tau,\rho_{n}^{0}}(\sigma)=0$.
\emph{Therefore it suffices to show that a.a.s. there are no zeros
of $\zeta_{\tau,\rho_{n}^{0}}$ in $\Rect(\sigma_{0},H)$. }

Let $\mathcal{N}(\phi_{n})$ be the number of zeros of $\zeta_{\tau,\rho_{n}^{0}}$
in $\Rect(\sigma_{0},H)$. Note that $\Rect(\sigma_{0},H)\subset\overline{D_{R'}(b)}$.
By Jensen's formula \cite[Thm. A.2]{Borthwick} applied to the translate
of $\zeta_{\tau,\rho_{n}^{0}}$ by $b$ we have 
\begin{equation}
\sum_{\substack{z\in D_{R}(b)\\
\zeta_{\tau,\rho_{n}^{0}}(z)=0
}
}^{*}\log\left(\frac{R}{|z-b|}\right)=\mathcal{M}(\phi_{n})\eqdf\frac{1}{2\pi}\int_{0}^{2\pi}\log|\zeta_{\tau,\rho_{n}^{0}}(b+Re^{i\theta})|d\theta-\log|\zeta_{\tau,\rho_{n}^{0}}(b)|.\label{eq:jensen's formula}
\end{equation}
The star on the sum means zeros are repeated according to their multiplicity.
Note that $b\geq B$ so Proposition \ref{prop:pointiwse-bound} ensures
$\zeta_{\tau,\rho_{n}^{0}}(b)\neq0$, and the choice of $\sigma_{1}$
ensures $\zeta_{\tau,\rho_{n}^{0}}(b+Re^{i\theta})$ is never zero.
These conditions were needed for Jensen's formula. Now (\ref{eq:jensen's formula})
implies
\begin{align}
\mathcal{N}(\phi_{n}) & \leq\log\left(\frac{R}{R'}\right)^{-1}\mathcal{M}(\phi_{n}).\label{eq:NtoM}
\end{align}
Next we majorize $\mathcal{M}(\phi_{n})$. By Weyl's inequality (cf.
\cite[(A36)]{Borthwick}) we have for any $s\in\C$
\begin{equation}
\log|\zeta_{\tau,\rho_{n}^{0}}(s)|=\log|\det(1-\L_{\tau,s,\rho_{n}^{0}}^{2})|\leq\|\L_{\tau,s,\rho_{n}^{0}}^{2}\|_{1}\leq\|\L_{\tau,s,\rho_{n}^{0}}\|_{\HS}^{2},\label{eq:first-est}
\end{equation}
where $\|\bullet\|_{1}$ and $\|\bullet\|_{\HS}$ stand for the trace
and Hilbert-Schmidt norms, respectively. This was the reason for the
square in the definition of $\zeta_{\tau,\rho_{n}^{0}}(s)$. Also,
by Proposition \ref{prop:pointiwse-bound} we have
\begin{equation}
-\log|\zeta_{\tau,\rho_{n}^{0}}(b)|\leq(n-1)\tau\leq n^{1-\frac{2}{\delta}}\leq n^{-1}\label{eq:second-est}
\end{equation}
since $\delta\in(0,1)$. Using (\ref{eq:first-est}) and (\ref{eq:second-est})
gives
\begin{equation}
\mathcal{M}(\phi_{n})\leq\mathcal{M^{*}}(\phi_{n})\eqdf\frac{1}{2\pi}\int_{0}^{2\pi}\|\L_{\tau,b+Re^{i\theta},\rho_{n}^{0}}\|_{\HS}^{2}d\theta+n^{-1}.\label{eq:MtoMstar}
\end{equation}
Combining (\ref{eq:NtoM}) and (\ref{eq:MtoMstar}) and taking expectations
gives
\begin{align*}
\E_{n}[\mathcal{N}(\phi_{n})] & \leq\log\left(\frac{R}{R'}\right)^{-1}\E_{n}[\mathcal{M^{*}}(\phi_{n})]\\
 & =\log\left(\frac{R}{R'}\right)^{-1}\frac{1}{2\pi}\int_{0}^{2\pi}\E_{n}[\|\L_{\tau,b+Re^{i\theta},\rho_{n}^{0}}\|_{\HS}^{2}]d\theta+n^{-1}.
\end{align*}
By Proposition \ref{prop:main-prob-estimate} we have $\E_{n}[\|\L_{\tau,b+Re^{i\theta},\rho_{n}^{0}}\|_{\HS}^{2}]\leq n^{-\epsilon}$
for all $\theta\in[0,2\pi].$ Hence 
\begin{equation}
\E_{n}[\mathcal{N}(\phi_{n})]\leq\log\left(\frac{R}{R'}\right)^{-1}\left(n^{-\epsilon}+n^{-1}\right)\label{eq:final-expectation-bound}
\end{equation}
for $n\geq n_{1}$. By Markov's inequality, the probability that $\zeta_{\tau,\rho_{n}^{0}}$
has at least one zero in $\Rect(\sigma_{0},H)$ is bounded by the
right hand side of (\ref{eq:final-expectation-bound}); since this
$\to0$ as $n\to\infty$, a.a.s. $\zeta_{\tau,\rho_{n}^{0}}$ has
no zeros in $\Rect(\sigma_{0},H)$. Hence by the previous arguments,
a.a.s.
\[
\mathcal{R}_{X_{n}}\bigcap\Rect(\sigma_{0},H)=\mathcal{R}_{X}\bigcap\Rect(\sigma_{0},H)
\]
and the multiplicities on both sides are the same. $\square$

\section{Proof of Theorem \ref{thm:high-frequency} about high frequency resonances\label{sec:Unitary-representations-and}}

This part is largely independent from the previous sections. Although
we use the technique of induced representations to keep track of resonances
in covers, we prove a spectral estimate on transfer operators twisted
by any unitary representation which implies Theorem \ref{thm:high-frequency}
via induced representations. We will prove the following completely
general fact. Let $\rho:\Gamma\rightarrow\mathcal{U}(V)$ be a unitary
representation of $\Gamma$ on a complex Hilbert\footnote{We do not assume that it is finite dimensional here.}
space $V$. Here $\mathcal{U}(V)$ is the set of unitary operators
on $V$. Recall that $I=\cup_{j=1}^{2r}I_{j}$. Let $C^{1}(I,V)$
denote the Banach space of $V$-valued functions, $C^{1}$ on $\overline{I}$,
endowed with the norm ($t\neq0$) 
\[
\Vert f\Vert_{(t),V}:=\Vert f\Vert_{\infty,V}+\frac{1}{\vert t\vert}\Vert f'\Vert_{\infty,V},
\]
where as usual 
\[
\Vert f\Vert_{\infty,V}=\sup_{x\in I}\Vert f(x)\Vert_{V},
\]
where $\Vert.\Vert_{V}$ is the Hilbert space norm on $V$. We recall
that the action of the ``basic'' transfer operator $\L_{s,\rho}$,
now on the function space $C^{1}(I,V)$, is given by 
\[
\L_{s,\rho}(F)(x)\eqdf\sum_{j\rightarrow i}(\gamma_{j}')^{s}(x)\rho(\gamma_{j}^{-1})F(\gamma_{j}x),\ \mathrm{if}\ x\in I_{i}.
\]
We will use the notation $\W_{N}^{j}\eqdf\left\{ {\bf a}\in\W_{N}\ :\ {\bf a}\rightarrow j\right\} $.
Given the previously defined notations and $F\in C^{1}(I,V)$, we
have for all $x\in I_{j}$ and $N\in\N$, 
\[
\L_{s,\rho}^{N}(F)(x)=\sum_{{\bf a}\in\mathscr{\W}_{N}^{j}}(\gamma_{{\bf a}}'(x))^{s}\rho(\gamma_{{\bf a}}^{-1})F(\gamma_{{\bf a}}(x)).
\]
We mention here that we could also alternatively use the ``refined''
transfer operator $\L_{\tau,s,\rho}$ here in place of $\L_{s,\rho}^{N}$,
but it wouldn't change the final result, nor it would make the size
of the gap explicit. We will need in this section some standard distortion
estimates. Some of them (bounded distortion) have already been used
in previous sections, but we recall them for the convenience of the
reader.
\begin{itemize}
\item \textit{(Uniform hyperbolicity)}. There exists $C>0$ and $0<\overline{\theta}<\theta<1$
such that for all $N$ and all $j$ such that ${\bf a}\in\mathscr{\W}_{N}^{j}$,
then for all $x\in I_{j}$ we have 
\[
C^{-1}\overline{\theta}^{N}\leq\vert\gamma'_{{\bf a}}(x)\vert\leq C\theta^{N}.
\]
\item \textit{(Bounded distortion).} There exists $M_{1}>0$ such that for
all $N,j$ and all ${\bf a}\in\mathscr{\W}_{N}^{j}$, 
\[
\sup_{I_{j}}\left\vert \frac{\gamma''_{{\bf a}}}{\gamma'_{{\bf a}}}\right\vert \leq M_{1}.
\]
\item \textit{(Bounded distortion for the third derivatives)}. There exists
$Q>0$ such that for all $n,j$ and all ${\bf a}\in\mathscr{\W}_{N}^{j}$,
\[
\sup_{I_{j}}\left\vert \frac{\gamma'''_{{\bf a}}}{\gamma'_{{\bf a}}}\right\vert \leq Q.
\]
\end{itemize}
Notice that the ``bounded distortion for the third derivatives''
follows directly from differentiating two times $\log(\gamma'_{{\bf a}})$,
and using bounded distortion and uniform hyperbolicity several times,
see for example \cite[\S 3]{Baladi} for a previous occurrence of
this condition in the literature. We now state the Ruelle-Perron-Frobenius
Theorem, which will be used below. The statement of this theorem in
the symbolic setting can be found in \cite[Thm. 2.2]{ParryPollicott}.
The version we use can be obtained via the work of Liverani \cite{Liverani}
as in \cite[Thm. 5.1]{NaudExpanding}.
\begin{thm}
\label{thm:RPF}Set $\L_{\sigma}=\L_{\sigma,\mathrm{Id}}$ where $\sigma$
is real and $\rho=\mathrm{Id}$ means the trivial one-dimensional
representation.
\end{thm}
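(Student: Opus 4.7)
The plan is to invoke the standard Liverani-style framework for quasi-compact transfer operators of piecewise expanding maps, in the form developed in \cite{NaudExpanding}. Although the precise conclusions are not spelled out in the theorem statement above, I will sketch a proof of the standard Ruelle-Perron-Frobenius package on $C^1(I,\C)$: that $e^{P(\sigma)}$ is a simple, isolated, maximal eigenvalue of $\L_\sigma$, with a strictly positive eigenfunction $h_\sigma$ and a dual eigenmeasure $\mu_\sigma$, and that the rest of the spectrum sits in a disc of radius strictly less than $e^{P(\sigma)}$.

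The first and crucial step is a Lasota-Yorke (Doeblin-Fortet) inequality. Writing
\[
\L_\sigma^N f(x) \;=\; \sum_{\mathbf{a}\in \W_N^{j}} (\gamma_\mathbf{a}'(x))^{\sigma} f(\gamma_\mathbf{a}(x)), \qquad x \in I_j,
\]
and differentiating, one term picks up the factor $\gamma_\mathbf{a}'$ from the chain rule on $f\circ \gamma_\mathbf{a}$, which is $O(\theta^{N})$ by uniform hyperbolicity; the other picks up $\sigma\,\gamma_\mathbf{a}''/\gamma_\mathbf{a}'$, which is uniformly $O(1)$ by bounded distortion. Summing over $\mathbf{a}$ using the pressure estimate $\sum_{\mathbf{a}\in \W_N^{j}} \sup_{I_j}|\gamma_\mathbf{a}'|^{\sigma} \lesssim e^{NP(\sigma)}$ from Lemma \ref{lem:Pressure-estimate} yields constants $C>0$ and $0<\theta_\star<1$ with
\[
\|(\L_\sigma^N f)'\|_\infty \;\leq\; C\,(\theta_\star e^{P(\sigma)})^N\,\|f\|_\infty \;+\; C\,\theta_\star^{N}\,\|f'\|_\infty,
\]
together with $\|\L_\sigma^N f\|_\infty \leq C e^{NP(\sigma)} \|f\|_\infty$.

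By the Hennion-Keller theorem, this inequality combined with the compact inclusion $C^1(I)\hookrightarrow C^0(I)$ (Arzel\`a-Ascoli) shows quasi-compactness of $\L_\sigma$ on $C^1(I)$: the essential spectral radius is at most $\theta_\star e^{P(\sigma)} < e^{P(\sigma)}$. The spectrum outside the disc of radius $\theta_\star e^{P(\sigma)}$ therefore consists of finitely many eigenvalues of finite multiplicity, and testing against the constant function $\mathbf{1}$ together with the pressure sum pins the spectral radius down to $e^{P(\sigma)}$.

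To upgrade this to simplicity, positivity, and a spectral gap, I would run the standard Birkhoff-Hilbert cone argument: the cone of strictly positive $C^1$ functions with a logarithmic-Lipschitz constant controlled by some $C_0=C_0(\sigma)$ is mapped strictly into itself by some sufficiently high iterate $\L_\sigma^{N_0}$. Birkhoff's contraction theorem in the projective Hilbert metric then yields uniform contraction, hence existence and simplicity of a strictly positive eigenfunction $h_\sigma$ at eigenvalue $e^{P(\sigma)}$, a unique dual eigenmeasure $\mu_\sigma$ with $\L_\sigma^* \mu_\sigma = e^{P(\sigma)} \mu_\sigma$, and exponential convergence $e^{-NP(\sigma)} \L_\sigma^N f \to h_\sigma \int f\,d\mu_\sigma$ in $C^1$. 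Topological mixing of the Bowen-Series map (a consequence of non-elementarity of $\Gamma$) rules out any further unimodular eigenvalue on the circle $|z|=e^{P(\sigma)}$. The main obstacle is the bookkeeping in the Lasota-Yorke inequality to produce the precise pressure constant $e^{P(\sigma)}$ rather than a merely abstract spectral radius; once that is secured, the rest is routine application of the Liverani/Hennion-Keller and Birkhoff-cone machinery.
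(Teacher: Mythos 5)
The paper never proves Theorem \ref{thm:RPF}: it is imported from the literature, with the symbolic version attributed to \cite[Thm. 2.2]{ParryPollicott} and the $C^1$ version to Liverani's technique as implemented in \cite[Thm. 5.1]{NaudExpanding}. Your sketch is exactly that standard route (Lasota--Yorke estimate plus Hennion--Keller quasi-compactness, Birkhoff cone contraction for simplicity, positivity and the peripheral spectrum, and aperiodicity of the Bowen--Series coding for mixing), and it correctly reconstructs the conclusions the paper actually uses: simple maximal eigenvalue $e^{P(\sigma)}$ with positive eigenfunction $h_\sigma$, essential spectral radius at most $\kappa(\sigma)e^{P(\sigma)}$, no other peripheral eigenvalues, and the projector $f\mapsto h_\sigma\int f\,d\mu_\sigma$ with $\L_\sigma^*\mu_\sigma=e^{P(\sigma)}\mu_\sigma$. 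So in outline you are doing what the cited sources do.

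One intermediate inequality is stated incorrectly, though the argument survives its correction. You claim $\|(\L_\sigma^N f)'\|_\infty\le C(\theta_\star e^{P(\sigma)})^N\|f\|_\infty+C\theta_\star^N\|f'\|_\infty$; the first coefficient cannot carry any contraction factor. Test $f\equiv 1$: the distortion term $\sum_{\a\in\W_N^j}\sigma|\gamma_{\a}'|^{\sigma}\,(\gamma_{\a}''/\gamma_{\a}')$ is generically of size $e^{NP(\sigma)}$, not $(\theta_\star e^{P(\sigma)})^N$; likewise the $\|f'\|_\infty$ coefficient should be $C(\theta e^{P(\sigma)})^N$, not $C\theta_\star^N$ independently of the pressure. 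The correct shape is precisely the paper's Lemma \ref{lem:LasotaYorke} specialized to the trivial representation and $t=0$, namely $\|(\L_\sigma^N f)'\|_\infty\le C e^{NP(\sigma)}\left(\|f\|_\infty+\theta^N\|f'\|_\infty\right)$. This does not damage your plan, because Hennion--Keller only needs the strong-norm ($C^1$) coefficient to be $O\!\left((\theta e^{P(\sigma)})^N\right)$ while the weak-norm (sup) coefficient may be as large as $e^{NP(\sigma)}$; with the corrected inequality the essential spectral radius bound $\theta e^{P(\sigma)}$ and then the cone argument go through as you describe. One further small point: pinning the spectral radius to exactly $e^{P(\sigma)}$ by testing on $\mathbf{1}$ requires a two-sided bound $\sum_{\a\in\W_N^j}\sup_{I_j}|\gamma_{\a}'|^{\sigma}\asymp e^{NP(\sigma)}$, whereas Lemma \ref{lem:Pressure-estimate} as stated only records the upper bound; the matching lower bound is standard but should be invoked explicitly.
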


\begin{enumerate}
\item \textit{The spectral radius of} $\L_{\sigma}$ \textit{on $C^{1}(I,\C)$
is $e^{P(\sigma)}$ which is a simple eigenvalue associated to a strictly
positive eigenfunction $h_{\sigma}>0$ in $C^{1}(I,\C)$.}
\item \textit{The operator $\L_{\sigma}$ on $C^{1}(I,\C)$ is quasi-compact
with essential spectral radius smaller than $\kappa(\sigma)e^{P(\sigma)}$
for some $\kappa(\sigma)<1$.}
\item \textit{There are no other eigenvalues on $\vert z\vert=e^{P(\sigma)}$.
Moreover, the spectral projector $\mathbb{P}_{\sigma}$ on $\{e^{P(\sigma)}\}$
is given by 
\[
\mathbb{P}_{\sigma}(f)=h_{\sigma}\int_{\Lambda(\Gamma)}fd\mu_{\sigma},
\]
where $\mu_{\sigma}$ is the unique probability measure on $\Lambda$
that satisfies $\L_{\sigma}^{*}(\mu_{\sigma})=e^{P(\sigma)}\mu_{\sigma}$,
and the eigenfunction $h_{\sigma}$ is normalized so that}
\[
\int h_{\sigma}d\mu_{\sigma}=1.
\]
\end{enumerate}
We continue with a basic a priori estimate.
\begin{lem}
\label{lem:LasotaYorke}Fix some $\sigma_{0}<\delta$, then there
exists $C_{0}>0,\ \rho<1$ such that for all $N$, all unitary representations
$(\rho,V)$ and all $s=\sigma+it$ with $\sigma\geq\sigma_{0}$, we
have 
\[
\Vert\left(\L_{s,\rho}^{N}(f)\right)'\Vert_{\infty,V}\leq C_{0}e^{NP(\sigma_{0})}\left\{ (1+\vert t\vert)\Vert f\Vert_{\infty,V}+\theta^{N}\Vert f'\Vert_{\infty,V}\right\} .
\]
\end{lem}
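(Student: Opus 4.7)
This is a standard Lasota--Yorke differentiation estimate. The plan is to write out $(\L_{s,\rho}^{N}f)'$ explicitly via the product rule, split it into two contributions, and bound each using the three distortion estimates recalled just before the lemma together with the pressure estimate of Lemma \ref{lem:Pressure-estimate}.

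First, for $x\in I_{j}$, differentiating
\[
\L_{s,\rho}^{N}(f)(x)=\sum_{\a\in\W_{N}^{j}}(\gamma_{\a}'(x))^{s}\rho(\gamma_{\a}^{-1})f(\gamma_{\a}(x))
\]
yields $(\L_{s,\rho}^{N}f)'(x)=A(x)+B(x)$, where $A$ collects the terms in which the derivative hits $(\gamma_{\a}')^{s}$ and produces a factor $s\,\gamma_{\a}''/\gamma_{\a}'$, while $B$ collects the terms in which it hits $f\circ\gamma_{\a}$ and produces an extra factor $\gamma_{\a}'$. Because $\gamma_{\a}'>0$ on the real intervals $I_{j}$, we have $|(\gamma_{\a}'(x))^{s}|=|\gamma_{\a}'(x)|^{\sigma}$ with no $|t|$--dependent prefactor. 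Unitarity of $\rho$, bounded distortion $|\gamma_{\a}''/\gamma_{\a}'|\leq M_{1}$, and uniform hyperbolicity $|\gamma_{\a}'(x)|\leq C\theta^{N}$ then give
\[
\|A\|_{\infty,V}\leq M_{1}|s|\sum_{\a\in\W_{N}^{j}}|\gamma_{\a}'(x)|^{\sigma}\cdot\|f\|_{\infty,V},\qquad \|B\|_{\infty,V}\leq C\theta^{N}\sum_{\a\in\W_{N}^{j}}|\gamma_{\a}'(x)|^{\sigma}\cdot\|f'\|_{\infty,V}.
\]
For the tail sums, since $\sigma\mapsto P(\sigma)$ is decreasing and $|\gamma_{\a}'|\leq 1$ once $N$ is large, $|\gamma_{\a}'|^{\sigma}\leq|\gamma_{\a}'|^{\sigma_{0}}$, so Lemma \ref{lem:Pressure-estimate} yields $\sum_{\a\in\W_{N}^{j}}|\gamma_{\a}'|^{\sigma}\leq C'e^{NP(\sigma_{0})}$ uniformly in $\sigma\geq\sigma_{0}$. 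Plugged into $B$, this already gives the contribution $C''e^{NP(\sigma_{0})}\theta^{N}\|f'\|_{\infty,V}$ that matches the second term on the right of the lemma.

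The main obstacle is the factor $|s|$ in the bound for $A$: the conclusion demands $(1+|t|)$, yet $|s|$ may be as large as $\sigma+|t|$ with $\sigma$ unbounded. For $\sigma$ in any bounded range this is harmless ($|s|\lesssim 1+|t|$), but for large $\sigma$ I will exploit extra decay from $|\gamma_{\a}'|^{\sigma}$ by writing $|\gamma_{\a}'|^{\sigma}=|\gamma_{\a}'|^{\sigma-\sigma_{0}}|\gamma_{\a}'|^{\sigma_{0}}$ and invoking uniform hyperbolicity on the first factor, so that
\[
|s|\,|\gamma_{\a}'|^{\sigma-\sigma_{0}}\leq(\sigma+|t|)(C\theta^{N})^{\sigma-\sigma_{0}}.
\]
Choose $N_{0}$ large enough that $C\theta^{N}<1$ for $N\geq N_{0}$. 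Then $\sigma\mapsto\sigma(C\theta^{N})^{\sigma-\sigma_{0}}$ is uniformly bounded on $[\sigma_{0},\infty)$ (since $y\mapsto(y+\sigma_{0})x^{y}$ has a finite maximum for $0<x<1$), while $|t|(C\theta^{N})^{\sigma-\sigma_{0}}\leq|t|$. Multiplying by the pressure bound applied at $\sigma_{0}$ gives $\|A\|_{\infty,V}\leq C'''e^{NP(\sigma_{0})}(1+|t|)\|f\|_{\infty,V}$. The finitely many small $N<N_{0}$ are absorbed by enlarging $C_{0}$. Summing the bounds for $A$ and $B$ yields the stated inequality.
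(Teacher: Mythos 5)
Your core argument is correct and is essentially the paper's own (very terse) proof: differentiate the iterated sum, observe that the representation factor is locally constant so only $(\gamma_{\a}')^{s}$ and $f\circ\gamma_{\a}$ get differentiated, bound the first contribution by bounded distortion and the second by uniform contraction, sum over $\W_{N}^{j}$ with the pressure estimate of Lemma \ref{lem:Pressure-estimate}, and use unitarity of $\rho$ for uniformity in $(\rho,V)$. Your observation that on the real intervals $|(\gamma_{\a}'(x))^{s}|=|\gamma_{\a}'(x)|^{\sigma}$ with no $t$-dependent prefactor is exactly what is needed and is implicit in the paper.

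The one step I would not accept as written is your disposal of the large-$\sigma$ regime for the finitely many $N<N_{0}$ by ``enlarging $C_{0}$''. For small $N$ the constant $C$ in the uniform contraction estimate (\ref{eq:contraction}) may exceed $1$ (the Bowen--Series map is only \emph{eventually} expanding), so some branch can satisfy $\sup_{I_{j}}\gamma_{\a}'>1$; then $\sigma\,(\gamma_{\a}')^{\sigma}$ is unbounded as $\sigma\to\infty$, no fixed $C_{0}$ absorbs it, and in fact for such Schottky configurations the displayed inequality simply fails at those small $N$ when $\sigma$ is allowed to be arbitrarily large (take $t=0$, $f$ a constant vector). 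So your large-$\sigma$ device for $N\geq N_{0}$ is fine, but the edge case it leaves over cannot be patched. The resolution is that the lemma is meant, and is only used (in the proof of Theorem \ref{thm:high-frequency}, where $\sigma\in[\sigma_{0},\delta]$), with $\sigma$ confined to a bounded window $[\sigma_{0},Q]$; there $|s|\leq Q+|t|\lesssim 1+|t|$ and Lemma \ref{lem:Pressure-estimate} applies directly on $[\sigma_{0},Q]$, so no large-$\sigma$ argument is needed at all. With that restriction stated (or with the blanket hypothesis $\sigma\in[\sigma_{0},Q]$), your proof is complete and matches the paper's.
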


\begin{proof}
Differentiate the formula for $\L_{s,\rho}^{n}(f)$: since the representation
factor is locally constant, we don't need to differentiate it. Use
the bounded distortion property plus the uniform contraction, combined
with the pressure estimate in Lemma \ref{lem:Pressure-estimate}.
Uniformity with respect to $(\rho,V)$ follows from triangle inequality
plus the fact that for all $\gamma\in\Gamma$, we have $\Vert\rho(\gamma)\Vert_{V}=1$.
\end{proof}
\noindent The main fact of this section is the following. It is essentially
a vector-valued version of a result stated in \cite{JNS}. This type
of estimate is called a \textit{Dolgopyat estimate }by reference to
Dolgopyat's work on Anosov flows \cite{Dolg} where these type of
bounds appeared for the first time.
\begin{prop}
\label{Decay1} There exist $\varepsilon>0$, $T_{0}>0$ and $C_{1},\beta>0$
such that for all $N=N(t)=[C_{1}\log\vert t\vert]$ with $s=\sigma+it$
satisfying $\vert\sigma-\delta\vert\leq\varepsilon$ and $\vert t\vert\geq T_{0}$,
we have 
\[
\int_{\Lambda(\Gamma)}\Vert\L_{s,\rho}^{N}(f)\Vert_{V}^{2}d\mu_{\delta}\leq\frac{\Vert f\Vert_{(t),V}^{2}}{\vert t\vert^{\beta}}.
\]
All the constants here are uniform with respect to $\rho,V$.
\end{prop}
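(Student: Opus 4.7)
The approach is to expand $\int_{\Lambda}\|\L_{s,\rho}^{N}f\|_{V}^{2}\,d\mu_{\delta}$ as a double sum over pairs of inverse branches of $T^{N}$, and to control the diagonal and off-diagonal contributions respectively by Theorem \ref{thm:RPF} applied to the scalar operator $\L_{2\sigma}$, and by the Bourgain--Dyatlov Fourier decay of the Patterson--Sullivan measure $\mu_{\delta}$ from \cite{BDFourier}. The vector-valued nature of the problem will reduce uniformly in $(\rho,V)$ to a family of scalar oscillatory integral estimates, because unitarity is the only structural property of $\rho$ ever invoked.

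Concretely, expanding $\|\L_{s,\rho}^{N}f(x)\|_{V}^{2}=\langle\L_{s,\rho}^{N}f(x),\L_{s,\rho}^{N}f(x)\rangle_{V}$ gives
\[
\int_{\Lambda}\|\L_{s,\rho}^{N}f\|_{V}^{2}\,d\mu_{\delta}=\sum_{j}\sum_{\a,\b\in\W_{N}^{j}}\int_{I_{j}}(\gamma_{\a}'(x))^{s}\overline{(\gamma_{\b}'(x))^{s}}\,c_{\a,\b}(x)\,d\mu_{\delta}(x),
\]
with $c_{\a,\b}(x):=\langle\rho(\gamma_{\a}^{-1})f(\gamma_{\a}x),\rho(\gamma_{\b}^{-1})f(\gamma_{\b}x)\rangle_{V}$. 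Unitarity of $\rho$ gives $|c_{\a,\b}|\le\|f\|_{\infty,V}^{2}$ pointwise, and the chain rule combined with bounded distortion and uniform hyperbolicity yields $\|c_{\a,\b}'\|_{\infty}\le C\theta^{N}|t|\,\|f\|_{(t),V}^{2}$. The diagonal $\a=\b$ collapses to $\int\L_{2\sigma}^{N}(\|f\|_{V}^{2})\,d\mu_{\delta}$, which by Theorem \ref{thm:RPF} is bounded by $Ce^{NP(2\sigma)}\|f\|_{\infty,V}^{2}$; since $P$ is strictly decreasing with $P(\delta)=0$, we have $P(2\sigma)<0$ for $|\sigma-\delta|$ small enough, so this contribution decays exponentially in $N$.

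For each off-diagonal term $\a\neq\b$, I would factor the integrand as $e^{it\phi_{\a,\b}(x)}G_{\a,\b}(x)$ with phase $\phi_{\a,\b}:=\log\gamma_{\a}'-\log\gamma_{\b}'$ (well-defined since the derivatives are positive on $I$) and amplitude $G_{\a,\b}:=|\gamma_{\a}'|^{\sigma}|\gamma_{\b}'|^{\sigma}c_{\a,\b}$. Using bounded distortion for the first and third derivatives to obtain uniform $C^{3}$-control on $\phi_{\a,\b}$, and a UNI-type lower bound on $|\phi_{\a,\b}'|$ coming from the Ping-Pong geometry of distinct inverse branches, the Bourgain--Dyatlov Fourier decay estimate for $\mu_{\delta}$ gives
\[
\left|\int_{I_{j}}e^{it\phi_{\a,\b}}G_{\a,\b}\,d\mu_{\delta}\right|\le C|t|^{-\alpha}\|G_{\a,\b}\|_{C^{k}}
\]
for some $\alpha=\alpha(\delta)>0$ and $k$ depending only on $\delta$. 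Summing and bounding $\sum_{\a,\b}|\gamma_{\a}'|^{\sigma}|\gamma_{\b}'|^{\sigma}$ by $Ce^{2NP(\sigma)}$ via Lemma \ref{lem:Pressure-estimate}, the total off-diagonal is at most $C|t|^{-\alpha}e^{KN}\|f\|_{(t),V}^{2}$ for some $K=K(\sigma)$. Taking $N=[C_{1}\log|t|]$ with $C_{1}$ small enough makes $e^{KN}\ll|t|^{\alpha/2}$, yielding the bound $|t|^{-\beta}\|f\|_{(t),V}^{2}$ for some $\beta>0$ and completing the proof together with the exponentially small diagonal estimate.

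The main obstacle is establishing the lower bound on $|\phi_{\a,\b}'|$ and the $C^{k}$-bound on $G_{\a,\b}$ with constants independent of $N$, so that the Bourgain--Dyatlov estimate applies uniformly across the exponentially many pairs $(\a,\b)$. For Schottky groups this UNI-type condition can be extracted from the fact that distinct compositions of generators have distinct logarithmic derivatives on the limit set, but making this quantitative and $N$-uniform is the technical heart of the argument. Uniformity in $(\rho,V)$ is automatic since no step uses $\rho$ beyond unitarity.
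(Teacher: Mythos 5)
Your skeleton is the same as the paper's proof (expansion of $\int_{\Lambda(\Gamma)}\Vert\L_{s,\rho}^{N}f\Vert_{V}^{2}d\mu_{\delta}$ over pairs of branches, phase $\Phi_{{\bf a},{\bf b}}=\log\gamma_{{\bf a}}'-\log\gamma_{{\bf b}}'$, pressure/Ruelle--Perron--Frobenius input for the non-oscillatory part, Bourgain--Dyatlov Fourier decay for the oscillatory part, uniformity in $(\rho,V)$ from unitarity alone), but there is a genuine gap at exactly the step you flag as the ``technical heart''. You propose to apply Theorem \ref{Decay2} to \emph{every} pair ${\bf a}\neq{\bf b}$, which requires $\inf_{I_{j}}\vert\Phi_{{\bf a},{\bf b}}'\vert\geq\vert t\vert^{-\beta_{1}}$ uniformly over all distinct pairs of length $N=[C_{1}\log\vert t\vert]$. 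No such bound holds: if ${\bf a}$ and ${\bf b}$ differ only in their first letter, say $\gamma_{{\bf a}}=\gamma_{a_{1}}\gamma_{{\bf c}}$ and $\gamma_{{\bf b}}=\gamma_{b_{1}}\gamma_{{\bf c}}$ with $\vert{\bf c}\vert=N-1$, then $\Phi_{{\bf a},{\bf b}}'(x)=\left[\frac{\gamma_{a_{1}}''}{\gamma_{a_{1}}'}-\frac{\gamma_{b_{1}}''}{\gamma_{b_{1}}'}\right](\gamma_{{\bf c}}(x))\,\gamma_{{\bf c}}'(x)$, which by bounded distortion and uniform contraction is $O(\theta^{N})=O(\vert t\vert^{-C_{1}\vert\log\theta\vert})$, far below $\vert t\vert^{-\beta_{1}}$ for the size of $C_{1}$ you need anyway. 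So the oscillatory estimate is unavailable for an exponentially rich family of off-diagonal pairs, and your diagonal term (${\bf a}={\bf b}$, handled via $\L_{2\sigma}$ and Theorem \ref{thm:RPF}) does not cover them. The paper's proof is organized around precisely this point: it splits the pair sum according to whether $\mathscr{D}({\bf a},{\bf b})=\inf_{I_{j}}\vert\Phi_{{\bf a},{\bf b}}'\vert$ is $\leq\epsilon$ or $>\epsilon$ with $\epsilon=e^{-\eta N}$, applies Theorem \ref{Decay2} only to the far pairs, and bounds the near-diagonal pairs (most of which have ${\bf a}\neq{\bf b}$) with no oscillation at all, using the quantitative UNI counting estimate of Proposition \ref{UNI1} imported from \cite{JNS}, namely $\sum_{{\bf b}\,:\,\mathscr{D}({\bf a},{\bf b})<\epsilon}\Vert\gamma_{{\bf b}}'\Vert_{\infty}^{\delta}\leq M\epsilon^{\delta}$. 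That counting statement is the missing ingredient; it cannot be obtained by waving at ``distinct compositions have distinct logarithmic derivatives''.

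A secondary, but real, inconsistency is the parameter bookkeeping. You conclude by taking $C_{1}$ \emph{small} so that $e^{KN}\ll\vert t\vert^{\alpha/2}$, yet the $C^{1}$ norm of your amplitude $G_{{\bf a},{\bf b}}$, which multiplies the Fourier-decay gain in Theorem \ref{Decay2}, carries the factor $1+\vert t\vert\theta^{N}$ (as in your own bound on $c_{{\bf a},{\bf b}}'$); for small $C_{1}$ this is a positive power of $\vert t\vert$ that swamps the small exponent $\beta_{2}$. The consistent choice is the opposite one, as in the paper: fix $C_{1}$ large enough that $\vert t\vert\theta^{N}=O(1)$, choose $\eta$ small enough that $\epsilon=e^{-\eta N}\geq\vert t\vert^{-\beta_{1}}$, and then neutralize the resulting pressure factors $e^{NP(\sigma_{0})}$ and $e^{2NP(\sigma_{0})}=\vert t\vert^{O(C_{1}P(\sigma_{0}))}$ by taking $\sigma_{0}$ (equivalently the $\varepsilon$ in the statement) close enough to $\delta$, exploiting $P(\delta)=0$ and continuity of the pressure.
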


\noindent A particular case of this estimate was proved in \cite{OW,MOW}
for the case of congruence subgroups, where 
\[
\rho:\Gamma\rightarrow\mathcal{U}\left(L^{2}(\SL_{2}(\mathbb{F}_{p}))\right),
\]
is obtained after reduction mod $p$ via the regular representation
of $\SL_{2}(\mathbb{F}_{p})$. The proof was an adaptation of the
arguments of \cite{NaudExpanding}. We will present below a shorter,
more direct version of this estimate which allows to prove this generalization
without much effort.

Let us first briefly explain why this actually implies Theorem \ref{thm:high-frequency}.
We set $\rho=\mathrm{Ind}_{\widetilde{\Gamma}}^{\Gamma}$, where $\widetilde{\Gamma}$
is an arbitrary, finite index subgroup of $\Gamma$, and $\mathrm{Ind}_{\widetilde{\Gamma}}^{\Gamma}$
is the induced representation to $\Gamma$ of the trivial representation
of $\widetilde{\Gamma}$. We work by contradiction. Assume that $Z_{\widetilde{\Gamma}}(s)=0$,
then according to the induction formula of Venkov-Zograf \cite{VZ,FP},
we have for $s=\sigma+it$, 
\[
\L_{s,\rho}(F_{s})=F_{s},
\]
for some $F_{s}\not\equiv0\in C^{1}(I,V)$. We can definitely normalize
$F_{s}$ so that $\Vert F_{s}\Vert_{(t),V}=1$. Write $N=N_{1}+N(t)$,
where $N(t)$ is given by Proposition \ref{Decay1}. Take $\sigma_{0}\leq\sigma\leq\delta$.
Using the triangle inequality for $\Vert.\Vert_{V}$ and unitarity
of $\rho$, we have (by Cauchy-Schwarz) and the pressure estimate
(Lemma \ref{lem:Pressure-estimate}), 
\[
\Vert F_{s}\Vert_{\infty,V}\leq C_{0}e^{\frac{N_{1}}{2}P(2\sigma_{0}-\delta)}\left(\L_{\delta}^{N_{1}}\left(\Vert\L_{s,\rho}^{N(t)}(F_{s})\Vert_{V}^{2}\right)\right)^{1/2}.
\]
We need to estimate the $C^{1}$-norm of $x\mapsto\Vert\L_{s,\rho}^{N(t)}(F_{s})\Vert_{V}^{2}(x)$
on $I$. Since we work with a Hilbert norm, the square of the norm
is differentiable and we can compute 
\[
\frac{d}{dx}\Vert\L_{s,\rho}^{N(t)}(F_{s})\Vert_{V}^{2}=2\Re\left(\langle(\L_{s,\rho}^{N(t)}(F_{s}))',\L_{s,\rho}^{N(t)}(F_{s})\rangle_{V}\right),
\]
and use the $V$-valued Lasota-Yorke estimate from Lemma \ref{lem:LasotaYorke}
and Cauchy-Schwarz to obtain 
\[
\Vert\Vert\L_{s,\rho}^{N(t)}(F_{s})\Vert_{V}^{2}\Vert_{C^{1}(I)}\leq Ce^{2N(t)P(\sigma_{0})}(1+\vert t\vert).
\]
Using the Ruelle-Perron-Frobenius Theorem (Theorem \ref{thm:RPF}),
and the fact that $P(\delta)=0$, we get 
\[
\Vert F_{s}\Vert_{\infty,V}^{2}\leq Ce^{N_{1}P(2\sigma_{0}-\delta)}\left(\int_{\Lambda(\Gamma)}\Vert\L_{s,\rho}^{N(t)}(F_{s})\Vert_{V}^{2}d\mu_{\delta}+\kappa_{\delta}^{N_{1}}e^{2N(t)P(\sigma_{0})}(1+\vert t\vert)\right),
\]
with $\kappa_{\delta}<1$. Assuming that $\sigma_{0}\geq\delta-\epsilon$
and $\vert t\vert\geq T_{0}$, we can apply Proposition \ref{Decay1}
and set $N_{1}=N_{1}(t)=[C_{2}\log\vert t\vert]$ to get 
\[
\Vert F_{s}\Vert_{\infty,V}^{2}\leq C\left(\vert t\vert^{C_{2}P(2\sigma_{0}-\delta)-\beta}+\vert t\vert^{-C_{2}\vert\log\kappa\vert+2C_{1}P(\sigma_{0})+1}\right).
\]
We then take $C_{2}$ large enough and fix $\sigma_{0}$ close enough
to $\delta$ so that $C_{2}P(2\sigma_{0}-\delta)-\beta<0$ and we
get 
\[
\Vert F_{s}\Vert_{\infty,V}^{2}\leq C\vert t\vert^{-\widetilde{\beta}},
\]
for some $\widetilde{\beta}>0$. The same calculation can be performed
to obtain similarly 
\[
\Vert F_{s}'\Vert_{\infty,V}^{2}\leq C\vert t\vert^{-\widetilde{\beta}+2},
\]
and we reach a contradiction for all $\vert t\vert$ large since $1=\Vert F_{s}\Vert_{(t),V}\leq C'\vert t\vert^{-\widetilde{\beta}/2}.$
Once again, all the constants are uniform with respect to $(\rho,V)$.

\bigskip{}
The proof of the key Proposition \ref{Decay1} will rest on the following
result of Bourgain-Dyatlov \cite{BDFourier}.
\begin{thm}
\label{Decay2} There exist constants $\beta_{1},\beta_{2}>0$ such
that the following holds. Given $g\in C^{1}(I)$ and $\Phi\in C^{2}(I)$,
consider the integral 
\[
\mathcal{I}(\xi)\eqdf\int_{\Lambda(\Gamma)}e^{-i\xi\Phi(x)}g(x)d\mu_{\delta}(x).
\]
If we have 
\[
\inf_{\Lambda(\Gamma)}\vert\Phi'\vert\geq\vert\xi\vert^{-\beta_{1}},
\]
and $\Vert\Phi\Vert_{C^{2}}\leq M$, then for all $\vert\xi\vert\geq1$,
we have 
\[
\vert\mathcal{I}(\xi)\vert\leq C_{M}\vert\xi\vert^{-\beta_{2}}\Vert g\Vert_{C^{1}},
\]
where $C_{M}>0$ does not depend on $\xi,g$.
\end{thm}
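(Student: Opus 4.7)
The plan is to exploit two features of the Patterson--Sullivan measure $\mu_\delta$: its self-similarity under the Schottky iterated function system (encoded by $\L_\delta^*\mu_\delta = \mu_\delta$, since $P(\delta)=0$), and a discretized non-concentration property that prevents $\mu_\delta$ from aligning into an arithmetic progression at any scale. The overall architecture follows Bourgain--Dyatlov: first, a multiscale decomposition linearizes $\Phi$ on small pieces and reduces the problem to power-Fourier decay of $\mu_\delta$ against a family of linear phases; then a discretized sum-product (or projection) theorem delivers the polynomial bound.

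First, I would fix a scale $\tau = |\xi|^{-\alpha}$ for some $\alpha \in (0,1)$ to be optimized, and iterate the invariance $\L_\delta^*\mu_\delta = \mu_\delta$ against the partition $Z(\tau)$ to obtain the self-similar identity
\[
\int F\,d\mu_\delta \;=\; \sum_{\a \in Z(\tau)} \int \gamma'_{\a'}(y)^{\delta}\, F(\gamma_{\a'}(y))\, d\mu_\delta(y).
\]
Applied to $F = e^{-i\xi\Phi}g$, this expresses $\mathcal{I}(\xi)$ as a sum over $\a \in Z(\tau)$. Each piece is supported where $|I_\a| \sim \tau$ by the definition of $Z(\tau)$, so Taylor-expanding $\Phi$ about a basepoint $x_\a \in I_\a$ replaces the oscillatory factor by a linear phase with error $O(M\tau^2|\xi|)$, which is negligible for $\alpha > 1/2$.

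After linearization, the $\a$-th summand is essentially a Fourier coefficient of $\mu_\delta$ at frequency $\eta_\a \sim \xi \cdot \gamma'_{\a'}(\cdot) \cdot \Phi'(x_\a)$, whose size is $\gtrsim |\xi|^{1-\alpha-\beta_1}$ thanks to the hypothesis $\inf|\Phi'| \geq |\xi|^{-\beta_1}$. It therefore suffices to prove a uniform power decay $|\widehat{\mu_\delta}(\eta)| \lesssim |\eta|^{-\beta_3}$ for some $\beta_3 > 0$. I would obtain this via a $TT^*$/$L^2$ argument on the outer $\a$-sum: squaring leads to a double sum over pairs $(\a,\b) \in Z(\tau)^2$ involving the push-forward of $\mu_\delta \otimes \mu_\delta$ under the ``difference'' map $(y,z) \mapsto \gamma_{\a'}(y) - \gamma_{\b'}(z)$, and the desired decay then follows from a discretized non-concentration estimate, namely Bourgain's discretized sum-product theorem, applied to this push-forward.

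The main obstacle is precisely this last step: verifying the non-concentration input of the discretized sum-product theorem uniformly across the collection of Schottky branches $\{\gamma_{\a'}\}_{\a \in Z(\tau)}$, using Ahlfors regularity of $\mu_\delta$ at dimension $\delta \in (0,1)$, and then balancing the three competing scales (linearization error, frequency lower bound $|\eta_\a|$, and sum-product gain) so as to produce a single explicit exponent $\beta_2 > 0$. The regularity of $g$ and $\Phi$ enters only through Taylor control on the linearization and an integration-by-parts step disposing of the amplitude after dyadic decomposition; both are bookkeeping once the non-concentration input is in hand.
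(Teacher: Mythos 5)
The paper does not prove this statement at all: it is quoted verbatim (in the formulation discussed in \cite{JNS}) from Bourgain--Dyatlov \cite{BDFourier}, and its proof is the main content of that paper. So the relevant question is whether your sketch would actually reconstitute that proof, and as written it would not. The first gap is in your reduction to linear phases. After applying the self-similarity $\mathcal{L}_{\delta}^{*}\mu_{\delta}=\mu_{\delta}$ at scale $\tau=|\xi|^{-\alpha}$ and Taylor-expanding $\Phi$ on a single interval $I_{\mathbf{a}}$, what you obtain is not a value of $\widehat{\mu_{\delta}}$ but a \emph{local} Fourier coefficient of the restriction $\mu_{\delta}|_{I_{\mathbf{a}}}$ at frequency $\sim\xi\Phi'(x_{\mathbf{a}})$; global power decay of $\widehat{\mu_{\delta}}$ gives no control of such localized coefficients. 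If instead you renormalize the piece by pulling back under $\gamma_{\mathbf{a}'}$ (using $\gamma_{\mathbf{a}'*}(\gamma_{\mathbf{a}'}'{}^{\delta}\,d\mu_{\delta})$), the phase becomes $\xi\Phi'(x_{\mathbf{a}})\gamma_{\mathbf{a}'}(y)$, i.e.\ a nonlinear M\"obius phase again, and you are back where you started. This is precisely why Bourgain--Dyatlov prove the oscillatory-integral statement for nonlinear phases directly rather than deducing it from decay of $\widehat{\mu_{\delta}}$ (which is instead a corollary of their theorem, via $\Phi(x)=x$).

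The second and more serious gap is the claimed proof of the decay $|\widehat{\mu_{\delta}}(\eta)|\lesssim|\eta|^{-\beta_{3}}$ itself by a single-scale $TT^{*}$ argument plus ``Bourgain's discretized sum-product theorem applied to the difference-map push-forward.'' Non-concentration/Ahlfors regularity of $\mu_{\delta}$ at a single scale can never by itself yield Fourier decay: the middle-thirds Cantor measure is Ahlfors regular of dimension $\log 2/\log 3$ and has no decay along the frequencies $3^{k}$. The sum-product mechanism only enters after iterating the refinement across $k$ geometrically separated scales, so that the derivative of a long word factors (up to bounded distortion) into a product of $k$ essentially independent factors; the phase then takes the multilinear form $\xi\,\zeta_{1}\zeta_{2}\cdots\zeta_{k}$ to which Bourgain's exponential-sum version of the sum-product theorem applies, with the non-concentration of each factor supplied by the structure of the Schottky derivatives. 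This multi-scale product structure, together with the delicate choice of $k$ and of the scale ratios, is the heart of the argument in \cite{BDFourier} and is absent from your outline; you yourself flag the ``non-concentration input and balancing of scales'' as the main obstacle, which is an acknowledgement that the core of the proof is missing. For the purposes of the present paper the correct move is the one the authors make: cite \cite{BDFourier} (and \cite{JNS} for this exact formulation) rather than attempt to reprove it.
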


For comments on this version of the Bourgain-Dyatlov decay estimate,
see \cite{JNS}. Let us just mention that $\mu_{\delta}$, up to a
smooth density, is the Patterson-Sullivan measure, see \cite{JNS}.
To be able to use this estimate, we will use the following fact from
\cite{JNS}, which is referred there as the ``uniform-non-integrability
property'' (UNI), see Proposition 4.10.
\begin{prop}
\label{UNI1} (UNI) For all ${\bf a},{\bf b}\in\mathscr{\W}_{N}^{j}$
set 
\[
\mathscr{D}({\bf a},{\bf b}):=\inf_{x\in I_{j}}\left\vert \frac{\gamma''_{{\bf a}}(x)}{\gamma'_{{\bf a}}(x)}-\frac{\gamma''_{{\bf b}}(x)}{\gamma'_{{\bf b}}(x)}\right\vert .
\]
There exist constants $M>0$ and $\eta_{0}>0$ such that for all $n$
and all $\epsilon=e^{-\eta N}$ with $0<\eta<\eta_{0}$, we have for
all ${\bf a}\in\mathscr{\mathcal{W}}_{N}^{j}$, 
\[
\sum_{{\bf b}\in\mathscr{\W}_{N}^{j},\ \mathscr{D}({\bf a},{\bf b})<\epsilon}\Vert\gamma'_{{\bf b}}\Vert_{I_{j},\infty}^{\delta}\leq M\epsilon^{\delta}.
\]
\end{prop}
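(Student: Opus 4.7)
My plan is to partition the set of $\mathbf{b} \in \mathscr{W}_N^j$ with $\mathscr{D}(\mathbf{a},\mathbf{b}) < \epsilon$ according to the length $\ell \in \{0,1,\dots,N\}$ of the longest common suffix $\mathbf{d}$ of $\mathbf{a}$ and $\mathbf{b}$. Writing $\mathbf{a} = \tilde{\mathbf{a}}\mathbf{d}$ and $\mathbf{b} = \tilde{\mathbf{b}}\mathbf{d}$ with $|\mathbf{d}| = \ell$ (and the last letters of $\tilde{\mathbf{a}}$, $\tilde{\mathbf{b}}$ differing when $\ell < N$), the chain rule applied to $\gamma_{\mathbf{a}} = \gamma_{\tilde{\mathbf{a}}} \circ \gamma_{\mathbf{d}}$ yields the identity
\[
\frac{\gamma''_{\mathbf{a}}(x)}{\gamma'_{\mathbf{a}}(x)} - \frac{\gamma''_{\mathbf{b}}(x)}{\gamma'_{\mathbf{b}}(x)} = \gamma'_{\mathbf{d}}(x)\bigl[\eta_{\tilde{\mathbf{a}}}(\gamma_{\mathbf{d}}(x)) - \eta_{\tilde{\mathbf{b}}}(\gamma_{\mathbf{d}}(x))\bigr],
\]
where $\eta_\gamma := \gamma''/\gamma'$ and the two occurrences of $\eta_{\mathbf{d}}$ cancel. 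Uniform hyperbolicity gives $|\gamma'_{\mathbf{d}}| \asymp \theta^\ell$, so the hypothesis $\mathscr{D}(\mathbf{a},\mathbf{b}) < \epsilon$ becomes the pointwise inequality $|\eta_{\tilde{\mathbf{a}}}(y) - \eta_{\tilde{\mathbf{b}}}(y)| \lesssim \epsilon\,\theta^{-\ell}$ at a suitable $y = \gamma_{\mathbf{d}}(x^*)$.

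Next, exploit the Möbius structure: for $\gamma \in \SL_2(\mathbb{R})$ with lower-row entries $(c,d)$, one has the explicit formulas $\eta_\gamma(x) = -2c/(cx+d)$ and $\sqrt{\gamma'(x)} = 1/|cx+d|$, so the pair $(\eta_\gamma(y), \gamma'(y))$ determines $(c,d)$ up to sign. Hence the Schwarzian-type constraint $|\eta_{\tilde{\mathbf{a}}}(y) - \eta_{\tilde{\mathbf{b}}}(y)| \lesssim \epsilon\,\theta^{-\ell}$ localizes the bottom row of $\gamma_{\tilde{\mathbf{b}}}$ and translates, geometrically, into the statement that $\gamma_{\tilde{\mathbf{b}}}(y)$ lies in an interval of length $\lesssim \epsilon\,\theta^{-\ell}$ around $\gamma_{\tilde{\mathbf{a}}}(y)$, uniformly across the dyadic ranges of $\|\gamma'_{\tilde{\mathbf{b}}}\|$. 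Since $\gamma_{\tilde{\mathbf{b}}}(y) \in I_{\tilde{\mathbf{b}}}$ and $\mu_\delta(I_{\tilde{\mathbf{b}}}) \asymp \|\gamma'_{\tilde{\mathbf{b}}}\|^\delta$, the Ahlfors $\delta$-regularity of the Patterson--Sullivan measure (a consequence of $P(\delta)=0$ and Lemma~\ref{lem:Pressure-estimate}) then yields
\[
\sum_{\tilde{\mathbf{b}}\,:\,\text{constraint}} \|\gamma'_{\tilde{\mathbf{b}}}\|^\delta \;\lesssim\; \min\bigl((\epsilon\,\theta^{-\ell})^\delta,\, 1\bigr).
\]

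Combining this with the coarse multiplicativity $\|\gamma'_{\mathbf{b}}\|^\delta \asymp \|\gamma'_{\tilde{\mathbf{b}}}\|^\delta\,\|\gamma'_{\mathbf{d}}\|^\delta \asymp \|\gamma'_{\tilde{\mathbf{b}}}\|^\delta\,\theta^{\delta\ell}$, the contribution at each common suffix length $\ell$ is bounded by $\epsilon^\delta$ in the regime $\epsilon\,\theta^{-\ell} < 1$, and by $\theta^{\delta\ell} < \epsilon^\delta$ otherwise. Summing over $\ell \in \{0,\dots,N\}$ produces a total bound of $O(N \epsilon^\delta)$; since $\epsilon = e^{-\eta N}$ with $\eta < \eta_0$, the extra factor $N = \eta^{-1}\log(1/\epsilon)$ is absorbed by further reducing $\eta_0$ and enlarging $M$. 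The main obstacle is the Möbius localization step: one must verify that the Schwarzian constraint genuinely pins down $\gamma_{\tilde{\mathbf{b}}}(y)$ inside a region of the claimed size, uniformly over dyadic scales of $\|\gamma'_{\tilde{\mathbf{b}}}\|$ and without accidental degeneracy of the sign of $cy+d$. This step uses the non-elementary (Zariski dense) hypothesis on $\Gamma$ together with the bounded distortion of the second and third derivatives recorded before Theorem~\ref{thm:RPF}.
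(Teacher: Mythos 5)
The paper does not actually prove Proposition \ref{UNI1}; it cites \cite[\S 4]{JNS}, where the argument is run directly on the M\"obius pole: for $\gamma\in\SL_{2}(\R)$ with bottom row $(c,d)$ one has $\gamma''(x)/\gamma'(x)=-2c/(cx+d)=2/(\gamma^{-1}(\infty)-x)$, and for every $\mathbf{c}\in\W_{N}^{j}$ the pole $\gamma_{\mathbf{c}}^{-1}(\infty)$ lies in the closure of the mirror cylinder $I_{\overline{\mathbf{c}}}$, at distance $\asymp1$ from $I_{j}$; hence $\mathscr{D}(\mathbf{a},\mathbf{b})\asymp|\gamma_{\mathbf{a}}^{-1}(\infty)-\gamma_{\mathbf{b}}^{-1}(\infty)|$, all relevant poles lie in a single window of length $O(\epsilon)$ (this is where $\eta<\eta_{0}$ enters, to ensure $\theta^{N}\lesssim\epsilon$), the intervals $I_{\overline{\mathbf{b}}}$ are pairwise disjoint with $|I_{\overline{\mathbf{b}}}|\asymp\Vert\gamma'_{\mathbf{b}}\Vert_{I_{j},\infty}$ (Lemmas \ref{lem:deriv-to-upsilon} and \ref{lem:mirror-1}), and the upper $\delta$-regularity (Gibbs property) of $\mu_{\delta}$ applied to that one window gives $M\epsilon^{\delta}$ in a single stroke, with no decomposition over suffix lengths. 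Measured against this, your proposal has a genuine gap at exactly the step you yourself flag as ``the main obstacle'': the claim that $|\eta_{\tilde{\mathbf{a}}}(y)-\eta_{\tilde{\mathbf{b}}}(y)|\lesssim\epsilon\theta^{-\ell}$ localizes the forward image $\gamma_{\tilde{\mathbf{b}}}(y)$ near $\gamma_{\tilde{\mathbf{a}}}(y)$ is not merely unverified, it is false. The constraint pins down the bottom row, i.e.\ the pole $\gamma_{\tilde{\mathbf{b}}}^{-1}(\infty)$ (equivalently a long prefix of the mirror word $\overline{\tilde{\mathbf{b}}}$), and says nothing about the image: two matrices with the same bottom row differ by left multiplication by an upper-triangular unipotent, which translates $\gamma_{\tilde{\mathbf{b}}}(y)$ arbitrarily while leaving $\eta_{\tilde{\mathbf{b}}}$ and $\gamma'_{\tilde{\mathbf{b}}}$ unchanged; concretely, two words sharing a long suffix but with different first letters have nearby poles while their images lie in different discs. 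The count must therefore be done on the mirror cylinders $I_{\overline{\tilde{\mathbf{b}}}}$, using Lemma \ref{lem:mirror-1} to convert $\Vert\gamma'_{\tilde{\mathbf{b}}}\Vert^{\delta}$ into their $\mu_{\delta}$-measure; Zariski density plays no role here, only the pole formula and the disjointness of the Schottky discs.

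Even granting a corrected localization, your quantitative bookkeeping does not close. Uniform contraction gives $C^{-1}\bar{\theta}^{\ell}\le|\gamma'_{\mathbf{d}}|\le C\theta^{\ell}$ with $\bar{\theta}<\theta$, so the window you can extract has size $\epsilon\bar{\theta}^{-\ell}$ while the suffix weight is only $\le C^{\delta}\theta^{\delta\ell}$; the product $(\epsilon\bar{\theta}^{-\ell})^{\delta}\theta^{\delta\ell}=\epsilon^{\delta}(\theta/\bar{\theta})^{\delta\ell}$ grows geometrically in $\ell$, so your two-regime estimate collapses unless you keep the actual value $\Vert\gamma'_{\mathbf{d}}\Vert$ in both the window and the weight (bounded distortion makes $\inf$ and $\sup$ comparable), which yields the per-$\ell$ bound $\min(\epsilon^{\delta},\Vert\gamma'_{\mathbf{d}}\Vert^{\delta})$. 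Summing over $\ell$ then still produces $O(\log(1/\epsilon)\cdot\epsilon^{\delta})=O(\eta N\epsilon^{\delta})$, and your final step is invalid: an unbounded factor $N$ cannot be ``absorbed by further reducing $\eta_{0}$ and enlarging $M$'', since for any fixed admissible $\eta$ the ratio of your bound to $M\epsilon^{\delta}$ is $N/M\to\infty$. (Such a logarithmic loss would in fact be harmless for the application in Proposition \ref{Decay1}, but it does not prove the stated inequality.) The $N$-loss is an artifact of bounding each $\ell$-slice by the measure of the whole $\epsilon$-window; the one-shot argument over all of $\W_{N}^{j}$ sketched above avoids both this and the $\theta$ versus $\bar{\theta}$ issue, which is why the suffix decomposition is best dispensed with altogether.
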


For a proof of that fact, see \cite[\S 4]{JNS}. We are now ready
to conclude this section by the proof of Proposition \ref{Decay1}.
Pick $f\in C^{1}(I,V)$. We set $s=\sigma+it$ and we assume that
$\sigma$ is close to $\delta$. Let us write 
\[
\mathcal{S}_{\sigma,N}(t):=\int_{\Lambda(\Gamma)}\Vert\L_{s,\rho}^{N}(f)\Vert_{V}^{2}d\mu_{\delta}=\sum_{j=1}^{2r}\sum_{{\bf a},{\bf b}\in\mathscr{\W}_{N}^{j}}\int_{\Lambda(\Gamma)}e^{it\Phi_{{\bf a},{\bf b}}(x)}g_{{\bf a},{\bf b}}^{(j)}(x)d\mu_{\delta}(x),
\]
with 
\[
\Phi_{{\bf a},{\bf b}}(x)=\log\gamma_{{\bf a}}'(x)-\log\gamma_{{\bf b}}'(x),
\]
and 
\[
g_{{\bf a},{\bf b}}^{(j)}(x)=\begin{cases}
\left(\gamma'_{{\bf a}}(x)\right)^{\sigma}\left(\gamma'_{{\bf b}}(x)\right)^{\sigma}\langle\rho(\gamma_{{\bf a}}^{-1})f\circ\gamma_{{\bf a}}(x),\rho(\gamma_{{\bf b}}^{-1})f\circ\gamma_{{\bf b}}(x)\rangle_{V}\ \mathrm{if\ }x\in I_{j},\\
0\ \mathrm{otherwise.}
\end{cases}
\]
 Notice that $g_{{\bf a},{\bf b}}^{(j)}$ is indeed a $C^{1}$ function
on a neighborhood of $\Lambda(\Gamma)$. By using the bounded distortion
property and Cauchy-Schwarz we have easily:
\begin{equation}
\sup_{I_{j}}\vert g_{{\bf a},{\bf b}}^{(j)}\vert\leq C_{1}\sup_{I_{j}}\vert\gamma'_{{\bf a}}\vert^{\sigma}\sup_{I_{j}}\vert\gamma'_{{\bf b}}\vert^{\sigma}\Vert f\Vert_{(t),V}^{2}.\label{est1}
\end{equation}
Differentiating inside the inner product $\langle.,.\rangle_{V}$
and using the bounded distortion plus the uniform contraction (with
Cauchy-Schwarz again) gives also
\begin{equation}
\sup_{I_{j}}\left\vert \frac{d}{dx}g_{{\bf a},{\bf b}}^{(j)}\right\vert \leq C_{2}\sup_{I_{j}}\vert\gamma'_{{\bf a}}\vert^{\sigma}\sup_{I_{j}}\vert\gamma'_{{\bf b}}\vert^{\sigma}(1+\vert t\vert\theta^{N})\Vert f\Vert_{(t),V}^{2}.\label{est2}
\end{equation}
Both estimates (\ref{est1}) and (\ref{est2}) can be combined to
yield
\begin{equation}
\Vert g_{{\bf a},{\bf b}}^{(j)}\Vert_{C^{1}}\leq\widetilde{C_{2}}\sup_{I_{j}}\vert\gamma'_{{\bf a}}\vert^{\sigma}\sup_{I_{j}}\vert\gamma'_{{\bf b}}\vert^{\sigma}(2+\vert t\vert\theta^{N})\Vert f\Vert_{(t),V}^{2}.\label{est3}
\end{equation}
We also observe that $\inf_{x\in I_{j}}\vert\Phi_{{\bf a},{\bf b}}'(x)\vert=\mathscr{D}({\bf a},{\bf b})$,
and that by using the bounded distortion for the second and third
derivatives we have for some uniform $C_{3}>0$, 
\[
\Vert\Phi_{{\bf a},{\bf b}}\Vert_{C^{2}}\leq C_{3}.
\]
The plan is now to split $\mathcal{S}_{\sigma,N}(t)$ as 
\[
\mathcal{S}_{\sigma,N}(t)=\mathcal{S}_{\sigma,N}^{(1)}(t)+\mathcal{S}_{\sigma,N}^{(2)}(t),
\]
with the ``near-diagonal'' sum
\[
\mathcal{S}_{\sigma,N}^{(1)}(t):=\sum_{j=1}^{2r}\sum_{\mathscr{D}({\bf a},{\bf b})\leq\epsilon}\int_{\Lambda(\Gamma)}e^{it\Phi_{{\bf a},{\bf b}}(x)}g_{{\bf a},{\bf b}}^{(j)}(x)d\mu_{\delta}(x),
\]
and the ``off-diagonal`` sum
\[
\mathcal{S}_{\sigma,N}^{(2)}(t):=\sum_{j=1}^{2r}\sum_{\mathscr{D}({\bf a},{\bf b})>\epsilon}\int_{\Lambda(\Gamma)}e^{it\Phi_{{\bf a},{\bf b}}(x)}g_{{\bf a},{\bf b}}^{(j)}(x)d\mu_{\delta}(x),
\]
with $\epsilon>0$. We now assume that $\sigma_{0}\leq\sigma\leq\delta$
and $N=[\kappa\log\vert t\vert]$, with $\epsilon=e^{-\eta N}$with
$0<\eta<\eta_{0}$. We \textit{fix} $\kappa$ large enough so that
$\vert t\vert\theta^{N}$stays uniformly bounded as $\vert t\vert\rightarrow\infty$,
and pick $\eta>0$ small enough such that $\epsilon=e^{-\eta[\kappa\log\vert t\vert]}>\vert t\vert^{^{-\beta_{1}}}$
, so that we can apply Theorem \ref{Decay2}. Combining estimate (\ref{est3})
with the pressure bound from Lemma \ref{lem:Pressure-estimate}, we
get
\[
\vert\mathcal{S}_{\sigma,N}^{(2)}(t)\vert\leq C\frac{\Vert f\Vert_{(t),V}^{2}}{\vert t\vert^{\beta_{2}}}e^{2NP(\sigma_{0})}.
\]
On the other hand we have
\[
\vert\mathcal{S}_{\sigma,N}^{(1)}(t)\vert\leq C\Vert f\Vert_{(t),V}^{2}\sum_{j}\sum_{{\bf a}\in\mathcal{W}_{N}^{j}}\sup_{I_{j}}\vert\gamma'_{{\bf {\bf a}}}\vert^{\sigma}\sum_{{\bf b}:\mathscr{D}({\bf a},{\bf b})<\epsilon}\sup_{I_{j}}\vert\gamma'_{{\bf b}}\vert^{\sigma},
\]
which by using Proposition \ref{UNI1} and the pressure estimate combined
with the uniform hyperbolicity (the lower bound) gives
\[
\vert\mathcal{S}_{\sigma,N}^{(1)}(t)\vert\leq C\Vert f\Vert_{(t),V}^{2}e^{NP(\sigma_{0})}\overline{\theta}^{N(\sigma_{0}-\delta)}\epsilon^{\delta}.
\]
because $P(\sigma_{0})\rightarrow0$ as $\sigma_{0}\rightarrow\delta$,
we can definitely pick $\sigma_{0}<\delta$ so that for all $\vert t\vert\geq1$,
we have
\[
\mathcal{\vert S}_{\sigma,N}(t)\vert\leq\vert\mathcal{S}_{\sigma,N}^{(1)}(t)\vert+\vert\mathcal{S}_{\sigma,N}^{(2)}(t)\vert\leq\widetilde{C}\Vert f\Vert_{(t),V}^{2}\vert t\vert^{-\widetilde{\beta}},
\]
for some uniform $\widetilde{C}>0$ and $\widetilde{\beta}>0$. This
ends the proof.

\bibliographystyle{alpha}
\bibliography{database}

\newpage{}

\noindent Michael Magee, \\
Department of Mathematical Sciences,\\
Durham University, \\
Lower Mountjoy, DH1 3LE Durham,\\
United Kingdom

\noindent \texttt{michael.r.magee@durham.ac.uk}\\

\noindent Frédéric Naud, \\
Institut de Mathématiques de Jussieu, \\
Sorbonne Université, \\
4 place de Jussieu, \\
75252 Paris Cedex 05, France\\
\texttt{frederic.naud@imj-prg.fr}
\end{document}